\newcommand{\din}{d^{\text{in}}}
\newcommand{\dout}{d^{\text{out}}}
\crefname{claim}{Claim}{Claims}
\newtheorem{corollary}{Corollary}
\newtheorem{proposition}{Proposition}
\newtheorem{lemma}{Lemma}
\newtheorem{conjecture}{Conjecture}
\newtheorem{claim}{Claim}
\def\st{{\rm s.t.}}
\def\R{{\mathbb R}}
\def\ie{{i.e.,} }
\def\S{{\mathcal S}}
\def\H{{\mathcal H}}
\def\N{{\mathcal N}}
\def\B{{\mathcal B}}
\def\01{\ensuremath{0\mathord{-}1}}
\newcommand{\Tr}{\text{Tr}}
\newcommand{\norm}[1]{\Big\lVert#1\Big\rVert}
\renewenvironment{claim}[1][]
{\refstepcounter{claim} \begin{trivlist} \item[] {\bf Claim~\theclaim}\space#1 \itshape}
{\end{trivlist}}
\newenvironment{cpf}
{\begin{trivlist} \item[] {\em Proof of claim }}
{$\hfill\diamond$ \end{trivlist}}
\def\Xint#1{\mathchoice
{\XXint\displaystyle\textstyle{#1}}%
{\XXint\textstyle\scriptstyle{#1}}%
{\XXint\scriptstyle\scriptscriptstyle{#1}}%
{\XXint\scriptscriptstyle\scriptscriptstyle{#1}}%
\!\int}
\def\XXint#1#2#3{{\setbox0=\hbox{$#1{#2#3}{\int}$ }
\vcenter{\hbox{$#2#3$ }}\kern-.6\wd0}}
\def\dashint{\Xint-}
\def\Xsum#1{\mathchoice
{\XXsum\displaystyle\textstyle{#1}}%
{\XXsum\textstyle\scriptstyle{#1}}%
{\XXsum\scriptstyle\scriptscriptstyle{#1}}%
{\XXsum\scriptscriptstyle\scriptscriptstyle{#1}}%
\!\sum}
\def\XXsum#1#2#3{{\setbox0=\hbox{$#1{#2#3}{\sum}$ }
\vcenter{\hbox{$#2#3$ }}\kern-.5\wd0}}
\def\dashsum{\Xsum-}
\DeclareMathOperator{\prob}{\mathbb P}
\DeclareMathOperator{\avg}{\mathbb E}
\title{On the power of linear programming for K-means clustering
\thanks{The authors were partially funded by AFOSR grant FA9550-23-1-0123.}}
\author{Antonio De Rosa
\thanks{Department of Decision Sciences and BIDSA, Bocconi University, Via R\"ontgen 1, 20136, Milano, MI, Italy. E-mail: {\tt  antonio.derosa@unibocconi.it}.
             }          
\and
Aida Khajavirad
\thanks{Department of Industrial and Systems Engineering, Lehigh University, Bethlehem, PA 18015, USA.
             E-mail: {\tt aida@lehigh.edu}.
             }
             \and
Yakun Wang
\thanks{Department of Industrial and Systems Engineering, Lehigh University, Bethlehem, PA 18015, USA.
             E-mail: {\tt yaw220@lehigh.edu}.
             }
}
\date{}
\begin{document}

\maketitle

\begin{abstract}
    In~\cite{AntoAida20}, the authors introduced a new polynomial-size linear programming (LP) relaxation for 
    K-means clustering. In this paper, we further investigate both theoretical and computational properties of this relaxation. As evident from our numerical experiments with both synthetic and real-world data sets, the proposed LP relaxation is almost always tight; \ie its optimal solution is feasible for the original nonconvex problem. To better understand this unexpected behavior, on the theoretical side, we focus on K-means clustering with two clusters, and we obtain sufficient conditions under which  a given partition of data is an optimal solution of the LP relaxation. We further analyze the sufficient conditions when the input is generated according to a popular stochastic model and obtain recovery guarantees for the LP relaxation. We conclude our theoretical study by constructing a family of inputs for which the LP relaxation is never tight. Denoting by $n$ the number of data points to be clustered, the LP relaxation contains $\Omega(n^3)$ inequalities making it impractical for large data sets.
    To address the scalability issue, by building upon a cutting-plane algorithm together with the GPU implementation of PDLP, a first-order method LP solver, we develop an efficient algorithm that solves the proposed LP and hence the K-means clustering problem, for up to $n \leq 4000$ data points. 
      
\end{abstract}

{\bf Key words.} \emph{K-means clustering; Linear programming relaxation; Tightness; Recovery guarantee; Cutting-plane algorithm; First-order methods.}

\section{Introduction}
\label{sec:intro}
Clustering data points into a small number of groups according to some similarity measure is a common task in unsupervised machine learning. K-means clustering, one of the oldest and most popular clustering techniques, partitions data points into clusters by minimizing the total squared distance between each data point and the corresponding cluster center.
Let $\{x^i\}_{i=1}^n$ denote a set of $n$ data points in $\R^m$, and denote by $K$ the number of desired clusters. Define a partition of $[n] := \{1,\ldots, n\}$ as a family $\{\Gamma_k\}_{k =1}^K$ of non-empty subsets of $[n]$ such that
$\Gamma_a \cap \Gamma_b = \emptyset$ for all $a\neq b \in [K]$ and $\cup_{k \in [K]} {\Gamma_k} = [n]$. Then K-means clustering can be formulated as a combinatorial optimization problem:
\begin{align}
\label{Kmeans1}
\min  \quad & \sum_{k=1}^K{\sum_{i \in \Gamma_k} { \norm{x^i - \frac{1}{|\Gamma_k|}\sum_{j \in \Gamma_k}{x^j}}_2^2}}\\
\text{s.t.} \quad & \{\Gamma_k\}_{k \in [K]} \; {\rm is \; a \; partition \; of \; [n]}.  \nonumber
\end{align}
It is well-known that Problem~\eqref{Kmeans1} is NP-hard even when there are only two clusters~\cite{Aloise09} or when the data points are in $\R^2$~\cite{MahNimVar09}. The most popular techniques for solving Problem~\eqref{Kmeans1} are heuristics such as Lloyd's algorithm~\cite{Lloyd82}, approximation algorithms~\cite{Kan02,FriRezSal19}, and convex relaxations~\cite{PenXia05,PenWei07,Awaetal15,MixVilWar17,IduMixPetVil17,LiLiLi20,AntoAida20}. The two prominent types of convex relaxations for K-means clustering are semi-definite programming (SDP) relaxations~\cite{PenWei07} and linear programming (LP) relaxations~\cite{AntoAida20}. The theoretical properties of SDP relaxations for K-means clustering have been thoroughly studied in the literature~\cite{Awaetal15,MixVilWar17,IduMixPetVil17,PraHan18,LiLiLi20}.
In this paper, we investigate the power of LP relaxations for K-means clustering. In the following, we present an alternative formulation for Problem~\eqref{Kmeans1}, which we use to construct our relaxations.

Consider a partition $\{\Gamma_k\}_{k =1}^K$ of $[n]$; let ${\bf 1}_{\Gamma_k}$, $k \in [K]$ be the indicator vector of
the $k$th cluster; \ie  the $i$th component of ${\bf 1}_{\Gamma_k}$ is defined as: $({\bf 1}_{\Gamma_k})_i = 1$ if $i \in \Gamma_k$ and $({\bf 1}_{\Gamma_k})_i = 0$ otherwise. Define
the associated \emph{partition matrix} as:
\begin{equation}\label{pm}
X = \sum_{k=1}^K{\frac{1}{|\Gamma_k|}{\bf 1}_{\Gamma_k}{\bf 1}^T_{\Gamma_k}}.
\end{equation}
Define $d_{ij} := ||x^i - x^j||_2^2$ for all $i,j \in [n]$.
Then Problem~\eqref{Kmeans1} can be equivalently written as (see~\cite{LiLiLi20} for detailed derivation):
\begin{align}
\label{Kmeans2}
\min \quad & \sum_{i,j \in [n]} {d_{ij} X_{ij}}\\
\text{s.t.} \quad & X \; {\rm is \; a \; partition\; matrix \; defined \; by~\eqref{pm}}.  \nonumber
\end{align}
It can be checked that any partition matrix $X$ is positive semidefinite.
Using this observation, one can obtain the following SDP relaxation of Problem~\eqref{Kmeans2}:
\begin{align}\label{SDP}
\tag{PW}
\min \quad & \sum_{i,j \in [n]} {d_{ij} X_{ij}}\\
\text{s.t.} \quad &  {\rm Tr}(X) = K,\quad \sum_{j=1}^n X_{ij} = 1, \quad \forall i \in [n],\nonumber\\
                 & X \succeq 0,\quad X_{ij} \geq 0,\quad X_{ij} = X_{ji}, \quad  \forall 1 \leq i < j \leq n \nonumber,
\end{align}
where ${\rm Tr}(X)$ is the trace of the matrix $X$, and
$X \succeq 0$ means that $X$ is positive semidefinite. The above relaxation was first proposed in~\cite{PenWei07} and is often referred to as the ``Peng-Wei SDP relaxation''. Both theoretical and numerical properties of this relaxation have been thoroughly investigated in the literature~\cite{Awaetal15,MixVilWar17,IduMixPetVil17,LiLiLi20,PicSudWie22}.
 
\subsection{LP relaxations for K-means clustering}
In~\cite{AntoAida20}, the authors introduced the \emph{ratio-cut polytope}, defined as the convex hull of ratio-cut vectors corresponding to all partitions of $n$ points in $\R^m$ into at most $K$ clusters. They showed that this polytope is closely related to the convex hull of the feasible region of Problem~\eqref{Kmeans2}. The authors then studied the facial structure of the ratio-cut polytope, which in turn enabled them to obtain a new family of LP relaxations for K-means clustering. Fix a parameter $t \in \{2,\ldots, K\}$;
then an LP relaxation for K-means clustering is given by:
\begin{align}
\label{lp:pK}
\tag{LPK$_t$}
\min  \quad  & \sum_{i,j \in [n]} {d_{ij} X_{ij}}\nonumber\\
\text{s.t.} \quad  & \Tr(X) = K,\quad \sum_{j=1}^n X_{ij} = 1, \quad \forall i \in [n],\nonumber\\
& \sum_{j \in S} {X_{ij}} \leq X_{ii} + \sum_{j, k \in S: j < k} {X_{jk}}, \quad \forall i \in [n], \; \forall S \subseteq [n] \setminus\{i\}: 2 \leq |S| \leq t,\label{eq3k}\\
& X_{ij} \geq 0, \quad X_{ij}=X_{ji}, \quad \forall 1 \leq i < j \leq n \nonumber.
\end{align}
%
By Proposition~3 of~\cite{AntoAida20}, inequalities~\eqref{eq3k} define facets of the convex hull of the feasible region of Problem~\eqref{Kmeans2}. It then follows that for any $t < t'$, the feasible region of Problem~(LPK$_{t'}$) is strictly contained in the feasible region of Problem~(LPK$_t$). However, it is important to note that Problem~\eqref{lp:pK} contains $\Theta(n^{t+1})$ inequalities; moreover, an inequality of
the form~\eqref{eq3k} with $|S| = t'$ contains $\frac{t'(t'+1)}{2}+1$ variables with nonzero coefficients. That is, while increasing $t$ improves the quality of the relaxation, it significantly increases the computational cost of solving the LP as well. Hence, a careful selection of $t$ is of crucial importance; we address this question in Section~\ref{sec:numerics}.
In fact, the numerical experiments on synthetic data in~\cite{AntoAida20}
indicate that Problem~\eqref{lp:pK} with $t=2$ is almost always \emph{tight}; \ie the optimal solution of the LP is a partition matrix.
To better understand this unexpected behavior, in this paper, we perform a theoretical study of the tightness of the \eqref{lp:pK} relaxation. As a first step, we consider the case of two clusters. Performing a similar type of analysis for $K >2$ is a topic of future research.

\vspace{0.1in}
Our main contributions are summarized as follows:
\begin{itemize}[leftmargin=*]
    \item [(i)] Consider any partition of $[n]$ and the associated partition matrix $X$
    defined by~\eqref{pm}. By constructing a dual certificate, we obtain a sufficient condition, often referred to as a \emph{proximity condition}, under which $X$ is the unique optimal solution of Problem~\eqref{lp:pK} (see Proposition~\ref{Th: optimality} and Proposition~\ref{th: uniqueness}).
    This result can be considered as a generalization of Theorem~1 in~\cite{AntoAida20} where the optimality of \emph{equal-size} clusters is studied.
    Our proximity condition is overly conservative since,
    to obtain an explicit condition, we fix a subset of the dual variables to zero. To address this, we propose a simple algorithm to carefully assign values to dual variables previously set to zero, leading to a significantly better dual certificate for $X$ (see Proposition~\ref{prop:alg}).

    \item [(ii)] Consider a generative model, referred to as the \emph{stochastic sphere model} (SSM), in which there are two clusters of possibly different size in $\R^m$, and the data points in each cluster are sampled from a uniform distribution on a sphere of unit radius. Using the result of part~(i) we obtain a sufficient condition in terms of the distance between sphere centers under which the \eqref{lp:pK} relaxation recovers the planted clusters with high probability. By high probability we mean the probability tending to one as the number of data points tends to infinity (see Proposition~\ref{recovery}).


    \item [(iii)] Since in all our numerical experiments  Problem~\eqref{lp:pK} is tight, it is natural to ask whether the \eqref{lp:pK} relaxation is tight with high probability under reasonable generative models. We present a family of inputs for which the \eqref{lp:pK} relaxation is never tight (see Proposition~\ref{non-tightness} and Corollary~\ref{rem-tight}).

    \item [(iv)] We propose a scalable cutting plane algorithm, which relies on an efficient separation algorithm for inequalities~\eqref{eq3k}, lower bounding and upper bounding techniques, and a GPU implementation of PDLP, a first-order primal-dual LP solver. Thanks to this algorithm, we are able to solve Problem~\eqref{lp:pK} for real-world instances with up to $4000$ data points in less than two and half hours. Surprisingly, for \emph{all} instances, by solving the LP relaxation, we solve the original K-means clustering problem to \emph{global optimality}.

\end{itemize}

\subsection{Organization}
The remainder of the paper is structured as follows. In Section~\ref{sec:motivate}, we perform a theoretical study of the strength of the LP relaxation for two clusters.
In Section~\ref{sec:negative}, we present a family of inputs for which the optimal value of Problem~\eqref{lp:pK} is strictly smaller than the optimal value of the K-means clustering problem. In Section~\ref{sec:numerics} we develop a customized algorithm for solving the LP relaxation and present extensive numerical experiments. Finally, in Section~\ref{sec:appendix} we present the proof of a technical lemma that is omitted from Section~\ref{sec:motivate}.

\section{The tightness of the LP relaxation for two clusters}
\label{sec:motivate}

In this section, we consider the K-means clustering problem with $K=2$ and examine the strength of the LP relaxation.  Recall that for $K=2$, we have $t=2$; thus, Problem~\eqref{lp:pK}  simplifies to:
\begin{align}
\tag{LP2}
\label{lp:pK2}
\min  \quad & \sum_{i,j \in [n]} {d_{ij} X_{ij}} \nonumber\\
\text{s.t.} \quad & \Tr(X) = 2, \label{e1k2}\\
& \sum_{j=1}^n X_{ij} = 1, \quad \forall 1\leq i\leq n, \label{e2k2}\\
& X_{ij}+X_{ik} \leq X_{ii} + X_{jk}, \quad \forall i \neq j \neq k \in [n], \; j < k, \label{e3k2}\\
& X_{ij} \geq 0, \quad \forall 1 \leq i < j \leq n, \label{e4k2}
\end{align}
where as before we set $X_{ji} = X_{ij}$ for all $1 \leq i < j \leq n$. Throughout this section, whenever we say ``the LP relaxation,'' we mean the LP defined by Problem~\eqref{lp:pK2} and whenever we say ``the SDP relaxation,'' we mean the SDP defined by Problem~\eqref{SDP}.
To motivate our theoretical study, in the following we present a simple numerical experiment to convey the power of the LP relaxation for clustering real-world data sets. Solving both LP and SDP becomes computationally expensive as we increase the number of points $n$; that is, to solve an instance with $n \gtrsim 200$ efficiently, one needs to design a specialized algorithm for both LP and SDP.
Indeed, in~\cite{PicSudWie22} the authors design a specialized algorithm for the SDP relaxation and solve problems with $n \leq 4000$. In Section~\ref{sec:numerics}, we present a customized algorithm to solve the LP relaxation. In this section, however, we are interested in performing a theoretical analysis of the tightness of the LP and we are using our numerical experiments to motivate this study. Hence, we limit ourselves to solving problem instances with $n \leq 200$ variables. 

We say that a convex relaxation is tight if its optimal solution is a partition matrix. In~\cite{PenXia05}, the authors prove that a symmetric matrix $X$ with unit row-sums and trace $K$ is a partition matrix if and only if it is a \emph{projection} matrix; \ie $X^T X = X$. Hence to check whether the solution of the LP relaxation or the SDP relaxation is tight, we check whether it is a projection matrix. 
We collected a set of 15 data sets from the UCI machine learning repository~\cite{dua2017uci}. The data sets and their characteristics, \ie the number of points $n$ and the input dimension $m$ are listed in columns 1-3 of Table~\ref{table1}. For data sets with $n \leq 200$ (\ie Voice, Iris, and Wine), we solve the clustering problem over the entire data set. For each of the remaining data sets, to control the computational cost of both LP and SDP, we sample $10$ times $n'=200$ data points and solve LP and SDP using the $10$ random instances. All experiments are performed on the {\tt NEOS} server~\cite{neos98};
LPs are solved with {\tt GAMS/Gurobi}~\cite{gurobi} and SDPs are solved with {\tt GAMS/MOSEK}~\cite{mosek}.
The tightness rate of the LP ($t_{\rm LP}$), the tightness rate of the SDP ($t_{\rm SDP}$), and the (average) relative gap between the optimal values of the SDP and the LP are listed in columns 4-6 of Table~\ref{table1}. In \emph{all} instances, the LP relaxation is tight, while the SDP is tight only in a few instances.  
The remarkable performance of the LP relaxation is indeed unexpected, and hence in this paper, it is our goal to better understand this behavior through a theoretical study. 

\begin{table}
\centering
\caption{Comparing the strength of the LP  versus the SDP for clustering real-world data with $K=2$.}
\small
\begin{tabular}{l|l|l|l|l|l}
\toprule														
Data set	& $n$ & $m$ &$t_{\rm LP}$  &  $t_{\rm SDP}$ &   $g_{\rm rel}$ (\%)\\
\midrule	
Voice   & 126 & 310 &1.0 &   0.0   & 5.40\\
Iris   & 150  & 4 & 1.0  &   0.0   & 1.09 \\
Wine    & 178  & 13  &1.0 &   0.0    & 3.44 \\
Seeds    &  210  & 7  &1.0   &   0.0  & 3.80\\
Accent &  329  &  12 & 1.0   &   0.3   & 0.02\\
ECG5000 &  500  &  140 &1.0      &  0.1  & 0.07      \\
Hungarian &  522 & 20  &1.0     &   0.0  &    1.01\\
Wdbc  &  569  & 30  &  1.0      &    0.0   &   5.51 \\
Strawberry & 613  & 235  & 1.0      &  0.0  &  5.40  \\
Energy    &  768 &  16 &  1.0     &  1.0   &  0.0    \\
SalesWeekly & 810 & 106 & 1.0     &  0.0  &   1.74  \\
Vehicle &  846 & 18  & 1.0       &   0.0  &   1.11 \\
Wafer &  1000 & 152  &  1.0        &  0.3   &   0.06 \\
Ethanol & 2000 &  27 &  1.0      &    1.0   &   0.0   \\
Rice & 3810  &  7 &   1.0     &     0.0   &   1.70\\
\bottomrule															
\end{tabular} \label{table1}
\end{table}

\subsection{A proximity condition for the LP relaxation}
\label{sec:proximity}

We first obtain a proximity condition for the LP relaxation given by Problem~\eqref{lp:pK2}.
A proximity condition for the SDP relaxation is presented in~\cite{LiLiLi20}.
As our proximity condition is overly conservative, we then present a simple algorithm that certifies the optimality of a given partition matrix.
To establish our proximity condition, in Proposition~\ref{Th: optimality}, we first obtain a sufficient condition under which a given partition matrix is an optimal solution of Problem~\eqref{lp:pK2}. Subsequently in Proposition~\ref{th: uniqueness}, we address the question of uniqueness of the optimal solution. The next proposition
can be considered as a generalization of Theorem~1 in~\cite{AntoAida20} where the authors study the optimality of \emph{equal-size} clusters. 

In the following, for every $A \subseteq [n]$ and $f:A\to \R$, we define
$$
\underset{i \in A}{\dashsum}{f(i)} := \frac 1{|A|}\sum_{i \in A}{f(i)}.
$$
Moreover, given a partition $\{\Gamma_1, \Gamma_2\}$ of $[n]$,
for each $i \in \Gamma_l$, $l \in \{1,2\}$,  we define 
$$\din_i := \underset{j\in \Gamma_l}{\dashsum}{d_{ij}}, \qquad \dout_i := \underset{j \in [n]\setminus \Gamma_l}{\dashsum}{d_{ij}}.$$

We are now ready to state our proximity condition.

\begin{proposition}\label{Th: optimality}
    Let $\{\Gamma_1, \Gamma_2\}$ denote a partition of $[n]$ and assume without loss of generality that $|\Gamma_1| \leq |\Gamma_2|$.
    Define
    \begin{equation}\label{rdef}
    r_1 := \frac{2|\Gamma_1|}{|\Gamma_1|+|\Gamma_2|}, \quad r_2 := \frac{2|\Gamma_2|}{|\Gamma_1|+|\Gamma_2|},
    \end{equation}
    and
    \begin{equation}\label{etadef}
    \eta := \frac{r_2}{2}\Bigg(\Big(1-\frac{r_1}{r_2}\Big) \max_{k \in \Gamma_1}\din_k +\Big(1-\frac{r_2}{r_1}\Big) \min_{k\in \Gamma_2}\din_k+ \frac{r_1}{r_2} \underset{k \in \Gamma_1}{\dashsum}{\din_k}+\frac{r_2}{r_1} 
   \underset{k \in \Gamma_2}{\dashsum}{\din_k}\Bigg).
    \end{equation}
    Suppose that
\begin{equation}\label{ass1}
\underset{k \in \Gamma_2}{\dashsum}{\min\{r_2 d_{ik} + \din_j , r_2 d_{jk} + \din_i\}} -d_{ij} \geq \eta, \quad \forall i < j \in \Gamma_1, 
\end{equation}    
and
\begin{equation}\label{ass2}
\underset{k \in \Gamma_1}{\dashsum}{\min\{r_1 d_{ik} + \din_j , r_1 d_{jk} + \din_i\}} -d_{ij} \geq \frac{r_1}{r_2}\eta, \quad \forall i < j \in \Gamma_2.
\end{equation}    
    Then, an optimal solution of Problem~\eqref{lp:pK2} is given by the partition matrix
    \begin{equation}\label{optsol}
    \bar X=\frac{1}{|\Gamma_1|}{\bf 1}_{\Gamma_1}{\bf 1}^T_{\Gamma_1}+\frac{1}{|\Gamma_2|}{\bf 1}_{\Gamma_2}{\bf 1}^T_{\Gamma_2}.
    \end{equation}
\end{proposition}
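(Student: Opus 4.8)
The plan is to prove optimality of $\bar X$ by exhibiting a dual certificate: a feasible solution of the LP dual of~\eqref{lp:pK2} that satisfies complementary slackness with $\bar X$, which then forces both to be optimal. First one checks that $\bar X$ is primal feasible, which is immediate: $\Tr(\bar X)=2$, the row sums equal $1$, nonnegativity holds, and each inequality~\eqref{e3k2} $X_{ij}+X_{ik}\le X_{ii}+X_{jk}$ is satisfied — in fact it is tight unless the ``apex'' $i$ lies in a cluster different from both $j$ and $k$ (equivalently $j,k$ are in one cluster and $i$ in the other), in which case the left side is $0$ and the right side is $\frac{1}{|\Gamma_{l(i)}|}+\frac{1}{|\Gamma_{l(j)}|}>0$, so the inequality is strict. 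Introduce dual multipliers $\mu$ for the trace equation, $\lambda_i$ for the row-sum equations, $\beta$-multipliers $\ge 0$ for the inequalities~\eqref{e3k2}, and $\gamma_{ij}\ge 0$ for the nonnegativity constraints. Complementary slackness then forces $\gamma_{ij}=0$ whenever $i,j$ lie in the same cluster, and forces the $\beta$-multiplier of every ``lonely-apex'' triple to vanish.

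Next I would impose the conservative simplification announced before the statement: among the still-available $\beta$-multipliers, keep only those indexed by a triple having two indices in one cluster and one in the other, with the apex among the majority pair; all others are set to zero. These surviving $\beta$'s become the free parameters of the construction. Writing out the dual \emph{equality} constraints — one per diagonal entry $X_{ii}$ and one per same-cluster off-diagonal entry $X_{ij}$ — gives a linear system that determines $\mu$ and the $\lambda_i$ and relates the surviving $\beta$'s to the data; here the averaged in-cluster distances $\din_i$ appear naturally (each index' total $\beta$-weight turns out to be governed by $\din_i$), the cluster-size ratios $r_1,r_2$ of~\eqref{rdef} enter through counting $|\Gamma_1|$ versus $|\Gamma_2|$, and the scalar $\eta$ of~\eqref{etadef} emerges as the normalizing constant tying together the two clusters' multipliers. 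Substituting these values into the dual equality constraint for each cross pair $\{i,j\}$ then yields an explicit formula for $\gamma_{ij}$.

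It remains to verify the inequality part of dual feasibility: nonnegativity of the kept $\beta$'s and of all $\gamma_{ij}$. For a pair $i<j$ in $\Gamma_1$ and a point $k\in\Gamma_2$ we have the freedom to load the required multiplier onto the constraint~\eqref{e3k2} with apex $i$ or with apex $j$; choosing, for each $k$, whichever option is more favorable produces exactly the term $\min\{r_2 d_{ik}+\din_j,\ r_2 d_{jk}+\din_i\}$, and averaging over $k\in\Gamma_2$ and demanding nonnegativity collapses to hypothesis~\eqref{ass1}; the symmetric argument over $\Gamma_2$ gives~\eqref{ass2}, with the right-hand side scaled by $r_1/r_2$ because of the cluster-size asymmetry. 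Once both~\eqref{ass1} and~\eqref{ass2} hold, the constructed dual point is feasible and complementary slackness with the feasible $\bar X$ holds, so $\bar X$ is an optimal solution of~\eqref{lp:pK2}.

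The main obstacle is the bookkeeping rather than any single deep idea: correctly classifying which of the $\Theta(n^3)$ inequalities~\eqref{eq3k} are active at $\bar X$, organizing the resulting large system of dual equalities, and checking that after eliminating $\mu$ and the $\lambda_i$ the surviving nonnegativity requirements reduce \emph{precisely} to~\eqref{ass1}–\eqref{ass2} with the stated $\eta$. I expect the care needed is in the choice of which multipliers to zero out (too aggressive a choice makes the system infeasible, too timid makes it unsolvable in closed form) and in tracking the $r_1,r_2$ factors that distinguish the smaller cluster $\Gamma_1$ from $\Gamma_2$; uniqueness of the optimum is deferred to Proposition~\ref{th: uniqueness}.
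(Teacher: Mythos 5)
Your proposal is correct and follows essentially the same route as the paper's proof: a dual certificate verified via complementary slackness, with the multipliers of the strictly-satisfied ``lonely-apex'' triangle inequalities forced to zero, the within-cluster triple multipliers set to zero by choice, and the $\min\{r_2 d_{ik}+\din_j,\,r_2 d_{jk}+\din_i\}$ term arising exactly as you describe, from loading weight on whichever of the two admissible apex choices is cheaper. One minor imprecision: in the paper's bookkeeping the nonnegativity of the cross-pair slack multipliers is secured by the definition of $\eta$ alone, whereas hypotheses~\eqref{ass1}--\eqref{ass2} are what make the within-cluster \emph{equality} constraints solvable with nonnegative triangle multipliers; this does not affect the validity of your plan.
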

\begin{proof}
 We start by constructing the dual of Problem~\eqref{lp:pK2}. Define dual variables $\omega$ associated with~\eqref{e1k2}, $\mu_i$ for all $i \in [n]$ associated  with~\eqref{e2k2},
$\lambda_{ijk}$ for all $(i,j,k) \in \Omega := \{(i,j,k): i \neq j \neq k \in [n], \; j < k\}$ associated  with~\eqref{e3k2}, and $\sigma_{ij}$ for all $1 \leq i < j \leq n$ associated with~\eqref{e4k2}. The dual of Problem~\eqref{lp:pK2} is then given by:
\begin{align}
\max \quad  & -(2\omega + \sum_{i \in [n]}{ \mu_i}) \nonumber\\
\text{s.t.} \quad & \mu_i + \mu_j + \sum_{k \in [n] \setminus \{i,j\}} {(\lambda_{ijk} + \lambda_{jik}-\lambda_{kij})} + 2 d_{ij} - \sigma_{ij}= 0, \;
\forall 1 \leq i < j \leq n, \label{d1}\\
&\omega + \mu_i -\sum_{j,k \in [n] \setminus \{i\}: j < k}{\lambda_{ijk}} = 0, \; \forall i \in [n],\label{d2}\\
& \lambda_{ijk} \geq 0, \quad \forall (i,j,k) \in \Omega, \quad \sigma_{ij} \geq 0, \quad \forall 1 \leq i < j \leq n, \nonumber
\end{align}
where we let $\lambda_{ikj} =\lambda_{ijk}$ for all $(i,j,k) \in \Omega$. 
To establish the optimality of $\bar X$ defined by~\eqref{optsol}, it suffices to construct a dual feasible point $(\bar \lambda, \bar \mu, \bar \omega, \bar \sigma)$ that together with $\bar X$ satisfies the complementary slackness. 
Without loss of generality, assume that $i < j$ for all $i \in \Gamma_1$ and $j \in \Gamma_2$.  Then $\bar X$ and $(\bar \lambda, \bar \mu, \bar \omega, \bar \sigma)$ satisfy the complementary slackness if and only if:
\begin{itemize}
\item [(i)] $\bar\lambda_{ijk} = 0$ if $i \in \Gamma_1$ and $j,k \in \Gamma_2$ or if $i \in \Gamma_2$ and $j,k \in \Gamma_1$, 
\item [(ii)] $\bar \sigma_{ij} = 0$ if $i,j \in \Gamma_1$ or $i, j \in \Gamma_2$. 
\end{itemize}
After projecting out $\bar \sigma_{ij}$ for $i \in \Gamma_1$ and $j \in \Gamma_2$, we deduce that it suffices to find $(\bar \lambda, \bar \mu, \bar \omega)$ satisfying:
\begin{align}
    & \bar \mu_i + \bar \mu_j + \sum_{k \notin \Gamma_l} {(\bar\lambda_{ijk} + \bar\lambda_{jik})}
+ \sum_{k \in \Gamma_l \setminus \{i,j\}} {(\bar\lambda_{ijk} + \bar\lambda_{jik}-\bar\lambda_{kij})}
+ 2 d_{ij} = 0, \quad \forall i < j \in \Gamma_l, l \in \{1,2\}\label{c1}\\
&\bar\mu_i + \bar\mu_j + \sum_{k \in \Gamma_1 \setminus \{i\}} {(\bar\lambda_{ijk}-\bar\lambda_{kij})} + \sum_{k \in \Gamma_2 \setminus \{j\}} {(\bar\lambda_{jik}-\bar\lambda_{kij})} + 2 d_{ij} \geq 0, \quad \forall i \in \Gamma_1, j \in \Gamma_2 \label{c2}\\
&\bar\omega + \bar\mu_i -\sum_{\substack{j \in \Gamma_l \setminus \{i\},\\ k \notin \Gamma_l}}{\bar\lambda_{ijk}} -\sum_{j <k \in \Gamma_l \setminus \{i\}}{\bar\lambda_{ijk}}= 0, \quad \forall i \in \Gamma_l, l \in \{1,2\}\label{c3}. 
\end{align}
To this end, let 
\begin{align}
&\bar \lambda_{ijk} - \bar \lambda_{jik}= \frac{d_{jk}-d_{ik}}{n/2}+\frac{\din_{i}-\din_{j}}{|\Gamma_2|}, \quad \forall i, j \in \Gamma_1, k \in \Gamma_2 \label{g1}\\
&\bar \lambda_{ijk} - \bar \lambda_{jik}= \frac{d_{jk}-d_{ik}}{n/2}+\frac{\din_{i}-\din_{j}}{|\Gamma_1|}, \quad \forall i, j \in \Gamma_2, k \in \Gamma_1 \label{g2}\\
    & \bar \mu_i = -\din_i - r_2 \dout_i + \eta, \quad \forall i \in \Gamma_1 \label{g3}\\
    & \bar \mu_i = -\din_i - r_1 \dout_i +\frac{r_1}{r_2}\eta, \quad \forall i \in \Gamma_2 \label{g4}\\
    & \bar \omega = -\frac{1}{2}\sum_{i \in [n]} {(\din_i+\bar \mu_i)},\label{g5}
\end{align}
where $r_1, r_2$ and $\eta$ are as defined by~\eqref{rdef} and~\eqref{etadef}, respectively.

First let us examine the validity of inequalities~\eqref{c2}. 
Substituting~\eqref{g1}-\eqref{g4} in~\eqref{c2} yields:
\begin{align*}
   & -\din_i - r_2 \dout_i + \eta -\din_j - r_1 \dout_j +\frac{r_1}{r_2}\eta
    + \sum_{k \in \Gamma_1 \setminus \{i\}}{\Big(\frac{d_{jk}-d_{ij}}{n/2}+\frac{\din_{i}-\din_{k}}{|\Gamma_2|}\Big)}\\
   & + \sum_{k \in \Gamma_2 \setminus \{j\}}{\Big(\frac{d_{ik}-d_{ij}}{n/2}+\frac{\din_{j}-\din_{k}}{|\Gamma_1|}\Big)} + 2 d_{ij} = \\
   & -\din_i - r_2 \dout_i + \eta -\din_j - r_1 \dout_j +\frac{r_1}{r_2}\eta + r_1 \dout_j-r_1 d_{ij}+ \frac{r_1}{r_2} \din_i - \frac{r_1}{r_2} \underset{k \in \Gamma_1}{\dashsum}{\din_k}+r_2 \dout_i-r_2 d_{ij}+\\
   &\frac{r_2}{r_1} \din_j-\frac{r_2}{r_1} 
   \underset{k \in \Gamma_2}{\dashsum}{\din_k}+2 d_{ij} =\\
   &\Big(1+\frac{r_1}{r_2}\Big)\eta - \Big(1-\frac{r_1}{r_2}\Big) \din_i -\Big(1-\frac{r_2}{r_1}\Big) \din_j- \frac{r_1}{r_2} \underset{k \in \Gamma_1}{\dashsum}{\din_k}-\frac{r_2}{r_1} 
   \underset{k \in \Gamma_2}{\dashsum}{\din_k} \geq 0,
\end{align*}
where the last inequality follows from the definition of $\eta$ given by~\eqref{etadef} and the identity $r_1 + r_2 = 2$.

We now show that equalities~\eqref{c3} are implied by equalities~\eqref{c1}.
In the following we prove that all equalities of the form~\eqref{c3} with $l=1$ are implied by equalities of the form~\eqref{c1} with $l=1$; the proof of the case with $l=2$ follows from a similar line of arguments.
Using~\eqref{g1} to eliminate $\bar \lambda_{jik}$
for $i,j \in \Gamma_1$ and $k \in \Gamma_2$, and using~\eqref{g3} to eliminate $\bar \mu_i$, $i \in \Gamma_1$, it follows that equalities~\eqref{c1} with $l=1$ can be equivalently written as:
\begin{equation}\label{f1}
   \sum_{k \in \Gamma_2} {\bar\lambda_{ijk}}
 + \frac{1}{2}\sum_{k \in \Gamma_1 \setminus \{i,j\}} {(\bar\lambda_{ijk} + \bar\lambda_{jik}-\bar\lambda_{kij})}
 = \din_i + r_2 \dout_j-d_{ij}  -\eta, \quad \forall i < j \in \Gamma_1.
\end{equation}
Using~\eqref{g3}-\eqref{g5}, inequalities~\eqref{c3} can be written as:
\begin{equation}\label{f2}
    \sum_{\substack{j \in \Gamma_1 \setminus \{i\},\\ k \in \Gamma_2}}{\bar\lambda_{ijk}} +\sum_{j <k \in \Gamma_1 \setminus \{i\}}{\bar\lambda_{ijk}}= r_2 \sum_{j \in \Gamma_1}{\dout_j}-\din_i - r_2 \dout_i -(|\Gamma_1|-1) \eta , \quad \forall i \in \Gamma_1,
\end{equation}
where we used the identity $r_2 \sum_{j \in \Gamma_1}{\dout_i}=r_1 \sum_{j \in \Gamma_2}{\dout_i}$.
Moreover, it can be checked that
$$\sum_{j \neq k \in \Gamma_1 \setminus \{i\}} {(\bar\lambda_{ijk} + \bar\lambda_{jik}-\bar\lambda_{kij})} = 2 \sum_{j <k \in \Gamma_1 \setminus \{i\}}{\bar\lambda_{ijk}}.$$
Therefore, to show that equalities~\eqref{f2} are implied by equalities~\eqref{f1}, it suffices to have: 
$$
\sum_{j \in \Gamma_1 \setminus \{i\}}{(\din_i + r_2 \dout_j-d_{ij}  -\eta)} =
r_2 \sum_{j \in \Gamma_1}{\dout_j}-\din_i - r_2 \dout_i -(|\Gamma_1|-1) \eta,
$$
whose validity follows since $\sum_{j \in \Gamma_1 \setminus \{i\}}{(\din_i -d_{ij})} = (|\Gamma_1|-1) \din_i - |\Gamma_1| \din_i = \din_i$.

Therefore, it remains to prove the validity of equalities~\eqref{c1}. 
First consider the case with $l=1$. By~\eqref{g1} and nonnegativity of $\bar \lambda_{ijk}$ for all $(i,j,k)\in \Omega$, we deduce that
$$
\bar \lambda_{ijk} + \bar \lambda_{jik} \geq  {\rm abs}\left(\frac{d_{jk}-d_{ik}}{n/2}+\frac{\din_{i}-\din_{j}}{|\Gamma_2|}\right)
=\frac{1}{|\Gamma_2|}{\rm abs}\Big((\din_i+r_2 d_{jk})-(\din_j+r_2 d_{ik})\Big), \quad \forall i, j \in \Gamma_1, k \in \Gamma_2, 
$$
where ${\rm abs}(\cdot)$ denote the absolute value function. Hence, using~\eqref{g3} to eliminate $\bar \mu_i$, we conclude that equalities~\eqref{c1} with $l=1$ can be satisfied if
\begin{equation}\label{s1}
\frac{1}{2}\sum_{k \in \Gamma_1 \setminus \{i,j\}} {(\bar\lambda_{ijk} + \bar\lambda_{jik}-\bar\lambda_{kij})} \leq   \underset{k \in \Gamma_2}{\dashsum}{\min\{r_2 d_{ik} + \din_j , r_2 d_{jk} + \din_i\}} -d_{ij} - \eta, \quad \forall i < j \in \Gamma_1,
\end{equation}
where we used the identity ${\rm abs}(a-b) = a+b - 2 \min\{a, b\}$.
Letting $\bar \lambda_{ijk} = 0$ for all $(i,j,k) \in \Gamma_1$ and using~\eqref{ass1}, we conclude that inequalities~\eqref{s1} are valid. Similarly, it can be checked that inequalities~\eqref{c2} with $l=2$ can be satisfied if 
\begin{equation}\label{s2}
\frac{1}{2}\sum_{k \in \Gamma_2 \setminus \{i,j\}} {(\bar\lambda_{ijk} + \bar\lambda_{jik}-\bar\lambda_{kij})} \leq   \underset{k \in \Gamma_1}{\dashsum}{\min\{r_1 d_{ik} + \din_j , r_1 d_{jk} + \din_i\}} -d_{ij} - \frac{r_1}{r_2}\eta, \quad \forall i < j \in \Gamma_2.
\end{equation}
Letting $\bar \lambda_{ijk} = 0$ for all $(i,j,k) \in \Gamma_2$ and using~\eqref{ass2}, we conclude that inequalities~\eqref{s2} are valid and this completes the proof.
\end{proof}

We now consider the question of uniqueness of the optimal solution. To this end, we make use of the following result:

\begin{proposition}[Part~(iv) of Theorem~2 in~\cite{Man79}]\label{mang}
Consider an LP whose feasible region is defined by $Ax =b$ and $C x \leq d$, where $x \in \R^n$ denotes the
vector of optimization variables, and $b,d, A, C$ are vectors and matrices of appropriate dimensions.
Let $\bar x$ be an optimal solution of this LP and denote by $\bar u$ the dual optimal solution corresponding to the inequality constraints.
Let $C_i$ denote
the $i$-th row of $C$. Define $Q =\{i : C_i \bar x = d_i, \; \bar u_i > 0\}$, $L =\{i : C_i \bar x = d_i, \; \bar u_i = 0\}$. Let $\mathscr{C}_Q$ and $\mathscr{C}_L$
be the
matrices whose rows are $C_i$, $i \in Q$ and $C_i$, $i \in L$, respectively.
Then $\bar x$ is the unique optimal solution of the LP, if there exists no $x$ different from the zero vector satisfying
\begin{equation}\label{uc}
Ax = 0, \quad \mathscr{C}_Q x = 0, \quad \mathscr{C}_L x \leq 0.
\end{equation}
\end{proposition}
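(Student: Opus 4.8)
The plan is to argue directly: assuming the homogeneous system \eqref{uc} has only the trivial solution, I would show that any optimal solution $\hat x$ must coincide with $\bar x$. The mechanism is to set $w := \hat x - \bar x$ and prove that $w$ itself satisfies \eqref{uc}; the hypothesis then forces $w = 0$. The whole proof is a verification that feasibility of both points yields the first and third relations of \eqref{uc}, while optimality (through the dual certificate) upgrades the middle relation from an inequality to the required equality $\mathscr{C}_Q w = 0$.

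First I would record the LP optimality conditions at $\bar x$ together with its optimal dual pair $(\bar v, \bar u)$, where $\bar v$ is the multiplier for $Ax = b$ and $\bar u \ge 0$ is the multiplier for $Cx \le d$. With the sign convention that makes $\bar u$ nonnegative, stationarity reads $c + A^T \bar v + C^T \bar u = 0$, and complementary slackness gives $\bar u_i = 0$ whenever $C_i \bar x < d_i$. In particular $\bar u_i = 0$ for every inactive index and for every $i \in L$, while $\bar u_i > 0$ precisely on $Q$; these sign facts are exactly what the final step will exploit.

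The first and third conditions are then immediate from feasibility alone. Since $\bar x$ and $\hat x$ both satisfy $Ax = b$, we get $Aw = 0$. For every active inequality, i.e. $i \in Q \cup L$, we have $C_i \hat x \le d_i = C_i \bar x$, so $C_i w \le 0$; restricting to the rows of $L$ yields $\mathscr{C}_L w \le 0$, and restricting to the rows of $Q$ gives the preliminary bound $\mathscr{C}_Q w \le 0$. What remains, and what I expect to be the crux of the argument, is to promote this last inequality to the equality $\mathscr{C}_Q w = 0$.

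This is the only place where optimality of $\hat x$, rather than mere feasibility, is used. Because both points are optimal, $c^T w = 0$; substituting $c = -A^T \bar v - C^T \bar u$ from stationarity and using $Aw = 0$ gives $0 = c^T w = -\,\bar u^T (Cw) = -\sum_i \bar u_i\,(C_i w)$. By complementary slackness and the definition of $L$, the only surviving summands are those with $i \in Q$, where $\bar u_i > 0$, and we already know $C_i w \le 0$ there; hence each term $\bar u_i (C_i w)$ is nonpositive while the whole sum vanishes, which forces $C_i w = 0$ for every $i \in Q$, that is $\mathscr{C}_Q w = 0$. Consequently $w$ solves \eqref{uc}, so $w = 0$ and $\hat x = \bar x$, proving uniqueness. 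The main care point is keeping the sign convention consistent so that $\bar u \ge 0$ aligns with the stationarity equation, since it is the term-by-term nonpositivity of $\bar u_i (C_i w)$ that drives the final equality.
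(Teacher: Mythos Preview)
Your argument is correct: you take an arbitrary optimal $\hat x$, set $w=\hat x-\bar x$, verify $Aw=0$ and $\mathscr{C}_L w\le 0$ from feasibility, and then use $c^T w=0$ together with stationarity and the strict positivity of $\bar u$ on $Q$ to force $\mathscr{C}_Q w=0$. This is exactly the standard proof of Mangasarian's uniqueness criterion.

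Note, however, that the paper does not supply its own proof of this proposition; it is quoted verbatim as Part~(iv) of Theorem~2 in~\cite{Man79} and used as a black box in the proof of Proposition~\ref{th: uniqueness}. So there is no ``paper's proof'' to compare against---you have simply filled in an argument that the authors delegate to the reference.
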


We are now ready to establish our uniqueness result:

\begin{proposition}\label{th: uniqueness}
Suppose that inequalities~\eqref{ass1} and~\eqref{ass2} are strictly satisfied. Then $\bar X$ defined by~\eqref{optsol} is the unique optimal solution of Problem~\eqref{lp:pK2}. 
\end{proposition}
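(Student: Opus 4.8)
The plan is to apply Proposition~\ref{mang} to Problem~\eqref{lp:pK2} with the optimal primal solution $\bar X$ of~\eqref{optsol} and the dual solution $(\bar\lambda,\bar\mu,\bar\omega,\bar\sigma)$ constructed in the proof of Proposition~\ref{Th: optimality}. First I would identify the constraint partition $Q$ and $L$ relative to this dual certificate. The equality constraints $Ax=0$ in~\eqref{uc} are the homogenized versions of~\eqref{e1k2} and~\eqref{e2k2}, namely $\Tr(X)=0$ and $\sum_j X_{ij}=0$ for all $i$. The inequality constraints split into the triangle-type inequalities~\eqref{e3k2} and the nonnegativity inequalities~\eqref{e4k2}. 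For~\eqref{e4k2}, the active ones at $\bar X$ are exactly those with $i,j$ in different clusters (since $\bar X_{ij}>0$ iff $i,j$ are in the same cluster), and by the complementary-slackness analysis in the proof of Proposition~\ref{Th: optimality} the corresponding dual multipliers $\bar\sigma_{ij}$ need not be zero — so those constraints go into $Q$ or $L$ depending on whether $\bar\sigma_{ij}>0$; to be safe one can place all of them in $L$, since $\mathscr C_L x\le 0$ is the weaker requirement, or better, argue that under the strict hypotheses they all lie in $Q$. For the triangle inequalities~\eqref{e3k2}, a direct computation shows which ones are tight at $\bar X$: writing $X_{ij}+X_{ik}\le X_{ii}+X_{jk}$, equality holds precisely when a certain configuration of $i,j,k$ among the two clusters occurs (e.g.\ all three in the same cluster, or $i$ alone on one side with $j,k$ together on the other). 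Among those tight triangle inequalities, the ones with $\bar\lambda_{ijk}>0$ form $Q$ and the ones with $\bar\lambda_{ijk}=0$ form $L$.

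Next I would translate the strictness of~\eqref{ass1} and~\eqref{ass2} into strict positivity of the relevant dual multipliers. Looking at equations~\eqref{s1} and~\eqref{s2}: we set $\bar\lambda_{ijk}=0$ for triples inside a single cluster, so the left-hand sides of~\eqref{s1} and~\eqref{s2} vanish, and then~\eqref{c1} forces the within-cluster multiplier sums $\sum_{k\in\Gamma_2}\bar\lambda_{ijk}$ (for $i<j\in\Gamma_1$) and $\sum_{k\in\Gamma_1}\bar\lambda_{ijk}$ (for $i<j\in\Gamma_2$) to equal the left-hand sides of~\eqref{ass1} and~\eqref{ass2} minus $d_{ij}$ minus $\eta$ (resp.\ $\tfrac{r_1}{r_2}\eta$). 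Strictness of~\eqref{ass1}, \eqref{ass2} then makes these sums strictly positive, which — together with the freedom in choosing how to split each $\bar\lambda_{ijk}=\tfrac12(\bar\lambda_{ijk}+\bar\lambda_{jik})+\tfrac12(\bar\lambda_{ijk}-\bar\lambda_{jik})$ via~\eqref{g1}–\eqref{g2} — lets us arrange $\bar\lambda_{ijk}>0$ for the cross-cluster triples $i,j$ in one part and $k$ in the other. This is the step where the hypothesis is actually used, and it pins down $Q$.

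Then the core of the argument is the linear-algebraic claim: the only $X$ (symmetric, with $X_{ij}=X_{ji}$) satisfying $\Tr(X)=0$, $\sum_j X_{ij}=0$ for all $i$, together with $\mathscr C_Q X=0$ and $\mathscr C_L X\le 0$, is $X=0$. The $\mathscr C_Q$ equations give: $X_{ij}=0$ whenever $i,j$ are in different clusters (from the active nonnegativity constraints, if we manage to put them in $Q$), and the tight triangle equalities $X_{ij}+X_{ik}=X_{ii}+X_{jk}$ for the configurations with $\bar\lambda>0$. From $\sum_j X_{ij}=0$ and $X_{ij}=0$ for cross pairs, the row sums restricted to each cluster vanish. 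The within-cluster triangle equalities (all three indices in $\Gamma_l$, which are the ones with $\bar\lambda_{ijk}=0$ hence in $L$, giving only $X_{ij}+X_{ik}-X_{ii}-X_{jk}\le 0$) must be combined with the cross-type tight equalities in $Q$ of the form (with $i\in\Gamma_1$, $j,k\in\Gamma_2$ or symmetric): $X_{ij}+X_{ik}=X_{ii}+X_{jk}$ — but since $X_{ij}=X_{ik}=0$ for cross pairs this reads $0=X_{ii}+X_{jk}$, i.e.\ $X_{jk}=-X_{ii}$ for all $j,k\in\Gamma_2$, $j\neq k$, and likewise $X_{ij'}=-X_{kk}$... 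I would show this forces all off-diagonal within-cluster entries equal to a common constant $c_l$ and all diagonal entries within $\Gamma_l$ equal to $-c_l$; then the vanishing cluster-row-sums give $(|\Gamma_l|-1)c_l-c_l=0$, wait — $-c_l+(|\Gamma_l|-1)c_l=(|\Gamma_l|-2)c_l=0$, and a separate tight triangle among three within-cluster points plus the $L$-inequalities rules out $c_l\neq0$ when $|\Gamma_l|=2$; and finally $\Tr(X)=-\sum_l|\Gamma_l|c_l=0$. Combined, $c_1=c_2=0$, so $X=0$.

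The main obstacle I anticipate is bookkeeping the exact set of tight triangle inequalities at $\bar X$ and correctly sorting them into $Q$ versus $L$ — in particular confirming that the constructed dual certificate can be taken with $\bar\lambda_{ijk}>0$ (strictly) on all the cross-cluster triples and $\bar\sigma_{ij}>0$ on all cross-cluster pairs precisely when~\eqref{ass1}–\eqref{ass2} are strict, so that the $\mathscr C_Q$ block is rich enough to kill the homogeneous cone down to $\{0\}$. If some of these multipliers are unavoidably zero, one must instead run the elimination using only the $\mathscr C_L x\le 0$ inequalities for those constraints, which is more delicate; I would handle it by an averaging/summation trick over the $L$-inequalities to recover the needed equalities. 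The remaining steps — setting up the dual, reusing complementary slackness from Proposition~\ref{Th: optimality}, and the final elementary linear algebra — are routine.
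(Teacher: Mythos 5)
Your overall strategy is the same as the paper's: invoke Proposition~\ref{mang} with the dual certificate of Proposition~\ref{Th: optimality}, use the strictness of~\eqref{ass1}--\eqref{ass2} to push the cross-cluster multipliers $\bar\sigma_{ij}$ and the mixed-triangle multipliers $\bar\lambda$ strictly above zero (the paper does this by adding $\epsilon>0$ to $\eta$ and by exploiting that~\eqref{g1}--\eqref{g2} only fix differences of $\bar\lambda$'s), so that the corresponding active constraints land in $Q$, and then show the homogeneous system~\eqref{uc} admits only $X=0$.

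There is, however, a concrete error in your identification of the active triangle inequalities, and it breaks the elimination step as you have written it. At $\bar X$ given by~\eqref{optsol}, the constraint $X_{ij}+X_{ik}\le X_{ii}+X_{jk}$ with $i\in\Gamma_1$ and $j,k\in\Gamma_2$ is \emph{not} tight: the left side is $0$ while the right side is $\frac{1}{|\Gamma_1|}+\frac{1}{|\Gamma_2|}>0$. By the definitions in Proposition~\ref{mang}, such a constraint belongs to neither $Q$ nor $L$, and complementary slackness forces its multiplier to vanish (this is exactly condition (i) in the proof of Proposition~\ref{Th: optimality}). The tight mixed triangles are the opposite configuration: $i,j$ in one cluster and $k$ in the other, which yields $\frac{1}{|\Gamma_l|}+0=\frac{1}{|\Gamma_l|}+0$. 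You state this correctly in your second paragraph, but your third paragraph then imposes $X_{ij}+X_{ik}=X_{ii}+X_{jk}$ for $i\in\Gamma_1$, $j,k\in\Gamma_2$ and deduces $X_{jk}=-X_{ii}$; this equality is not available in system~\eqref{uc} because the underlying constraint is inactive, so the ensuing bookkeeping with the constants $c_l$, the within-cluster $L$-inequalities, and the trace condition does not constitute a proof. The repair is exactly the paper's computation: from the genuinely tight mixed triangles in $Q$ together with $X_{ik}=X_{jk}=0$ for cross pairs one gets $X_{ij}=X_{ii}$ for $i,j$ in the same cluster, hence~\eqref{cs6}, and substituting into the per-cluster zero row sums already forces $X=0$; neither the within-cluster triangles nor $\Tr(X)=0$ is needed.
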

\begin{proof}
Consider the dual certificate $(\bar \lambda, \bar \mu, \bar \omega, \bar \sigma)$ constructed in the proof of Proposition~\ref{Th: optimality}. We consider a slightly modified version of this certificate by redefining $\eta$ defined in~\eqref{etadef} as follows:
$$
\eta = \frac{r_2}{2}\Bigg(\Big(1-\frac{r_1}{r_2}\Big) \max_{k \in \Gamma_1}\din_k +\Big(1-\frac{r_2}{r_1}\Big) \min_{k\in \Gamma_2}\din_k+ \frac{r_1}{r_2} \underset{k \in \Gamma_1}{\dashsum}{\din_k}+\frac{r_2}{r_1} 
   \underset{k \in \Gamma_2}{\dashsum}{\din_k}\Bigg)+\epsilon,
$$
for some $\epsilon > 0$. This in turn will imply that $\bar \sigma_{ij} > 0$ for all $i \in \Gamma_1$, $j \in \Gamma_2$. Notice that this is an admissible modification as we are assuming that inequalities~\eqref{ass1} and~\eqref{ass2} are strictly satisfied. In addition, we can choose
\begin{align*}
    &\bar \lambda_{ijk} > \max\Big\{0,\frac{d_{jk}-d_{ik}}{n/2}+\frac{\din_{i}-\din_{j}}{|\Gamma_2|}\Big\}, \; \bar \lambda_{jik} > \max\Big\{0,\frac{d_{ik}-d_{jk}}{n/2}+\frac{\din_{j}-\din_{i}}{|\Gamma_2|}\Big\}, \; \forall i, j \in \Gamma_1, k \in \Gamma_2 \\
&\bar \lambda_{ijk}  > \max\Big\{0,\frac{d_{jk}-d_{ik}}{n/2}+\frac{\din_{i}-\din_{j}}{|\Gamma_1|}\Big\}, \; \bar \lambda_{jik} > \max\Big\{0,\frac{d_{ik}-d_{jk}}{n/2}+\frac{\din_{j}-\din_{i}}{|\Gamma_1|}\Big\}, \; \forall i, j \in \Gamma_2, k \in \Gamma_1. 
\end{align*}
Therefore, by Proposition~\ref{mang}, the matrix $\bar X$ defined by~\eqref{optsol} is the unique optimal solution of Problem~\eqref{lp:pK2}, if there exists no $X \neq 0$ satisfying:
\begin{align}
    &\sum_{j=1}^n{X_{ij}} = 0, \quad \forall 1 \leq i \leq n \label{cs2}\\
    & X_{ij} = 0, \quad \forall i \in \Gamma_1, j \in \Gamma_2 \label{cs3}\\
    & X_{ij}+X_{ik} = X_{ii}+X_{jk}, \quad \forall i,j \in \Gamma_1, k \in \Gamma_2, \; {\rm or} \; i,j \in \Gamma_2, k \in \Gamma_1. \label{cs4}
\end{align}
From~\eqref{cs2} and~\eqref{cs3} it follows that
\begin{equation}\label{cs5}
    X_{ii} + \sum_{j \in \Gamma_1}{X_{ij}} = 0, \quad \forall i \in \Gamma_1, \quad 
    X_{ii} + \sum_{j \in \Gamma_2}{X_{ij}} = 0, \quad \forall i \in \Gamma_2.
\end{equation}
Moreover, from equations~\eqref{cs4} we deduce that
\begin{equation}\label{cs6}
X_{ij} = \frac{X_{ii}+X_{jj}}{2}, \quad \forall i,j \in \Gamma_1 \; {\rm or} \; i,j \in \Gamma_2.
\end{equation}
Substituting~\eqref{cs6} in~\eqref{cs5} we obtain:
$
X_{ii} = X_{ij} = 0$ for all $i,j \in \Gamma_1$ and for all $i,j \in \Gamma_2$, 
which together with~\eqref{cs3} completes the proof.
\end{proof}

To have an intuitive understanding of our proximity condition, consider the special case with $\din_i = \alpha$ for all $i \in [n]$ and $|\Gamma_1| = |\Gamma_2|$. This assumption is for instance satisfied, if $|\Gamma_1| = |\Gamma_2|$ and $\Gamma_1$ and $\Gamma_2$ are uniformly distributed on two spheres $S_1$ and $S_2$ of equal radii. Denote by $c_1$ the center of $S_1$. By $\Gamma_1$ being uniformly distributed on $S_1$, we imply that for any rotation matrix $R\in \mathbb R^{d\times d}$ for which there exist two points $x^i,x^j\in \Gamma_1$ such that $R(x^i-c_1)=(x^j-c_1)$, we have $R(\Gamma_1-c_1)=(\Gamma_1-c_1)$. In this setting we have  $\eta = 1$ and assumptions~\eqref{ass1} and~\eqref{ass2} simplify to:
$$
\underset{k \in \Gamma_2}{\dashsum}{\min\{ d_{ik}, d_{jk} \}} \geq d_{ij}, \; \forall i < j \in \Gamma_1, 
\quad
\underset{k \in \Gamma_1}{\dashsum}{\min\{d_{ik},  d_{jk}\}} \geq d_{ij}, \; \forall i < j \in \Gamma_2.
$$   
These inequalities require that the distance between any two points $x_i,x_j$ in the same cluster should be smaller than the average distance between the sets $\{x_i, x_j\}$ and $\{x_k\}$, where $x_k$ denotes a point in the other cluster. Using our proximity condition~\eqref{ass1}-~\eqref{ass2}, we next obtain a recovery guarantee for the LP relaxation under a popular stochastic model for the input data.

\subsection{Recovery guarantee for the stochastic sphere model}
\label{sec:recovery}

It is widely understood that worst-case guarantees for optimization algorithms are often too pessimistic. A recent line of research in
data clustering is concerned with obtaining sufficient conditions under which a planted clustering corresponds to the unique optimal solution of a convex relaxation under suitable stochastic models~\cite{Awaetal15,MixVilWar17,IduMixPetVil17,LiLiLi20,AntoAida20}. Such conditions are often referred to as (exact) \emph{recovery} conditions and are used to compare the strength of various convex relaxations for NP-hard problems. Henceforth, we say that an optimization problem \emph{recovers} the planted clusters if its unique optimal solution corresponds to the planted clusters.

Perhaps the most popular generative model for K-means clustering is the \emph{stochastic ball model}, where the points are sampled from uniform distributions on $K$ unit balls in $\R^m$. As before, we let $K=2$ and we denote by $\Delta$ the distance between the ball centers. Notice that the question of recovery only makes sense when $\Delta > 2$.
We denote by $\Gamma_1$ and $\Gamma_2$ the set of points sampled from the first and second balls, respectively and without loss of generality we assume $|\Gamma_1| \leq |\Gamma_2|$. In the following, whenever we say \emph{with high probability}, we mean the probability tending to one as $n \rightarrow \infty$.
In~\cite{LiLiLi20}, the authors prove that the Peng-Wei SDP relaxation defined by Problem~\eqref{SDP} recovers the planted clusters with high probability if 
\begin{equation}\label{sdprec}
\Delta > 2 \Big(1+\sqrt{\frac{1}{r_1(m+2)}}\Big),
\end{equation} 
where $r_1$ is defined by~\eqref{rdef}. In the special case of equal-size clusters, \ie $|\Gamma_1|=|\Gamma_2|$, the authors of~\cite{Awaetal15} show that the Peng-Wei SDP relaxation recovers the planted clusters with high probability, if $\Delta > 2\sqrt{2}(1 + \frac{1}{\sqrt{m}})$, while the authors of~\cite{IduMixPetVil17} show that the same SDP recovers the planted clusters with high probability if $\Delta > 2(1 + \frac{2}{m})$. 
Again, for equal-size clusters, in~\cite{AntoAida20}, the authors show that Problem~\eqref{lp:pK2} recovers the planted clusters with high probability, if $\Delta > 1 + \sqrt{3}$.

In this section, we obtain a recovery guarantee for the LP relaxation for two clusters of arbitrary size. For simplicity, we consider a slightly different stochastic model, which we refer to as the \rm{stochastic sphere model} (SSM),
where points in each cluster are sampled from a sphere (\ie the boundary of a ball). We prove that our deterministic condition given by inequalities~\eqref{ass1}-\eqref{ass2} implies that
Problem~\eqref{lp:pK2} recovers the planted
clusters with high probability, if 
\begin{equation}\label{lprec}
\Delta > 1+ \sqrt{ 1+ \frac 2{r_1}}.
\end{equation}  
We should mention that, at the expense of a significantly longer proof,  one can obtain the same recovery guarantee~\eqref{lprec} for the LP relaxation under the SBM. We do not include the latter result in this paper because, while the proof is more technical and longer, it does not contain any new ideas and closely follows the path of our proof for the SSM.
Indeed, in case of equal-size clusters; \ie $r_1=1$, inequality~\eqref{lprec} simplifies to the recovery guarantee of~\cite{AntoAida20} for SBM: $\Delta > 1+\sqrt{3}$.  
As we detail in the next section, condition~\eqref{lprec} can be significantly improved via a more careful selection of the dual certificate.

Throughout this section, for an event $A$, we denote by $\prob(A)$ the probability of $A$. For a random variable $Y$,  we denote by $\avg[Y]$ its expected value. In case of a multivariate random variable $X_{ij}$, the conditional expected value in $j$, with $i$ fixed, will be denoted either with $\avg_i[X]$ or with $\avg^j[X]$. 
We denote by $\partial \B_1$ and $\partial \B_2$ the spheres corresponding to the first and second clusters, respectively.
Up to a rotation we can assume that the centers of $\partial \B_1$ and  $\partial \B_2$  are $0$ and $\Delta e_1$, respectively, where $e_1$ is the first vector of the standard basis of $\R^m$.
For a continuous function $f:\partial \B_1\to \R$ (and analogously for $\partial \B_2$), we define
$$\dashint_{\partial \B_1} f(x) d\H^{m-1}(x):=\frac 1{\H^{m-1}(\partial \B_1)}\int_{\partial \B_1} f(x) d\H^{m-1}(x),$$
where $\H^{m-1}(x)$ denotes the $(m-1)$-dimensional Hausdorff measure. 

\vspace{0.1in}

We are now ready to state our recovery result.

\begin{proposition}\label{recovery}
Suppose that the points are generated according to the SSM.
Then Problem~\eqref{lp:pK2} recovers the planted clusters with high probability if $\Delta > \Delta_0 :=1+ \sqrt{ 1+ \frac 2{r_1}}$, where $r_1$ is defined by~\eqref{rdef}.
\end{proposition}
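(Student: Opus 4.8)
The plan is to verify the deterministic proximity condition given by inequalities~\eqref{ass1}--\eqref{ass2} holds with high probability when $\Delta > \Delta_0$, and then invoke Proposition~\ref{th: uniqueness} (in its strict form) to conclude recovery. The first step is to compute the relevant ``population'' quantities by replacing empirical averages with integrals over the spheres $\partial\B_1$ and $\partial\B_2$. Since points on $\partial\B_1$ have the form $x = u$ with $|u| = 1$ (center at $0$) and points on $\partial\B_2$ have the form $x = \Delta e_1 + v$ with $|v| = 1$, one gets $d_{ij} = |x^i - x^j|^2$, and by symmetry of the uniform distribution on a sphere the expected value of $d_{ij}$ for two distinct points in the same cluster is $2$ (since $\avg|u-u'|^2 = 2 - 2\avg\langle u,u'\rangle = 2$), so $\din_i \to 2$ in an appropriate sense. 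Similarly the expected cross-cluster squared distance is $\Delta^2 + 2$, giving $\dout_i \to \Delta^2 + 2$. Plugging $\din_k \equiv 2$ into~\eqref{etadef} and using $r_1 + r_2 = 2$ collapses $\eta$ to $r_2 = 2/(1+r_1/r_1)\cdots$; more precisely with all $\din_k$ equal to $\alpha = 2$ the bracket in~\eqref{etadef} becomes $\alpha\left((1 - r_1/r_2) + (1 - r_2/r_1) + r_1/r_2 + r_2/r_1\right) = 2\alpha$, so $\eta = r_2\alpha = 2r_2$; with $\alpha=2$ we'd expect $\eta$ to be a clean constant (in the equal-size case $r_1=r_2=1$ this gives $\eta=2$, though the excerpt's worked example claims $\eta=1$ — I would recheck the normalization, the discrepancy coming from $n/2$ versus $|\Gamma_l|$ conventions).

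The second step is the heart of the argument: showing the left-hand side of~\eqref{ass1}, namely $\dashsum_{k\in\Gamma_2}\min\{r_2 d_{ik} + \din_j, r_2 d_{jk} + \din_i\} - d_{ij}$, exceeds $\eta$ for all pairs $i<j\in\Gamma_1$ simultaneously. The key geometric fact is that for $i,j$ on the unit sphere $\partial\B_1$ and $k$ ranging over $\partial\B_2$, the quantities $d_{ik} = |x^i - x^k|^2$ and $d_{jk}$ are each concentrated around $\Delta^2 + 2$ but with fluctuations of order $\Delta$ depending on the position of $k$ relative to $x^i$ versus $x^j$. One must take the average over $k$ of the pointwise minimum, which is strictly smaller than the minimum of the averages; quantifying this gap is what produces the $\sqrt{1 + 2/r_1}$ term. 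I would parametrize: write $x^i = u_i$, and for $k$ with $x^k = \Delta e_1 + v_k$, expand $d_{ik} = \Delta^2 + 1 + |u_i|^2 - 2\Delta\langle e_1, u_i\rangle - 2\langle v_k, u_i - \Delta e_1\rangle$ — wait, more carefully $d_{ik} = |\Delta e_1 + v_k - u_i|^2 = \Delta^2 + |v_k - u_i|^2 + 2\Delta\langle e_1, v_k - u_i\rangle$. The dominant $k$-dependent term is $2\Delta\langle e_1, v_k\rangle$ which is the same for $d_{ik}$ and $d_{jk}$, so it cancels in comparing them; the discriminating term is $2\Delta\langle e_1, u_i\rangle$ vs $2\Delta\langle e_1, u_j\rangle$ — but this is $k$-independent, so actually the min is achieved at a fixed index depending only on $i,j$, not on $k$! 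Hence $\dashsum_k \min\{r_2 d_{ik}+\din_j, r_2 d_{jk}+\din_i\}$ equals $\min$ of the two averages up to lower-order $k$-dependent fluctuations, and I would need to carefully track those $O(|v_k - u_i|^2 - |v_k - u_j|^2)$ terms, which do depend on $k$ and on the relative positions, and whose sign can vary with $k$.

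After establishing the population-level inequality $\Delta^2 + 2 - 2 > 2$ type bound (which reduces to $\Delta^2 > $ some explicit quadratic in $\Delta$, namely $\Delta^2 - 2\Delta - (1 + 2/r_1) > 0$, whose positive root is exactly $\Delta_0 = 1 + \sqrt{1 + 2/r_1}$), the third step is a concentration argument: the empirical averages $\dashsum_{k\in\Gamma_l}(\cdot)$, $\din_i$, $\dout_i$, and the min/max over clusters in~\eqref{etadef} all converge to their population values uniformly in $i,j$ with high probability, since the summands are bounded continuous functions of i.i.d.\ points on a compact manifold. A union bound over the $O(n^2)$ pairs together with exponential (Hoeffding/McDiarmid) tail bounds handles this; one needs the strict inequality $\Delta > \Delta_0$ precisely to absorb the $o(1)$ concentration error and still land strictly above $\eta$, which then feeds into the strict version of Proposition~\ref{th: uniqueness}. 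The main obstacle I anticipate is the careful bookkeeping in step two: the pointwise minimum inside the average does not commute with the average, and one must show the resulting correction is of lower order (or has the right sign) uniformly over all pairs $i,j$ in the same cluster — in particular handling pairs $x^i, x^j$ that are nearly antipodal on their sphere, where $d_{ij}$ is as large as $4$ and the inequality is tightest. Verifying that even in this worst case the average of the cross-distances dominates by the required margin $\eta$ is where the $\sqrt{1+2/r_1}$ threshold is genuinely needed and where the bulk of the technical work lies; the asymmetry between~\eqref{ass1} and~\eqref{ass2} (the factor $r_1/r_2$ on the right-hand side) must also be tracked since $|\Gamma_1| \le |\Gamma_2|$ makes the two clusters play non-symmetric roles.
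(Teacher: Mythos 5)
Your overall architecture is the same as the paper's: reduce to verifying the proximity conditions~\eqref{ass1}--\eqref{ass2} strictly, invoke Proposition~\ref{th: uniqueness}, compute the population limits ($\avg[\din_i]=2$, $\eta\to 2r_2$), and close with Hoeffding plus a union bound over the $O(n^2)$ pairs. Those pieces, including the final quadratic $\Delta^2-2\Delta-2/r_1>0$ with root $\Delta_0=1+\sqrt{1+2/r_1}$, all match the paper. (One small slip: for a fixed $x^i\in\partial\B_1$, $\dout_i$ concentrates around $\Delta^2+2-2\Delta\langle e_1,x^i\rangle$, not $\Delta^2+2$; this is harmless since $\dout$ does not appear in~\eqref{ass1}--\eqref{ass2}, but the $x$-dependence you dropped is exactly the term that drives the extremal analysis below.)

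The genuine gap is the population-level geometric inequality, which your proposal identifies but does not prove. Writing $x^k=\Delta e_1+v_k$, one has $d_{ik}-d_{jk}=-2\langle x^k,\,x^i-x^j\rangle$, so your intermediate claim that the argmin inside $\dashsum_{k}\min\{\cdot,\cdot\}$ is achieved at a fixed index independent of $k$ is false: whenever $x^i-x^j$ is not parallel to $e_1$, the sign of $d_{ik}-d_{jk}$ genuinely varies with $k$, and the gap between the average of the minima and the minimum of the averages is of order $\Delta$, not lower order. You walk this back but never resolve it. The paper devotes Lemma~\ref{MasterOfProbability0} and Lemma~\ref{l:inter} to precisely this point: the condition is rewritten as
\begin{equation*}
\max_{x,y\in\partial\B_1}\Big(r\dashint_{\partial\B_2}\max\{x^Tz,\,y^Tz\}\,d\H^{m-1}(z)-x^Ty\Big)<\tfrac{r}{2}\Delta^2 ,
\end{equation*}
and the whole difficulty is showing that the maximum is attained at the antipodal pair $(e_1,-e_1)$, uniformly for all $r\in(0,2)$ (which the paper deduces from the $r=2$ case by a convexity trick). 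That requires a multi-step argument: slicing the sphere into circles, reducing to dimension $m=2$, a critical-point analysis showing interior extrema are symmetric pairs, and several boundary cases. You correctly guessed that nearly antipodal pairs are the tight configuration, but asserting this is not proving it, and without that lemma the threshold $\Delta_0$ is not established. As written, the proposal is a correct plan with the central lemma left as a conjecture.
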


\begin{proof}
By Proposition~\ref{th: uniqueness}, it suffices to show that for $\Delta > \Delta_0$, inequalities~\eqref{ass1}-\eqref{ass2} are strictly satisfied with high probability.
Namely, we show that there exists a universal constant $C>0$ such that, for $\Delta > \Delta_0$ we have
\begin{equation}\label{toprove}
\begin{split}
\prob \Big (\bigcap_{i,j\in \Gamma_1}\Big\{d_{ij} +\eta-\underset{k \in \Gamma_2}{\dashsum}{\min\{r_2 d_{ik} + \din_j , r_2 d_{jk} + \din_i\}} < 0\Big\}\Big ) \geq 1-Cn^2e^{-\frac{n\epsilon^2_2}{C}}
\end{split}
\end{equation}
and
\begin{equation*}
\begin{split}
\prob \Big (\bigcap_{i,j\in \Gamma_2}\Big\{d_{ij} +\frac{r_1}{r_2}\eta-\underset{k \in \Gamma_1}{\dashsum}{\min\{r_1 d_{ik} + \din_j , r_1 d_{jk} + \din_i\}} < 0\Big\}\Big ) \geq 1-Cn^2e^{-\frac{n\epsilon^2_1}{C}},
\end{split}
\end{equation*}
where $\epsilon_1,\epsilon_2 > 0$ are as defined in the statement of Lemma \ref{MasterOfProbability0}.
Since the two inequalities are symmetric, their proof is similar and we will only prove inequality~\eqref{toprove}.
To this aim, for notational simplicity, define
$$t_{ij}:=\avg^k\Big[\underset{k \in \Gamma_2}{\dashsum}{\min\{r_2d_{ik} + \avg_j[ \din_j] , r_2d_{jk} + \avg_i[\din_i]\}}\Big].$$
Then we can compute
\begin{equation}\label{toprove0}
\begin{split}
&\prob \Big (\bigcap_{i,j\in \Gamma_1}\Big\{d_{ij} +\eta-\underset{k \in \Gamma_2}{\dashsum}{\min\{r_2 d_{ik} + \din_j , r_2 d_{jk} + \din_i\}} < 0\Big\}\Big ) \\
&\geq \prob \Big(\bigcap_{i,j\in \Gamma_1}\Big\{d_{ij} +\eta-d_{ij}-2r_2+t_{ij}-\underset{k \in \Gamma_2}{\dashsum}{\min\{r_2 d_{ik} + \din_j , r_2 d_{jk} + \din_i\}} < 4\epsilon_2\Big\}\Big) \\
&\geq  \prob\Big (\Big\{\Big | \eta-2r_2 \Big|<2\epsilon_2\Big\}\cap \bigcap_{i,j\in \Gamma_1}\Big\{\Big |t_{ij}-\avg^k\Big[\underset{k \in \Gamma_2}{\dashsum}{\min\{r_2 d_{ik} + \din_j , r_2 d_{jk} + \din_i\}}\Big]\Big | <\epsilon_2\Big \} \\
&\cap\bigcap_{i,j\in \Gamma_1}\Big\{\Big |\avg^k\Big[\underset{k \in \Gamma_2}{\dashsum}{\min\{r_2 d_{ik} + \din_j , r_2 d_{jk} + \din_i\}}\Big]-\underset{k \in \Gamma_2}{\dashsum}{\min\{r_2 d_{ik} + \din_j , r_2 d_{jk} + \din_i\}}\Big | <\epsilon_2 \Big\} \Big ) \\
&\geq  1-\prob  (\{ |\eta-2r_2|\geq 2\epsilon_2\})-\prob \Big (\bigcup_{i,j\in \Gamma_1}\Big\{\Big |t_{ij}-\avg^k\Big[\underset{k \in \Gamma_2}{\dashsum}{\min\{r_2d_{ik} + \din_j , r_2d_{jk} + \din_i\}}\Big]\Big | \geq \epsilon_2\Big \}\Big )\\
&-\prob \Big (\bigcup_{i,j\in \Gamma_1}\Big\{\Big |\avg^k\Big[\underset{k \in \Gamma_2}{\dashsum}{\min\{r_2 d_{ik} + \din_j , r_2 d_{jk} + \din_i\}}\Big]-\underset{k \in \Gamma_2}{\dashsum}{\min\{r_2 d_{ik} + \din_j , r_2 d_{jk} + \din_i\}}\Big | \geq\epsilon_2 \Big\} \Big )
\end{split}
\end{equation}
The first inequality follows from \eqref{22220} in Lemma~\ref{MasterOfProbability0}, since $\Delta > \Delta_0$; the second inequality holds by set inclusion, and the
third inequality is obtained by taking the union bound. 
To complete the proof, we next estimate each of the terms in the last two lines of \eqref{toprove0}.
In the following, $C$ will always denote a universal positive constant, which may increase from one line to the next line and we will not relabel it for the sake of exposition.  First, to estimate $\prob  (\{ |\eta-2r_2|\geq 2\epsilon_2\})$, we define \begin{equation}\label{etadef1}
    \eta_1 := \frac{r_2}{2}\Bigg(\Big(1-\frac{r_1}{r_2}\Big) \max_{k \in \Gamma_1}\din_k +\Big(1-\frac{r_2}{r_1}\Big) \min_{k\in \Gamma_2}\din_k\Bigg), \quad \eta_2:= \frac{r_2}{2}\Bigg(\frac{r_1}{r_2} \underset{k \in \Gamma_1}{\dashsum}{\din_k}+\frac{r_2}{r_1} 
   \underset{k \in \Gamma_2}{\dashsum}{\din_k}\Bigg),
    \end{equation}
so that $\eta=\eta_1+\eta_2$. 
Recall that $|\Gamma_1|=r_1\frac{n}{2}$, $|\Gamma_2|=r_2\frac{n}{2}$, $r_1\in (0,1]$ and $r_2\in [1,2)$.  Since for $\Delta >  4$, the recovery follows from a simple thresholding argument, we can restrict our attention to  $\Delta \leq  4$, \ie we assume that $r_1 \geq \frac{1}{4}$. It then follows that
\begin{equation}\label{usef}
\begin{split}
\prob &  (\{ |\eta_2-\avg[\eta_2]|\geq \epsilon_2\}) \\
&= \prob\Big(\Big |r_1\Bigg(\underset{i,j \in \Gamma_1}{\dashsum}d_{ij} -  \avg\Big[\underset{i,j \in \Gamma_1}{\dashsum}d_{ij}\Big] \Bigg)+\frac{r_2^2}{r_1}\Bigg(\avg\Big[\underset{i,j \in \Gamma_2}{\dashsum} d_{ij}\Big]- \underset{i,j \in \Gamma_2}{\dashsum} d_{ij}\Bigg)\Big | \geq 2\epsilon_2 \Big) \\
&\leq \prob\Big (\Big\{\Big |\underset{i,j \in \Gamma_1}{\dashsum}d_{ij} -  \avg\Big[\underset{i,j \in \Gamma_1}{\dashsum}d_{ij}\Big]\Big|\geq \frac{\epsilon_2}{C} \Big\}  \cup \Big\{\Big |\avg\Big[\underset{i,j \in \Gamma_2}{\dashsum} d_{ij}\Big]- \underset{i,j \in \Gamma_2}{\dashsum} d_{ij}\Big | \geq \frac{\epsilon_2}{C}\Big\}\Big ) \\
&\leq  \prob\Big (\Big |\underset{i,j \in \Gamma_1}{\dashsum}d_{ij} -  \avg\Big[\underset{i,j \in \Gamma_1}{\dashsum}d_{ij}\Big] \Big|\geq \frac{\epsilon_2}{C}\Big)+ \prob\Big (\Big |\avg\Big[\underset{i,j \in \Gamma_2}{\dashsum} d_{ij}\Big]- \underset{i,j \in \Gamma_2}{\dashsum} d_{ij}\Big | \geq \frac{\epsilon_2}{C}\Big ) \leq Ce^{-\frac{n^2\epsilon_2^2}{C}}.
\end{split}
\end{equation}
The first inequality holds by set inclusion and the third inequality follows from Hoeffding's inequality (see for example Theorem 2.2.6 in~\cite{VerBookHDP}), since
$d_{ij}$, $i,j \in \Gamma_l$ are i.i.d. random variables for every $l \in \{1,2\}$ and $d_{ij} \in [0,4]$. Next we show that 
\begin{equation}\label{usef0}
\prob  \Big(\Big\{ \Big|\eta_1-\Big(2r_2-r_1-\frac{r_2^2}{r_1}\Big)\Big|\geq \epsilon_2\Big\}\Big)\leq Cne^{-\frac{n\epsilon_2^2}{C}}.
\end{equation}
For notational simplicity, we denote the $i$th point in $\Gamma_1$ by $x$. 
For any $i \in  \Gamma_1$ we have
\begin{equation}\label{0}
\begin{split}
\avg_i[\din_i]&=\dashint_{\partial \B_1}\|x-z\|^2 d\H^{m-1}(z)= \|x\|^2+\dashint_{\partial \B_1}\|z\|^2d\H^{m-1}(z)-2x^T\dashint_{\partial \B_1}zd\H^{m-1}(z) = 2.
\end{split}
\end{equation}
By symmetry, the same calculation holds for $\avg_i[\din_i]$ with $i \in \Gamma_2$.
By~\eqref{0}, we have
\begin{equation}\label{p}
\begin{split}
\avg\big[\underset{k \in \Gamma_1}{\dashsum}{\din_k}\big]&=\avg\big[\underset{k \in \Gamma_2}{\dashsum}{\din_k}\big]=2.
\end{split}
\end{equation}
Using Hoeffding's inequality together with~\eqref{0}, we get 
$$\prob  (\{ \din_k > 2+\epsilon_2 \})\leq Ce^{-\frac{n\epsilon_2^2}{C}}, \quad \prob  (\{ \din_k < 2-\epsilon_2 \})\leq Ce^{-\frac{n\epsilon_2^2}{C}}, \quad \forall k\in \Gamma_1\cup \Gamma_2.$$
Hence, by the union bound, we obtain 
$$\prob  (\{ |\max_{k \in \Gamma_1}\din_k-2|\geq \epsilon_2\})\leq Cne^{-\frac{n\epsilon_2^2}{C}}, \quad \prob  (\{ |\min_{k\in \Gamma_2}\din_k-2|\geq \epsilon_2\})\leq Cne^{-\frac{n\epsilon_2^2}{C}},$$
from which we conclude the validity of~\eqref{usef0}. 
Since by~\eqref{etadef1} and~\eqref{p} we have
$$
\avg\big[\eta_2\big] = r_2\Big(\frac{r_1}{r_2} +\frac{r_2}{r_1} 
  \Big)= \Big(r_1+\frac{r_2^2}{r_1}\Big) ,
$$
we can combine \eqref{usef} with \eqref{usef0} to conclude that
\begin{equation}\label{usef1}
\prob  (\{ |\eta-2r_2|\geq 2\epsilon_2\})\leq Cne^{-\frac{n\epsilon_2^2}{C}}.
\end{equation}
We now observe that
\begin{equation}\label{3333}
\begin{split}
\prob \Big (\bigcup_{i,j\in \Gamma_1}\Big\{\Big |t_{ij}&-\avg^k\Big[\underset{k \in \Gamma_2}{\dashsum}{\min\{r_2d_{ik} + \din_j , r_2d_{jk} + \din_i\}}\Big]\Big | \geq \epsilon_2\Big \}\Big ) \\
&\leq \prob \Big (\bigcup_{i\in \Gamma_1}\Big\{\Big | \din_i- \avg_i[ \din_i]\Big |\geq \epsilon_2/2\Big\}\Big )\leq  Cne^{-\frac{n\epsilon_2^2}{C}},
\end{split}
\end{equation}
where the first inequality follows from the linearity of expectation and the second inequality follows from the application of Hoeffding's inequality and taking the union bound. 
Finally, by Hoeffding's inequality we have
\begin{equation}\label{33331}
\begin{split}
&\prob \Big (\bigcup_{i,j\in \Gamma_1}\Big\{\Big |\avg^k\Big[\underset{k \in \Gamma_2}{\dashsum}{\min\{r_2 d_{ik} + \din_j , r_2 d_{jk} + \din_i\}}\Big]-\underset{k \in \Gamma_2}{\dashsum}{\min\{r_2 d_{ik} + \din_j , r_2 d_{jk} + \din_i\}}\Big | \geq\epsilon_2 \Big\} \Big )\\
&\leq  Cn^2e^{-\frac{n\epsilon_2^2}{C}}.
\end{split}
\end{equation}

Plugging inequalities~\eqref{usef1}-\eqref{3333}-\eqref{33331} in \eqref{toprove0}, we conclude the claimed inequality \eqref{toprove}.
\end{proof}


To prove Proposition~\ref{recovery} we made use of the following technical lemma whose proof is provided in the Appendix.

\begin{lemma}\label{MasterOfProbability0}
Suppose that the random points are generated according to the SSM.
Then the following inequalities hold provided that $\Delta > \Delta_0$:
\begin{equation}\label{22221}
4 \epsilon_1:=\inf_{i,j \in \Gamma_2}\avg^k\Big[\underset{k \in \Gamma_1}{\dashsum}{\min\{r_1d_{ik} + \avg_j[ \din_j] , r_1d_{jk} + \avg_i[\din_i]\}}\Big]- d_{ij}-2r_1 >0,
\end{equation}
\begin{equation}\label{22220}
4 \epsilon_2:=\inf_{i,j \in \Gamma_1}\avg^k\Big[\underset{k \in \Gamma_2}{\dashsum}{\min\{r_2d_{ik} + \avg_j[ \din_j] , r_2d_{jk} + \avg_i[\din_i]\}}\Big]- d_{ij}-2r_2 >0.
\end{equation}
\end{lemma}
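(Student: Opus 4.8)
The plan is to first reduce both inequalities \eqref{22221} and \eqref{22220} to a single deterministic estimate on the unit sphere, and then carry out an elementary optimization over that sphere. Throughout one may assume $\Delta\le 4$, since for $\Delta>4$ recovery follows from the thresholding argument already used in the proof of Proposition~\ref{recovery}; combined with $\Delta>\Delta_0$ this forces $r_1\ge\frac14$.

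\emph{Reduction to the sphere.} Since the points of $\Gamma_1$ (resp.\ $\Gamma_2$) are i.i.d.\ uniform on $\partial\B_1$ (resp.\ $\partial\B_2$), for fixed $i,j$ the operator $\avg^k\big[\underset{k\in\Gamma_2}{\dashsum}\,\cdot\,\big]$ collapses to the integral over a single uniform point $z\in\partial\B_2$; together with the identity $\avg_i[\din_i]=2$ established in~\eqref{0}, the left-hand side of \eqref{22220}, with $x:=x^i$ and $y:=x^j$, equals $r_2\big(\avg^z[\min\{\|x-z\|^2,\|y-z\|^2\}]-2\big)+2-\|x-y\|^2$. I would expand $\min\{p,q\}=\frac{p+q}{2}-\frac{|p-q|}{2}$, use $\|x-z\|^2-\|y-z\|^2=\|x\|^2-\|y\|^2-2(x-y)^Tz$, and evaluate the first and second moments of $z$, which are immediate from spherical symmetry. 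Reflecting the first coordinate in the $\Gamma_1$ case and translating by the center in the $\Gamma_2$ case, both \eqref{22220} and \eqref{22221} reduce to the following claim: for $r\in\{r_1,r_2\}$ and every $a,b\in S^{m-1}$,
\begin{equation}\label{prop:reduced}
\Phi_r(a,b):=r\Big(\Delta^2+\Delta(a_1+b_1)-\avg_U\big[\,\big|\,\Delta(a_1-b_1)+\|a-b\|\,U\,\big|\,\big]\Big)+2-\|a-b\|^2\;>\;0,
\end{equation}
where $U$ has the law of one coordinate of a uniform point on $S^{m-1}$; here $r=r_2$ corresponds to \eqref{22220} and $r=r_1$ to \eqref{22221}.

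\emph{The sphere optimization.} The key estimate is that, since $|U|\le1$ a.s., $\avg[U]=0$ and $1+U\ge0$ a.s., the convex even map $c\mapsto\avg_U[\,|c+\beta U|\,]$ is nondecreasing on $[0,\infty)$ and equals $|c|$ once $|c|\ge\beta$; hence $\avg_U[\,|\Delta(a_1-b_1)+\|a-b\|U|\,]\le\max\{\Delta|a_1-b_1|,\,\|a-b\|\}$. Substituting this into \eqref{prop:reduced} gives a lower bound $\Psi_r(a,b)$ free of the distribution of $U$, which I would minimize over $S^{m-1}\times S^{m-1}$ via two cases. Writing $\rho:=\|a-b\|\in[0,2]$: if $\Delta|a_1-b_1|\ge\rho$ then, taking $a_1\ge b_1$, $\Psi_r=r(\Delta^2+2\Delta b_1)+2-\rho^2\ge r\Delta(\Delta-2)-2$ because $b_1\ge-1$ and $\rho\le2$, and this is positive exactly when $\Delta>1+\sqrt{1+2/r}$, which for $r=r_1$ is $\Delta_0$ and for $r=r_2\ge r_1$ is weaker; if $\rho>\Delta|a_1-b_1|$ then the parallelogram identity $\|a+b\|^2=4-\rho^2$ yields $a_1+b_1\ge-\sqrt{4-\rho^2}$, hence $\Psi_r\ge r(\Delta^2-\Delta\sqrt{4-\rho^2}-\rho)+2-\rho^2$, and an AM--GM step $r\Delta\sqrt{4-\rho^2}\le\frac{r^2\Delta^2}{4}+4-\rho^2$ turns this into the $\rho$-free bound $\Psi_r\ge r\Delta^2(1-r/4)-2-2r$, which a short arithmetic check shows is positive for $\Delta>\Delta_0$ when $r_1\in[\frac14,1]$ and $r_1+r_2=2$. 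Combining the two cases proves \eqref{prop:reduced}, and therefore the lemma, for every $\Delta>\Delta_0$; moreover the bounds are uniform, so the two infima are in fact bounded below by a positive constant depending only on $m$, $\Delta$, and $r_1$.

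\emph{Main obstacle.} The reduction and the one-dimensional estimate are routine once set up; the delicate point is the second case of the sphere optimization. The ``polar'' configuration $a=e_1$, $b=-e_1$ makes $\Psi_{r_1}$ attain exactly $r_1\Delta(\Delta-2)-2$, so $\Delta_0$ is forced as the threshold, and one must then produce a lower bound for $\Psi_{r_1}$ in the regime $\rho>\Delta|a_1-b_1|$ that stays positive for \emph{every} $\Delta>\Delta_0$, not merely near that configuration — this is the role of the parallelogram-plus-AM--GM argument — and verify that the threshold it produces is dominated by $\Delta_0$ throughout $r_1\in[\frac14,1]$.
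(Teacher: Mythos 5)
Your proof is correct, and for the core estimate it takes a genuinely different --- and considerably shorter --- route than the paper's. The reduction step is the same in substance: after using $\avg_i[\din_i]=2$ and the first two moments of a uniform point on a sphere, your condition $\Phi_r(a,b)>0$ is algebraically identical to the paper's Claim~\ref{claim1}, namely $r\dashint_{\partial \B_2}\max\{x^Tz,y^Tz\}\,d\H^{m-1}(z)-x^Ty<\frac r2\Delta^2$ (write $\max=\tfrac12(\mathrm{sum})+\tfrac12|\mathrm{diff}|$ and $2x^Ty=2-\|x-y\|^2$). The divergence is in how that sphere inequality is established. The paper computes the \emph{exact} maximum of $F_r$ over $\partial\B_1\times\partial\B_1$: it first proves $\max F_2=F_2(e_1,-e_1)=2\Delta+1$ via the six-step geometric argument of Lemma~\ref{l:inter} (slicing into circles, symmetrization, critical-point and boundary analysis), then interpolates to general $r$. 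You bypass all of that with the one-line convexity bound $\avg_U[\,|c+\beta U|\,]\le\max\{|c|,\beta\}$ --- valid because the map is convex and even, equals $|c|$ for $|c|\ge\beta$ since $\avg[U]=0$, and equals $\beta$ at $c=\pm\beta$ since $1+U\ge0$ --- followed by an elementary two-case minimization. This is sound: the bound is exact at the antipodal pair $(e_1,-e_1)$ (your Case~1 with $b_1=-1$, $\rho=2$ reproduces $r\Delta(\Delta-2)-2$, so the threshold $\Delta_0$ is not degraded), and in Case~2 the parallelogram-plus-AM--GM chain yields $r\Delta^2(1-r/4)-2-2r$, whose positivity for $\Delta>\Delta_0$ reduces to $\Delta_0^2\ge\frac{8(1+r)}{r(4-r)}$; this does hold with ample margin for $r=r_1$ and $r=r_2=2-r_1$ over the whole relevant range --- in fact for all $r_1\in(0,1]$, so your preliminary restriction to $\Delta\le 4$ and $r_1\ge\frac14$ is not actually needed (both case bounds are increasing in $\Delta$ on $[2,\infty)$, so positivity just above $\Delta_0$ propagates upward). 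What each approach buys: the paper's yields the exact value $\max F_r=r\Delta+1$, reusing and extending the machinery of Lemma~1 in~\cite{AntoAida20}; yours is dimension-free from the outset (no reduction from spheres to circles), self-contained, and far shorter, at the cost of only providing an upper bound on $\max F_r$ away from the antipodal configuration --- which is all the lemma requires.
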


While the recovery guarantee of Proposition~\ref{recovery} is the first of its kind for an LP relaxation of K-means clustering, it is too conservative. We demonstrate this fact via numerical simulations. We let $n=100$, $m=2$, $r_1 \in \{0.6, 0.8, 1.0\}$, and $\Delta \in [2.0:0.01:3.2]$. For each fixed configuration, we generate 20 random trials according to the SSM.
We count the number of times the optimization algorithm returns the
planted clusters as the optimal solution; dividing this number by the total number of trials, we obtain the empirical recovery rate. We use the same set up as before to solve all LPs and SDPs. Our results are shown in Figure~\ref{figure3}. Clearly, in all cases the LP outperforms the SDP. Moreover, our results indicate that the recovery threshold of the LP relaxation for SSM is significantly better than the one given by Proposition~\ref{recovery}. We should also remark that in \emph{all} these experiments, the LP relaxation is tight; \ie whenever the LP fails in recovering the planted clusters, its optimal solution is still a partition matrix. In contrast, the SDP relaxation is not tight in almost all cases for which it does not recover the planted clusters.

\begin{figure}[htbp]
 \centering
  \subfigure[$r_1 = 1$]{\label{fig3a}\epsfig{figure=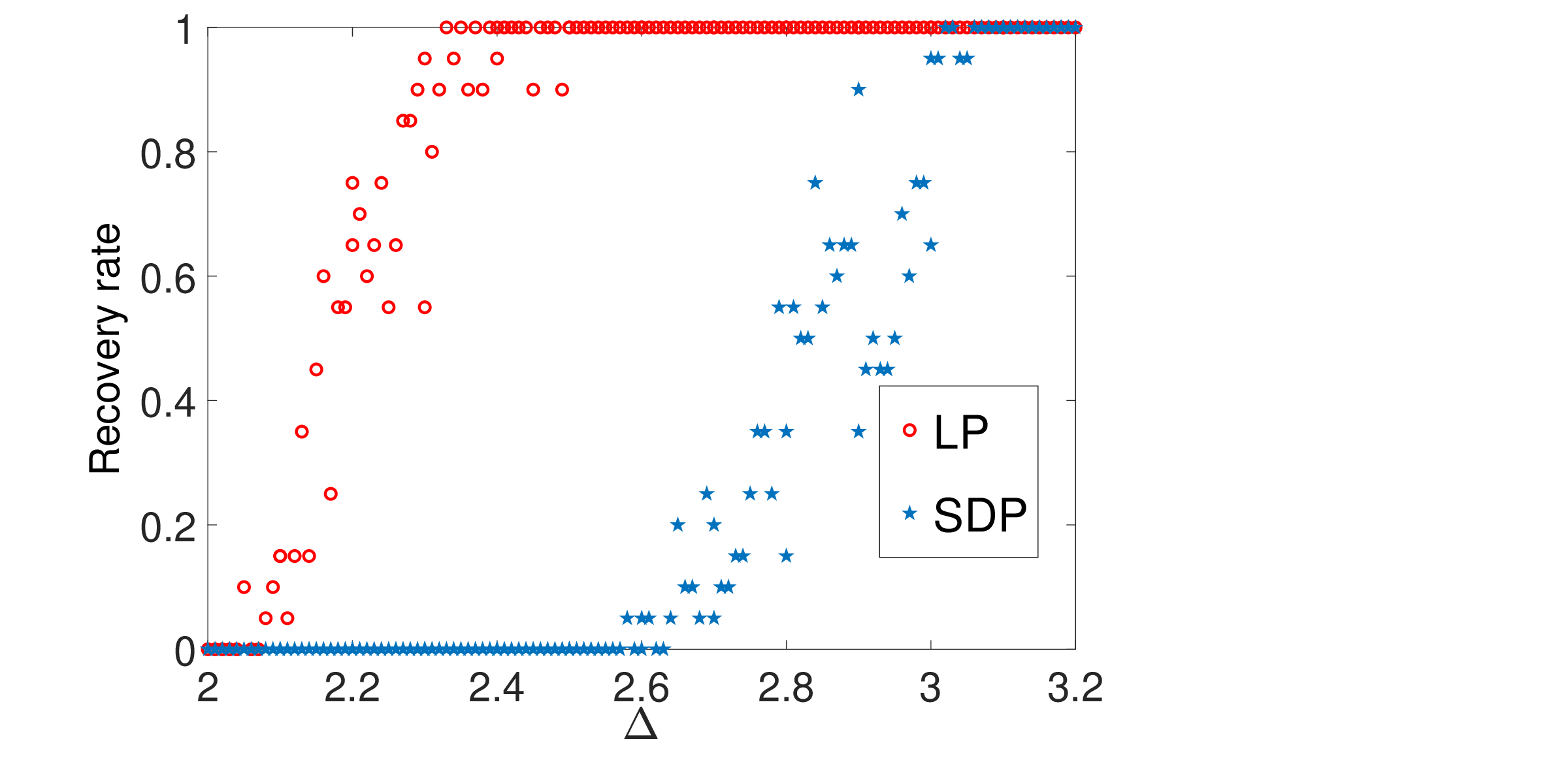, scale=0.2, trim=22mm 0mm 120mm 0mm, clip}
  }
 \subfigure [$r_1 = 0.8$]{\label{fig1b}\epsfig{figure=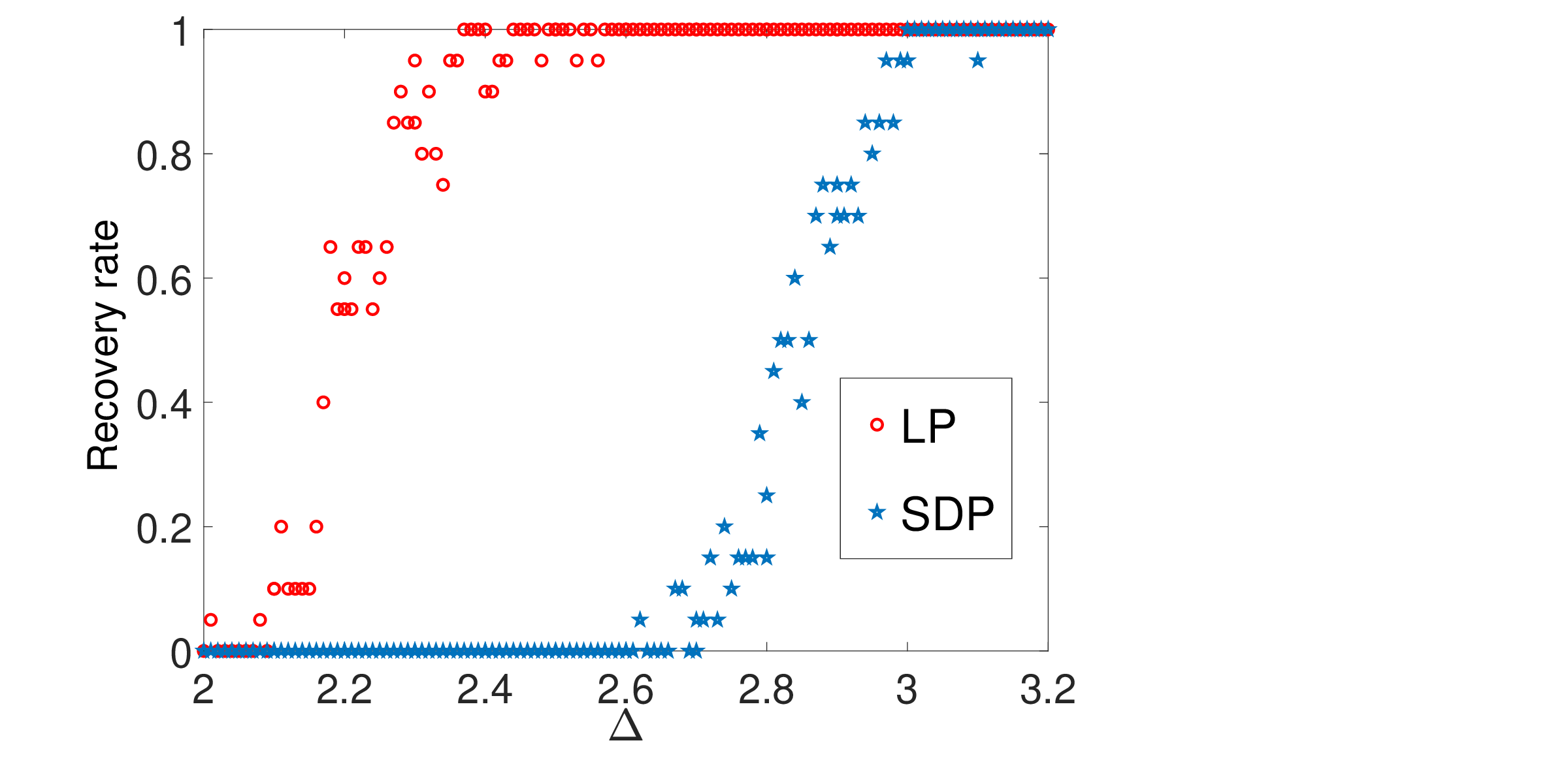, scale=0.2, trim=22mm 0mm 120mm 0mm, clip}}
 \subfigure [$r_1 = 0.6$]{\label{fig3c}\epsfig{figure=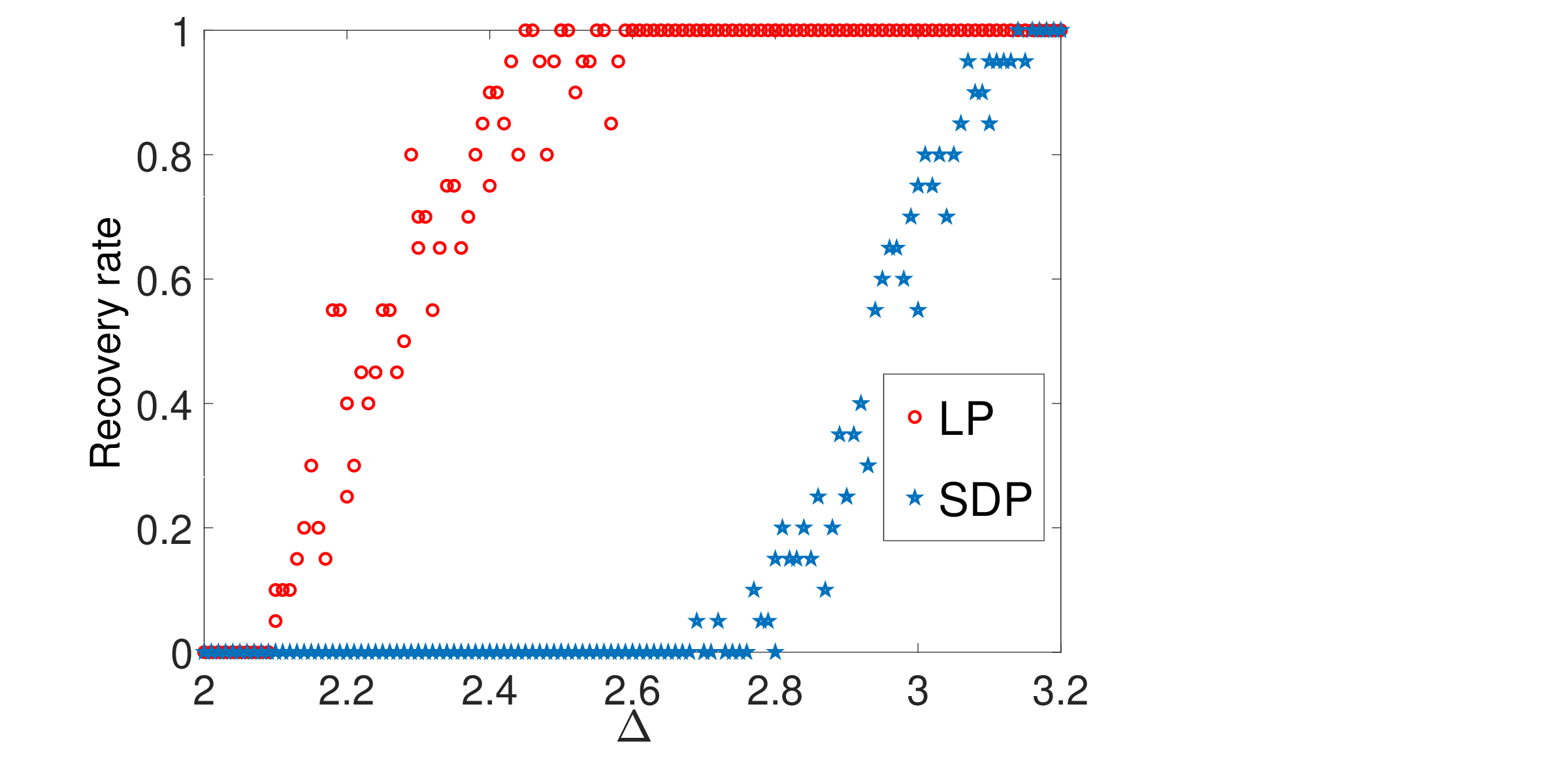, scale=0.2, trim=22mm 0mm 120mm 0mm, clip}}
  \caption{The recovery rate of the LP  versus the SDP when the input is generated according to the SSM with $n=100$ and $m=2$.
}
\label{figure3}
\end{figure}

\subsection{A stronger dual certificate}
\label{sec:certify}

By Proposition~\ref{Th: optimality}, if inequalities~\eqref{ass1} and~\eqref{ass2} are satisfied then the partition matrix $\bar X$
defined by~\eqref{optsol} is an optimal solution of Problem~\eqref{lp:pK2} and as a result $\bar X$ is an optimal solution of the K-means clustering problem. 
In the following we present a simple algorithm that leads to significantly better recovery guarantees. 
Recall that in the last step of the proof of Proposition~\ref{Th: optimality}, our task is to identify conditions under which inequalities~\eqref{s1} and~\eqref{s2} can be satisfied. In the proof we let $\bar \lambda_{ijk} = 0$ for all $(i,j,k) \in \Gamma_1$ and
$\bar \lambda_{ijk} = 0$ for all $(i,j,k) \in \Gamma_2$, which in turn gives us inequalities~\eqref{s1} and~\eqref{s2}. As we show next, a careful selection of these multipliers will lead to significantly better recovery results.
Define 
$$
   \gamma_{ij} := 2 \underset{k \in \Gamma_2}{\dashsum}{\min\{r_2 d_{ik} + \din_j , r_2 d_{jk} + \din_i\}} -2 d_{ij} - 2 \eta, \quad \forall i < j \in \Gamma_1,
$$
and 
$$
\gamma_{ij} := 2\underset{k \in \Gamma_1}{\dashsum}{\min\{r_1 d_{ik} + \din_j , r_1 d_{jk} + \din_i\}} -2 d_{ij} - 2\frac{r_1}{r_2}\eta, \quad \forall i < j \in \Gamma_2.
$$
We define $\gamma_{ji} := \gamma_{ij}$ for all $i < j$.
Then, by the proof of Proposition~\ref{Th: optimality}, the partition matrix $\bar X$ defined by~\eqref{optsol} is an optimal solution of Problem~\eqref{lp:pK2} if the following system is feasible:
\begin{align}\label{sys}
& \sum_{k \in \Gamma_{l'} \setminus \{i,j\}} {(\lambda_{ijk} + \lambda_{jik}-\lambda_{kij})} \leq \gamma_{ij}, \quad \forall i < j \in \Gamma_l, l\neq l' \in \{1,2\} \nonumber\\
& \lambda_{ijk} \geq 0, \quad \forall (i,j,k) \in \Gamma_l, l \in \{1,2\},
\end{align}
where as before we let $\lambda_{ikj} = \lambda_{ijk}$ for $j < k$.
We next present a simple algorithm whose successful termination serves as a sufficient condition for feasibility of system~\eqref{sys}:

\vspace{0.2in}

\begin{algorithm}[H]
\SetKwFunction{cert}{Certify}
\SetAlgorithmName{\cert}{\cert}{}
 \KwIn{Given $\gamma_{ij}$ for all $i < j \in \Gamma_l$, $l\in \{1,2\}$}
 \KwOut{A Boolean {\tt success}}
 Initialize $\bar \lambda_{ijk} =\bar \lambda_{jik}=\bar \lambda_{kij}=0$ for all $(i,j,k) \in \Gamma_l$, $l \in \{1,2\}$, $\bar r_{ij} = \gamma_{ij}$ for all $(i, j) \in \Gamma_l$, $l \in \{1,2\}$ and $\N = \{(i,j) \in \Gamma_l, l \in \{1,2\}: i <j, \; \bar r_{ij} < 0 \}$.\\
 \While {$\N \neq \emptyset$,}{
 set {\tt success = .false.}\\
 select some $(i,j) \in \N$ \\
  \For {each $k \in \Gamma_l \setminus \{i,j\}$,}{
  let $\omega = \min \{-\bar r_{ij}, \bar r_{ik}, \bar r_{jk}\}$. \\
  if $\omega \leq 0$, then cycle\\
 update $\bar r_{ik} \leftarrow \bar r_{ik} - \omega$, $\bar r_{jk} \leftarrow \bar r_{jk} - \omega$, and $\bar r_{ij} \leftarrow \bar r_{ij} + \omega$\\
 update $\bar \lambda_{kij} \leftarrow \bar \lambda_{kij} + \omega$, $\bar \lambda_{kji} \leftarrow \bar \lambda_{kji} + \omega$\\
  \If {$\bar r_{ij} \geq 0$,} {update success ={\tt .true.} \\
  update $\N \leftarrow \N \setminus \{(i,j)\}$ \\
  exit the k-loop}
 }
 if {\tt success = .false.}, then \Return\\
 }
\caption{The algorithm for constructing a dual certificate}
\label{alg:dual}
\end{algorithm}
\vspace{0.2in}

\begin{proposition}\label{prop:alg}
    If Algorithm~\cert terminates with {\tt  success = .true.},
    then the partition matrix $\bar X$ defined by~\eqref{optsol} is an optimal solution of Problem~\eqref{lp:pK2}. Moreover, Algorithm~\cert runs in $O(n^3)$ operations.
\end{proposition}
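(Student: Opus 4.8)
The plan is to verify two things: first, that successful termination of the algorithm produces a feasible point of system~\eqref{sys}, and second, that the running time is $\Theta(n^3)$. For the correctness part, I would track two invariants maintained throughout the while loop. The first invariant is that at every stage the current multipliers $\bar\lambda$ and the residuals $\bar r_{ij}$ satisfy the identity $\bar r_{ij} = \gamma_{ij} - \sum_{k \in \Gamma_{l'} \setminus \{i,j\}}(\bar\lambda_{ijk} + \bar\lambda_{jik} - \bar\lambda_{kij})$ for every pair $i<j$ in $\Gamma_l$; this is immediate by inspection of the update rules, since whenever we add $\omega$ to $\bar\lambda_{kij}$ (and symmetrically $\bar\lambda_{kji}$), the residual $\bar r_{ij}$ increases by $\omega$, while the residuals $\bar r_{ik}$ and $\bar r_{jk}$ each decrease by $\omega$ to reflect the new term $-\bar\lambda_{kij}$ appearing in their defining sums — here I must be careful to check the bookkeeping of which index plays the "apex" role $k$ in each of the three triples affected. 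The second invariant is that all $\bar\lambda$ values remain nonnegative (trivially, since we only ever add the nonnegative quantity $\omega$) and — crucially — that once $\bar r_{ij} \geq 0$ for a pair, it remains $\geq 0$ forever after. This last point holds because a residual $\bar r_{ik}$ is only ever decreased when the algorithm is processing the pair $(i,k)$ itself (not $(i,j)$), and the decrement is bounded by $\min\{-\bar r_{\text{apex-pair}}, \bar r_{ik}, \bar r_{jk}\} \leq \bar r_{ik}$, so it cannot be driven negative. Combining these: if the algorithm exits the while loop with $\N = \emptyset$, then every pair has $\bar r_{ij} \geq 0$, which by the first invariant is exactly the feasibility of~\eqref{sys}; then Proposition~\ref{Th: optimality} (more precisely, the reduction carried out in its proof down to system~\eqref{sys}) gives optimality of $\bar X$.

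The subtle point in the correctness argument is the role of the Boolean \texttt{success} and the early \texttt{return}: the algorithm may fail, and the proposition only claims the implication in the successful direction, so I only need the invariants above, not a proof that the algorithm always succeeds. Still, I would remark that the \texttt{cycle}/\texttt{success = .false.} logic guarantees that if we ever pass through an entire $k$-loop for a chosen pair $(i,j)$ without making $\bar r_{ij}$ nonnegative, the procedure halts — so successful termination genuinely means $\N$ was emptied, legitimizing the conclusion. I would also note that the monotonicity invariant (nonnegative residuals stay nonnegative) ensures that pairs removed from $\N$ never need to be reinserted, so $|\N|$ is nonincreasing across while-iterations and the loop is well-defined.

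For the complexity bound, the clean way is to bound the total work by a potential/counting argument. There are $\Theta(n^2)$ pairs, hence $\N$ starts with $O(n^2)$ elements and strictly decreases by one each time a while-iteration succeeds; and since an unsuccessful iteration triggers an immediate return, the number of while-iterations is $O(n^2)$. Each while-iteration runs one $k$-loop of length $|\Gamma_l| - 2 = O(n)$, with $O(1)$ work per $k$. That already gives $O(n^3)$. For the matching lower bound $\Omega(n^3)$, I would observe that the initialization alone — setting $\bar\lambda_{ijk}$ to zero for all triples $(i,j,k)$ within each $\Gamma_l$ — touches $\Theta(n^3)$ entries, so the algorithm cannot run in fewer than $\Omega(n^3)$ operations; hence the running time is $\Theta(n^3)$.

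I expect the main obstacle to be the index bookkeeping in the first invariant: correctly identifying, for the triple $(i,j,k)$ processed with apex $k$, exactly which three residuals $\bar r_{ij}, \bar r_{ik}, \bar r_{jk}$ change and by how much, keeping the symmetry convention $\lambda_{ikj} = \lambda_{ijk}$ consistent with the form of system~\eqref{sys}, where the coefficient pattern is $+\lambda_{ijk} + \lambda_{jik} - \lambda_{kij}$. Once that is pinned down, both the invariant-preservation and the complexity count are routine.
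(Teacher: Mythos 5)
Your proposal is correct and follows essentially the same route as the paper: you track the identity $\bar r_{ij} = \gamma_{ij} - \sum_{k}(\bar\lambda_{ijk}+\bar\lambda_{jik}-\bar\lambda_{kij})$ as a loop invariant, observe that each increment of $\bar\lambda_{kij}$ touches exactly three residuals and that the choice $\omega \leq \min\{\bar r_{ik},\bar r_{jk}\}$ keeps already-nonnegative residuals nonnegative, and conclude feasibility of system~\eqref{sys} (hence optimality of $\bar X$ via Proposition~\ref{Th: optimality}) upon successful termination, with the $\Theta(n^3)$ bound from counting $O(n^2)$ while-iterations of $O(n)$ work each plus the $\Omega(n^3)$ initialization. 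One minor slip in your wording: a residual $\bar r_{ik}$ is decreased while the algorithm processes a \emph{different} pair sharing an index with it, not ``the pair $(i,k)$ itself,'' but your actual justification---that the decrement is capped at $\bar r_{ik}$ by the $\min$---is the correct one and is exactly the paper's argument.
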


\begin{proof}
  We prove that the system defined by all inequalities in system~\eqref{sys} with $l=1$ is feasible; the proof for $l=2$ then follows. 
  Define 
  \begin{equation}\label{defr}
  \bar r_{ij} := \gamma_{ij}+\sum_{k \in \Gamma_2 \setminus \{i,j\}} {(\bar\lambda_{kij}-\bar\lambda_{ijk} - \bar\lambda_{jik})}, \quad \forall i < j \in \Gamma_1.
  \end{equation}  
  Then the system defined by all inequalities in system~\eqref{sys} with $l=1$ can be equivalently written as:
  \begin{equation}\label{sys2}
  \bar r_{ij} \geq 0, \quad \forall i < j \in \Gamma_1, \quad \bar \lambda_{ijk} \geq 0, \quad \forall (i,j,k) \in \Gamma_1.    
  \end{equation}
  If $\N=\emptyset$, then by letting $\bar\lambda_{ijk} =\bar\lambda_{jik}=\bar \lambda_{kij}=0$ for all $(i,j,k) \in \Gamma_1$, we obtain a feasible solution and the algorithm terminates with {\tt success = .true.}. 
  Hence, suppose that $\N \neq \emptyset$; that is, the initialization step violates inequalities $\bar r_{ij} \geq 0$ for all $(i,j) \in \N$. 
  Consider an iteration of the algorithm for some $(\bar i, \bar j) \in \N$;  we claim that if this iteration is completed with {\tt success = .true.}, all nonnegative $\bar r_{ij}$, $i < j \in \Gamma_1$ remain nonnegative (even though their values may decrease), and we will have $\bar r_{\overline {ij}} \geq 0$. Since the value of $\bar \lambda$ does not decrease over the course of the algorithm, this in turn implies that if the algorithm terminates with {\tt success = .true.},  $(\bar \lambda, \bar r)$ is feasible for system~\eqref{sys2}.  To see this, consider some $\bar k \in \Gamma_1 \setminus \{\bar i, \bar j\}$ for which we have 
  $\omega = \min \{-\bar r_{\overline{ij}}, \bar r_{\overline{ik}}, \bar r_{\overline{jk}}\} > 0$. Notice that variable $\bar \lambda_{\overline{kij}}$ (which is equal to $\bar \lambda_{\overline{kji}}$) appears only in three equations of~\eqref{defr} defining $\bar r_{\overline{ij}}$ (with positive coefficient), $\bar r_{\overline{ik}}$ (with negative coefficient), and $\bar r_{\overline{jk}}$ (with negative coefficient). Since $\bar r_{\bar{ik}} \geq \omega$, $\bar r_{\bar{jk}} \geq \omega$, and $\omega > 0$, we can increase the value of $\bar \lambda_{\overline{kij}}$ by $\omega$ and keep the system feasible, this in turn implies that the value of $\bar r_{\overline{ij}}$ will increase by $\omega$, while the values of $\bar r_{\overline{ik}}$ and $\bar r_{\overline{jk}}$ decrease by $\omega$. Therefore, if the algorithm terminates with {\tt success=.true.}, the assignment $(\bar \lambda, \bar r)$ is a feasible solution for system~\eqref{sys2}. It is simple to verify that this algorithm runs in $O(n^3)$ operation.
\end{proof}

Let us now comment on the power of Algorithm~\cert for recovering the planted clusters under the SBM. Since we do not have an explicit proximity condition, we are unable to perform a rigorous probabilistic analysis similar to that of Section~\ref{sec:recovery}. However, in dimension $m=2$, we can perform a high precision simulation as the computational cost of Algorithm~\cert is very low. First, we consider the case of equal-size clusters; \ie $r_1=r_2 =1$. We set $n=20000$ and we observe that for $\Delta > 2.14$, Algorithm~\cert terminates with {\tt success = .true.}. We then conjecture the following:

\begin{conjecture}\label{conji}
Let $m \geq 2$, and suppose that the points are generated according to the SBM with equal-size clusters. If $\Delta > 2.14$, then Problem~\eqref{lp:pK2} recovers the planted clusters with high probability.
\end{conjecture}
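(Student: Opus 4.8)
The plan is to run the same machinery as in the proof of Proposition~\ref{recovery}, but with the explicit proximity condition~\eqref{ass1}--\eqref{ass2} replaced by the (implicit) condition that system~\eqref{sys} is feasible, which by Proposition~\ref{prop:alg} certifies optimality of the planted partition matrix $\bar X$. First I would show that, under the SBM with $r_1=r_2=1$, the deficits $\gamma_{ij}$ concentrate uniformly over $i<j\in\Gamma_l$ around a deterministic kernel $\hat\gamma_l$ on $\B_l\times\B_l$: conditioning on the point positions, $\din_i$ and $\din_j$ concentrate around $\|x^i\|^2+\tfrac{m}{m+2}$ and $\|x^j\|^2+\tfrac{m}{m+2}$, $\eta$ concentrates around a constant, and the remaining term is an average over i.i.d.\ points of a bounded function; a union bound over the $O(n^2)$ pairs then gives the uniform estimate, exactly as in Lemma~\ref{MasterOfProbability0} and the estimates~\eqref{usef}--\eqref{33331}. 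Feasibility of~\eqref{sys} (up to an $o(1)$ slack absorbed by the concentration) thus reduces to the continuum statement: there exists a measurable $\lambda:\B_1^{3}\to[0,\infty)$, symmetric in its last two arguments, with
\[
\int_{\B_1}\bigl(\lambda(x,y,z)+\lambda(y,x,z)-\lambda(z,x,y)\bigr)\,dz\ \le\ \hat\gamma_1(x,y)\qquad\text{for a.e. }(x,y)\in\B_1\times\B_1,
\]
together with the symmetric statement on $\B_2$. A strict version of this, fed into the argument of Proposition~\ref{th: uniqueness} (strict complementary slackness of the resulting certificate and the rank argument via~\eqref{cs2}--\eqref{cs6}), upgrades optimality to exact recovery with high probability.

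Second, I would attack the continuum feasibility problem itself. By infinite-dimensional linear programming duality (a Farkas lemma for the transportation-type operator above), the displayed system is feasible if and only if $\iint_{\B_1\times\B_1}\hat\gamma_1(x,y)\,\mu(x,y)\,dx\,dy\ge 0$ for every nonnegative symmetric kernel $\mu$ lying in the ``metric cone'', i.e.\ obeying the triangle inequalities $\mu(x,z)+\mu(y,z)\ge\mu(x,y)$ for a.e.\ triple. Testing against cut pseudometrics $\mu_S(x,y)=\mathbf 1[\,|\{x,y\}\cap S|=1\,]$ yields the clean necessary condition $\iint_{S\times S^{c}}\hat\gamma_1\ge 0$ for every measurable $S\subseteq\B_1$, and one would then need to argue that this already controls the whole metric cone for the particular kernel $\hat\gamma_1$ at hand. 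In practice, and more in the spirit of this paper, I would instead bypass duality and write down an explicit rotation-equivariant ansatz for $\lambda$ — mass placed only on near-collinear triples with $z$ roughly between $x$ and $y$, which is precisely what the greedy Algorithm~\cert does when it routes a deficit from an almost-antipodal pair through interior points — thereby collapsing the constraint to a single scalar inequality in the ball geometry. Note that taking $\lambda\equiv 0$ recovers exactly the threshold $\Delta>1+\sqrt3$ of~\cite{AntoAida20}, so the whole point is to optimize the ansatz past that value.

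Third — and this is where the main obstacle lies — one must verify this scalar inequality for all $\Delta>2.14$ and show it fails below. Following the slicing reduction of Lemma~\ref{l:inter} (Steps~1--3), I expect the worst-case configuration to again be a near-antipodal pair of boundary points of $\B_1$, so that the problem is dimension-independent and reduces to $m=2$ and then to a one- or two-variable calculus inequality whose critical value is the root of a transcendental equation; the number $2.14$ observed in simulation is presumably (a rounding of) that root. Proving that the minimum is attained at the antipodal configuration — the analogue of Steps~4--6 of Lemma~\ref{l:inter}, but now with the extra degrees of freedom coming from the choice of $\lambda$ — is the crux, and is exactly why the statement is only a conjecture: unlike in Proposition~\ref{recovery}, the dual certificate is not available in closed form, so the joint optimization over point configurations and multipliers does not obviously reduce to a tractable minimax problem. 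A reasonable first milestone would be to prove some explicit constant strictly below $1+\sqrt3$ via a hand-crafted $\lambda$, and only then push the ansatz toward the conjectured $2.14$.
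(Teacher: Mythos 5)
There is a genuine gap here, but it is the gap the paper itself leaves open: the statement is a \emph{conjecture}, and the paper offers no proof of it. The only evidence the authors provide is computational --- they run Algorithm \texttt{Certify} on a single high-precision simulation of the SBM in dimension $m=2$ with $n=20000$ and observe that it terminates with {\tt success = .true.} for $\Delta > 2.14$; the extension to all $m\geq 2$ is pure extrapolation. Your proposal is therefore not being measured against a proof in the paper, and it does not itself close the conjecture: it is a (sensible) research program. You correctly identify the right scaffolding --- the implicit certificate is feasibility of system~\eqref{sys}, certified by Proposition~\ref{prop:alg}; the probabilistic layer would mimic Lemma~\ref{MasterOfProbability0} and the concentration estimates in the proof of Proposition~\ref{recovery}; and uniqueness would come from the perturbed certificate argument of Proposition~\ref{th: uniqueness}. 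Taking $\lambda\equiv 0$ recovering $\Delta>1+\sqrt3$ is also the right sanity check.

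But every step beyond that scaffolding is unproven and nontrivial. Concretely: (a) the passage from the random finite system~\eqref{sys} to a continuum LP on $\B_1^3$ requires uniform concentration of the $\Theta(n^3)$-variable feasibility problem, not just of the $O(n^2)$ right-hand sides $\gamma_{ij}$ --- feasibility of an LP is not a continuous functional of its data, so you would need to exhibit a continuum certificate with strictly positive slack and then discretize it, which is exactly the object you do not have; (b) the infinite-dimensional Farkas step is asserted, and the claim that testing against cut pseudometrics ``already controls the whole metric cone'' for the particular kernel $\hat\gamma_1$ is precisely the kind of statement that fails in general (the cut cone is strictly contained in the metric cone for $n\geq 5$, which is the same obstruction underlying Section~\ref{sec:negative}); (c) the reduction to a one-dimensional scalar inequality via a rotation-equivariant ansatz, and the verification that its critical root is $\approx 2.14$ independently of $m$, is the crux and is entirely open --- as you yourself say. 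So the honest conclusion is that your proposal is a plausible roadmap consistent with the paper's framework, but neither you nor the paper proves the statement; a realistic first target, as you note, would be any explicit constant strictly below $1+\sqrt{3}$ obtained from a hand-crafted continuum $\lambda$, which would already be a new theorem.
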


If true, the recovery guarantee of Conjecture~\ref{conji} is better than the recovery guarantee of the SDP relaxation given by~\eqref{sdprec} for $m \leq 202$. Clearly, any convex relaxation succeeds in recovering the underlying clusters only if the original problem succeeds in doing so. To this date, the recovery threshold for K-means clustering under the SBM for $m > 1$ remains an open question.  
Let us briefly discuss special cases $m=1$ and $m=2$. In~\cite{IduMixPetVil17}, the authors prove that a necessary condition for recovery of K-means clustering in dimension one is $\Delta > 1+\sqrt{3}$. In the same paper, the recovery threshold of the SDP relaxation in dimension one is conjectured to be $\Delta_0 = 4$ (see Section 2.3 of~\cite{IduMixPetVil17} for a detailed discussion).
In~\cite{AntoAida20}, the authors prove that Problem~\eqref{lp:pK2} recovers the planted clusters with high probability (for every $m \geq 1$), if $\Delta > 1+\sqrt{3}$. It then follows that for $m=1$, the K-means clustering problem recovers the planted clusters with high probability if and only if $\Delta > 1+\sqrt{3}$. 
For $m =2$, in~\cite{IduMixPetVil17}, via numerical simulations, the authors show that K-means clustering recovers the planted clusters with high probability only if $\Delta > 2.08$. If true, Conjecture~\ref{conji} implies that in dimension two, the recovery thresholds for K-means clustering and the LP relaxation are fairly close.  In~\cite{LiLiLi20}, the authors prove that, if
\begin{equation}\label{necCond}
\Delta < 1+\sqrt{1+\frac{2}{m+2}},
\end{equation}
then the SDP relaxation fails in recovering the planted clusters with high probability. They also state that ``it remains unclear whether this necessary condition (\ie inequality~\eqref{necCond}) is only necessary for the SDP relaxation or is necessary for the K-means itself.'' If true, Conjecture~\ref{conji} implies that inequality~\eqref{necCond} is \emph{not}
a necessary condition for the K-means clustering problem. 

We further use Algorithm~\cert to estimate the recovery threshold for the case with different cluster sizes, i.e. $r_1\neq 1$, in dimension $m=2$. 
Our results are depicted in Figure~\ref{figure2}. As can be seen from the figure, the recovery guarantee given by Algorithm~\cert is significantly better than that of Proposition~\ref{recovery}. For comparison, we have also plotted the recovery threshold for the SDP relaxation given by condition~\eqref{sdprec} for different input dimensions $m$.

\begin{figure}[htbp]
 \centering
 \epsfig{figure=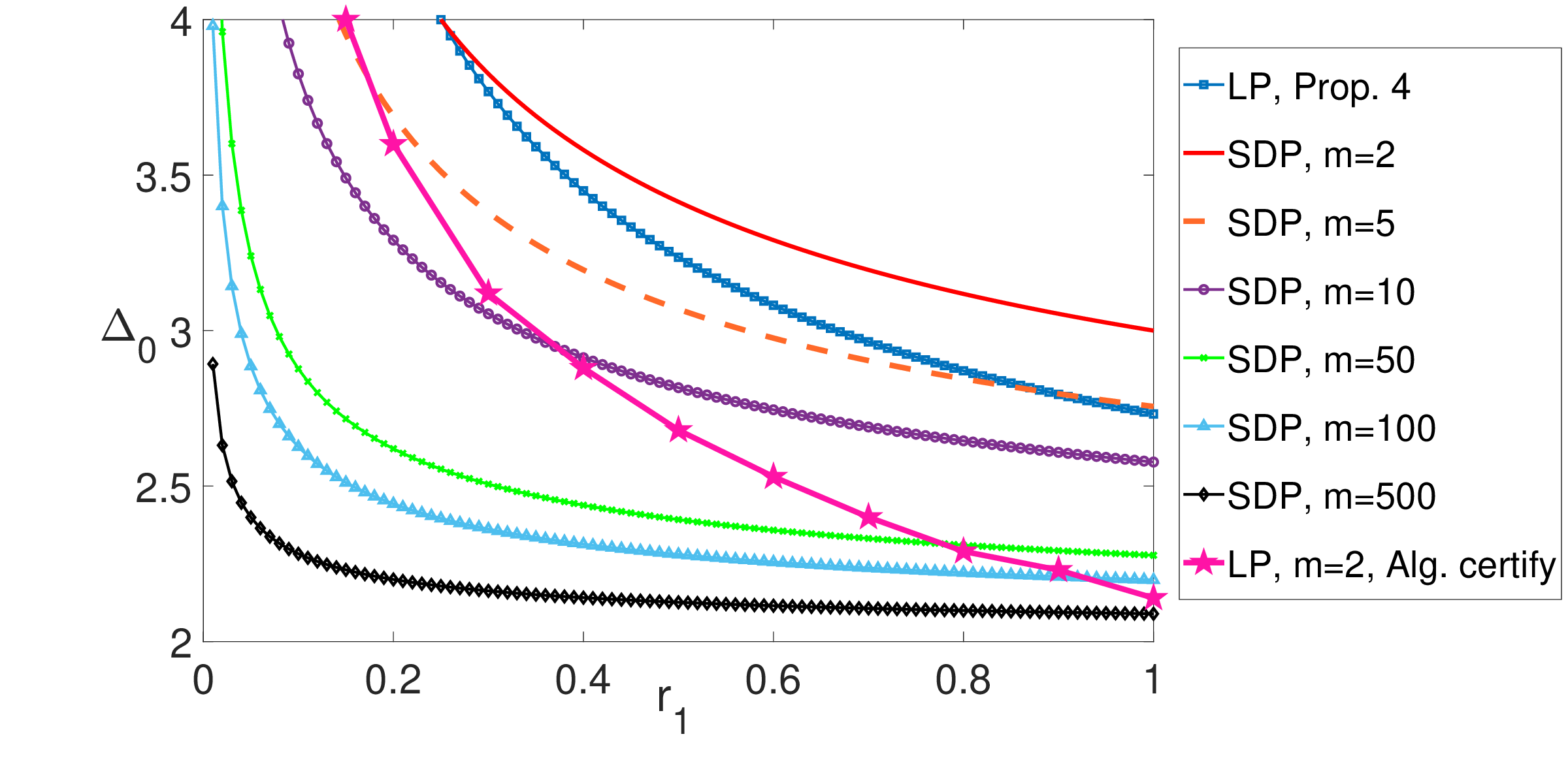, scale=0.23, trim=0mm 0mm 0mm 0mm, clip}
  \caption{Comparing the recovery threshold $\Delta_0$ of the LP relaxation versus the SDP relaxation for K-means clustering when the input is generated according to the SBM.
}
\label{figure2}
\end{figure}

\section{A Counterexample for the tightness of the LP relaxation}
\label{sec:negative}
Motivated by our previous numerical experiments, it is natural to ask whether the LP relaxation is tight with high probability under reasonable generative models. In the following we present a family of inputs for which the LP relaxation is never tight. This family of inputs subsumes as a special case the points generated by a variation of the SBM. 
To prove our result, we make use of the following lemma, which essentially states that if several sets of points have identical optimal cluster centers, then those cluster centers are optimal for the union of all points as well:

\begin{lemma}\label{samecenters}
    Let $\{x^{i,l}: i \in [n_l], l \in [L]\}$ denote a set of $N := \sum_{l\in [L]}{n_l}$ points in $\R^m$ for some $ n_1,\dots,n_L, L \in \mathbb N$, with $ n_1,\dots,n_L, L\geq 1$. For each $l \in [L]$, consider an optimal solution of Problem~\eqref{Kmeans1} for clustering the $n_l$ points 
    $\{x^{i,l}\}_{i=1}^{n_l}$ and denote by $\bar c^l\in \R^{m \times K}$ the matrix whose $k$th column denoted by $\bar c^l_k$  is equal to the center of the $k$th optimal cluster. If $\bar c^l=\bar c^{l'}$
    for all $l, l' \in [L]$, then there exists an optimal solution of Problem~\eqref{Kmeans1} for clustering the entire set of $N$ points with cluster centers $\bar c \in \R^{m \times K}$ satisfying $\bar c^l = \bar c$ for all $l\in [L]$.
\end{lemma}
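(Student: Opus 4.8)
The plan is to exploit the additive structure of the K-means objective over the $L$ groups, together with two classical facts: the centroid of a finite set minimizes its sum of squared distances, and splitting a cluster into two recentered parts never increases the clustering cost, so that refining any partition into more parts can only decrease the cost.

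First I would fix a common labeling of the $K$ centers. Since $\bar c^l=\bar c^{l'}$ as unordered collections of $K$ points for all $l,l'\in[L]$, we may reindex the optimal clusters of each group so that, for every $l$, the $k$-th optimal cluster $\Gamma_k^l\subseteq[n_l]$ has centroid $\bar c_k\in\R^m$, the same point for all $l$ (here $n_l\geq K$ is implicit, since otherwise Problem~\eqref{Kmeans1} is infeasible on group $l$). Write $\mathrm{OPT}_l$ for the optimal value of Problem~\eqref{Kmeans1} on group $l$, so that $\mathrm{OPT}_l=\sum_{k=1}^K\sum_{i\in\Gamma_k^l}\|x^{i,l}-\bar c_k\|_2^2$.

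Next I would exhibit the candidate solution for the union. Let $\Gamma_k:=\bigcup_{l\in[L]}\Gamma_k^l$ (viewing the $x^{i,l}$ as $N$ distinct labeled points); this is a partition of the $N$ points into $K$ nonempty sets. Its $k$-th centroid equals $\tfrac{1}{|\Gamma_k|}\sum_l\sum_{i\in\Gamma_k^l}x^{i,l}=\tfrac{1}{|\Gamma_k|}\sum_l|\Gamma_k^l|\,\bar c_k=\bar c_k$, so the cost of this partition is $\sum_k\sum_l\sum_{i\in\Gamma_k^l}\|x^{i,l}-\bar c_k\|_2^2=\sum_{l\in[L]}\mathrm{OPT}_l$, and its centers vector is $\bar c:=(\bar c_1,\dots,\bar c_K)$. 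It then remains to prove the matching lower bound, namely that the optimal value of Problem~\eqref{Kmeans1} on the union of all $N$ points is at least $\sum_{l\in[L]}\mathrm{OPT}_l$.

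For the lower bound, take any partition $\{\Delta_k\}_{k=1}^K$ of the $N$ points with centroids $d_k$; its cost decomposes as $\sum_{l\in[L]}c_l$ with $c_l:=\sum_k\sum_{i:(i,l)\in\Delta_k}\|x^{i,l}-d_k\|_2^2$. Fixing $l$, the nonempty members of $\{\Delta_k\cap(\text{group }l)\}_k$ form a partition of $[n_l]$ into $m_l\le K$ parts; replacing each $d_k$ by the centroid of the corresponding part only decreases $c_l$, and refining the resulting $m_l$-part partition into a $K$-part partition of $[n_l]$ (repeatedly splitting a part of size at least two, always possible while the number of parts is below $K\le n_l$) only decreases the cost further, so $c_l\ge\mathrm{OPT}_l$. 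Summing over $l$ yields the desired bound, hence $\{\Gamma_k\}$ is optimal for the union with centers vector $\bar c=\bar c^l$, which proves the lemma. The routine ingredients are classical; the only genuine care is the bookkeeping of the labeling step above and the observation here that the restriction of a $K$-partition of the union to a single group may have fewer than $K$ nonempty parts — which is precisely why the refinement step is needed — and I expect this, rather than any analytic difficulty, to be where the argument must be made precise.
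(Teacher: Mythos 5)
Your proof is correct, but it takes a different route from the paper's. The paper first passes to the center-based reformulation $\min_{c}\sum_{i}\min_k\lVert y^i-c_k\rVert_2^2$ of Problem~\eqref{Kmeans1}, writes the objective for the union as $\sum_{l}f_l(c)$, and then invokes the one-line inequality $\sum_l\min_c f_l(c)\le\min_c\sum_l f_l(c)$: a common minimizer of all the $f_l$ attains the left-hand side for the sum, hence minimizes the sum. You instead stay entirely within the partition formulation: you build the candidate partition of the union explicitly as $\Gamma_k=\bigcup_l\Gamma_k^l$, verify its centroids and cost, and prove the matching lower bound $\mathrm{OPT}(\text{union})\ge\sum_l\mathrm{OPT}_l$ by restricting an arbitrary $K$-partition of the union to each group, recentering at the restricted centroids, and refining up to $K$ nonempty parts. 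The paper's argument is shorter, but it leans on the equivalence between \eqref{Kmeans1} and the center formulation, which quietly involves the same empty-cluster issue you isolate (a center may attract no points, just as a cluster of the union may miss a group entirely); your restriction-plus-refinement step handles this explicitly and keeps everything at the level of genuine $K$-partitions. The price is the extra bookkeeping of the labeling and refinement steps, all of which you carry out correctly (including the observation that $n_l\ge K$ is forced by feasibility, so the refinement can always proceed). Both proofs establish the same superadditivity-plus-common-optimizer mechanism; yours is the more elementary and self-contained, the paper's the more economical.
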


\begin{proof}{Proof}
    Let $\{y^i\}_{i=1}^n$ denote a set of $n$ points in $\R^m$ that we would like to put into $K$ clusters and denote by $c \in \R^{m \times K}$ the matrix whose $k$th column denoted by $c_k$, is the center of the $k$th cluster. We start by reformulating Problem~\eqref{Kmeans1} as follows: 
    \begin{equation}\label{altform}
    \min_{c \in \R^{m \times K}} \sum_{i=1}^n{\min\Big\{\lVert y^i - c_1 \rVert_2^2, \cdots, \lVert y^i - c_K \rVert_2^2\Big\}}.
    \end{equation}
    Notice that by solving Problem~\eqref{altform} one directly obtains the optimal cluster centers and subsequently can assign each point $y^i$ to a cluster at which $\min\Big\{\lVert y^i - c_1 \rVert_2^2, \cdots, \lVert y^i - c_K \rVert_2^2\Big\}$ is attained. Now for each $l \in [L]$, define
    $$
    f_l(c) := \sum_{i=1}^{n_l}{\min\Big\{\lVert x^{i,l} - c_1 \rVert_2^2, \cdots, \lVert x^{i,l} - c_K \rVert_2^2\Big\}}. 
    $$
    It then follows that the K-means clustering problem for clustering the $n_l$
    points $\{x^{i,l}\}_{i=1}^{n_l}$ for some $l \in [L]$ can be written as:
    \begin{equation}\label{part}
      \min_{c\in \R^{m \times K}} f_l(c),    
    \end{equation}
    while the K-means clustering problem for the entire set of $N$ points can be written as
    \begin{equation}\label{all}
    \min_{c\in \R^{m \times K}} \sum_{l \in L}{f_l(c)}.
     \end{equation}
    Clearly, $\sum_{l \in L} \min_{c\in \R^{m \times K}} f_l(c) \leq \min_{c\in \R^{m \times K}} \sum_{l \in L}{f_l(c)}$. Hence, if for each $l \in [L]$, there exists a minimizer $\bar c^l$ of Problem~\eqref{part} such that $\bar c^l = \bar c^{l'}$ for all $l,l' \in [L]$, we conclude that $\bar c^l$ is a minimizer of Problem~\eqref{all} as well. 
\end{proof}

Now let us investigate the tightness of the LP relaxation defined by Problem~\eqref{lp:pK2}. First note that by Proposition~5 in~\cite{AntoAida20}, if $n \leq 4$, then the feasible region of Problem~\eqref{lp:pK2} coincides with the convex hull of the feasible region of Problem~\eqref{Kmeans2}. Hence, to find an instance for which the LP relaxation is not tight we must have $n \geq 5$.
Consider the following $n = 5$ points in dimension $m = 3$, which we refer to as the \emph{five-point input}: 
\begin{equation}\label{centers}
x^1 = \Big(0, \frac{\sqrt{3}}{3}, 0\Big), \; x^2 = \Big(\frac12, -\frac{\sqrt{3}}{6}, 0\Big), \; x^3 = \Big(-\frac12, -\frac{\sqrt{3}}{6}, 0\Big), \; x^4 =\Big(0,0,\frac12\Big), \; x^5 =\Big(0,0,-\frac12\Big).
\end{equation} 
In this case the vector of squared pair-wise distances $d=(d_{ij})_{1 \leq i < j \leq 5}$ is given by
\begin{equation}\label{distances}
d = \Big(1,1,\frac{7}{12},\frac{7}{12},1,\frac{7}{12},\frac{7}{12},\frac{7}{12},\frac{7}{12},1\Big).
\end{equation} 
By direct calculation it can be checked that the partition 
$\Gamma_1 = \{1,4\}$ and $\Gamma_2 = \{2,3,5\}$ is a minimizer of the K-means clustering problem~\eqref{Kmeans1}  
with the optimal objective value given by 
$f^*_{\rm Kmeans} = \frac{146}{72} > 2$.
Now consider the following matrix
\begin{equation}\label{lilX}
\tilde X = \frac{1}{14}
\begin{bmatrix}
6 & 1 & 1 & 3 & 3\\
1 & 6 & 1 & 3 & 3\\
1 & 1 & 6 & 3 & 3\\
3 & 3 & 3 & 5 & 0\\
3 & 3 & 3 & 0 & 5
\end{bmatrix}.
\end{equation}
%
Notice that $\tilde X$ is not a partition matrix.
It is simple to verify the feasibility of $\tilde X$ for the LP relaxation. Moreover, the objective value of
Problem~\eqref{lp:pK2} at $\tilde X$ evaluates to $\hat f_{\rm LP} = \frac{54}{28} < 2$. Denote by $f^*_{\rm LP}$ the optimal value of Problem~\eqref{lp:pK2}. Since $f^*_{\rm LP} \leq \hat f_{\rm LP} < f^*_{\rm Kmeans}$, we conclude that for the five-point input, the LP relaxation is not tight.

Now consider the following set of points, which we will refer to as the \emph{five-ball input}. Instead of $n=5$ points located at each $x^p$, $p\in \{1,\cdots,5\}$ defined by~\eqref{centers}, suppose that 
we have $n = 5 n'$ points for
some $n ' \in \mathbb N$, $n ' \geq 1$, such that for each $p \in \{1,\ldots, 5\}$, we have  $n'$ points located inside a closed ball of radius $r\geq 0$ centered at $x^p$.
In the following we show that for $r \leq 3 \times 10^{-3}$, the LP relaxation always fails in finding the optimal clusters.

\begin{proposition}\label{non-tightness}
    Consider an instance of the five-ball input for some $r \leq 3 \times 10^{-3}$. Then the optimal value of Problem~\eqref{lp:pK2} is strictly smaller than that of Problem~\eqref{Kmeans1}.
\end{proposition}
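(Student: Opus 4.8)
The plan is to reduce the five-ball case to the already-established five-point case via a continuity/perturbation argument, using Lemma~\ref{samecenters} to control the K-means optimal value and an explicit feasible LP solution to control the LP optimal value. First I would record that for the exact five-point input we have $f^*_{\rm LP}\le \hat f_{\rm LP}=\frac{54}{28}<2<\frac{146}{72}=f^*_{\rm Kmeans}$, so there is a strict gap $\delta_0:=f^*_{\rm Kmeans}-\hat f_{\rm LP}>0$. The goal is then to show that when each point $x^p$ is replaced by an arbitrary point $y^{i,p}$ in the closed ball $\bar B(x^p,r)$ with $r\le 3\times 10^{-3}$, both quantities move by less than $\delta_0/2$, so the strict inequality $f^*_{\rm LP}<f^*_{\rm Kmeans}$ persists.

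For the LP side, I would exhibit an explicit feasible point for the perturbed instance by ``copying'' $\tilde X$ blockwise: assign to the $n'$ copies near $x^p$ and the $n'$ copies near $x^q$ a matrix built from $\tilde X_{pq}$ suitably normalized so that row sums are $1$, trace is $2$, and inequalities~\eqref{e3k2}--\eqref{e4k2} hold. The natural candidate is the matrix $X$ whose $(i,p),(j,q)$ entry is $\tilde X_{pq}$ scaled down by $n'$ off the diagonal blocks and handled analogously on the diagonal blocks; one checks it is feasible exactly because $\tilde X$ is (the constraints~\eqref{e1k2}--\eqref{e4k2} are preserved under this ``blow-up'' since they are linear and $\tilde X$ satisfies the $n=5$ versions). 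Then the LP objective at this point is $\sum_{i,j} d_{ij}X_{ij}$, which differs from $\sum_{p,q}d^{\rm five}_{pq}\tilde X_{pq}=\hat f_{\rm LP}$ by at most a bounded multiple of $r$ (each perturbed squared distance $\|y^{i,p}-y^{j,q}\|^2$ differs from $d^{\rm five}_{pq}$ by $O(r)$ since all points lie in a bounded region, and $X_{ij}\ge0$ with fixed total mass $\sum X_{ij}=n$). This gives $f^*_{\rm LP}\le \hat f_{\rm LP}+Cr$ for an explicit constant $C$.

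For the K-means side I would use Lemma~\ref{samecenters} in reverse together with a lower bound: I want to show $f^*_{\rm Kmeans}$ for the perturbed instance is at least $\frac{146}{72}-C'r$. The cleanest route is to note that any assignment of the $5n'$ perturbed points to two clusters induces, by majority vote within each of the five balls (or simply by looking at which of the five ``sites'' each cluster predominantly occupies), a $2$-clustering of the five sites; the cost of the perturbed clustering is within $O(r)$ of $n'$ times the cost of the corresponding site-clustering plus nonnegative within-ball variance terms, and the site-clustering cost is at least $f^*_{\rm Kmeans}=\frac{146}{72}$ after dividing by $n'$. Making this precise requires care: one should compare cluster centers, which are averages of the chosen points, to the centers $x^p$ they cluster around, and bound $\|\text{center}-\text{site average}\|$ by $r$. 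Alternatively, and perhaps more robustly, I would invoke continuity of the optimal value of a finite combinatorial minimization: $f^*_{\rm Kmeans}$ as a function of the point locations is a minimum over finitely many partitions of continuous (indeed smooth) functions, hence continuous, so it is within $\delta_0/2$ of $\frac{146}{72}$ once $r$ is small enough; the same holds for the LP value as a min over a compact polytope of a continuous linear objective with continuously varying coefficients. The explicit bound $r\le 3\times10^{-3}$ would then come from tracking the Lipschitz constants in both estimates.

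The main obstacle will be making the K-means lower bound quantitative with the stated constant $r\le 3\times 10^{-3}$: unlike the LP side, where I have an explicit feasible matrix and linear dependence on the data, the K-means value is a minimum and I must rule out that some clustering which is far from optimal for the five-point input becomes optimal after perturbation. The key observation that saves this is that the gap $\delta_0$ is not tiny (roughly $\frac{146}{72}-\frac{54}{28}\approx 0.10$) while the Lipschitz constants are modest (all points lie within a ball of radius about $1$ of the origin, so squared distances are Lipschitz with a small constant in $r$), so a crude but honest bound comfortably accommodates $r\le 3\times10^{-3}$. I would therefore present the argument as: (i) exhibit the blown-up feasible LP point and bound $f^*_{\rm LP}\le \frac{54}{28}+C_1 r$; (ii) bound $f^*_{\rm Kmeans}\ge \frac{146}{72}-C_2 r$ via the majority-vote reduction and center-perturbation estimate (or via the finite-min continuity argument with explicit constants); (iii) check that $C_1 r + C_2 r < \frac{146}{72}-\frac{54}{28}$ for $r\le 3\times10^{-3}$, concluding $f^*_{\rm LP}<f^*_{\rm Kmeans}$ and hence non-tightness.
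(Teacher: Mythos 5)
Your overall strategy is correct and matches the paper's: bound the LP value from above by the blown-up copy of $\tilde X$ (which is exactly the matrix $\hat X$ the paper constructs, with entries $\tilde X_{pq}/n'$ on the appropriate blocks), bound the K-means value from below by a perturbation of the $r=0$ case, and check that the two error terms fit inside the gap $\frac{146}{72}-\frac{54}{28}$. The one place where the paper's route is cleaner than your primary plan is the K-means lower bound. You propose a majority-vote reduction with a comparison of cluster centers to site averages, and correctly flag this as the delicate step; the paper avoids it entirely. It first uses Lemma~\ref{samecenters} to compute $g^*_{\rm Kmeans}(0)$ \emph{exactly} as $n' f^*_{\rm Kmeans}=\frac{146}{72}n'$ (the $r=0$ five-ball input is $n'$ identical copies of the five-point input, all with the same optimal centers), and then observes that for any $r>0$ one has $\sqrt{d^r_{ij}}\geq\sqrt{d^0_{ij}}-2r$ and $d^0_{ij}\leq 1$, hence $d^r_{ij}\geq d^0_{ij}-4r$; since every partition matrix satisfies $\sum_{i,j}X_{ij}=n=5n'$, this gives the uniform bound $\sum d^r_{ij}X_{ij}\geq\sum d^0_{ij}X_{ij}-20rn'$ over \emph{all} partition matrices simultaneously, which is precisely the quantitative version of your ``finite min of continuous functions'' alternative and disposes of your worry that a far-from-optimal clustering could become optimal after perturbation. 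So: same architecture, but if you execute the majority-vote version you will do considerably more work than needed; the linear-in-the-data structure of the objective plus the fixed row sums is what makes the Lipschitz constant explicit and small.
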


\begin{proof}
We denote by $\B_p$ the index set of points located in the $p$th ball; without loss of generality, let $\B_p = \{(p-1)n'+1,\cdots,pn'\}$ for all $p\in \{1,\cdots,5\}$. 
Let us denote these points by $y^j$, $j \in \B_p$, $p \in \{1,\cdots,5\}$.
We start by computing an upper bound on the optimal value of the LP relaxation.
Consider the matrix $\hat X$ defined as follows:
\begin{align*}
& \hat X_{ij} = \frac{6}{14 n'}, \quad \forall i, j \in \B_p, \; p \in \{1,2,3\}\\
& \hat X_{ij} = \frac{5}{14 n'}, \quad \forall i, j \in \B_p, \; p \in \{4,5\}\\
& \hat X_{ij} = \frac{1}{14 n'}, \quad \forall i \in \B_p, \; j \in \B_{p'}, \; p \neq p' \in \{1,2,3\}\\
& \hat X_{ij} = \frac{3}{14 n'}, \quad \forall i \in \B_p, \; j \in \B_{p'}, \; p \in \{1,2,3\}, \; p' \in \{4,5\}\\
& \hat X_{ij} = 0, \quad \forall i \in \B_p, \; j \in \B_{p'}, \; p \neq p' \in \{4,5\}.
\end{align*}
Clearly, $\hat X$ is not a partition matrix.
First we show that $\hat X$ is feasible for Problem~\eqref{lp:pK2}.
The equality constraint~\eqref{e1k2} is satisfied as we have ${\rm Tr}(\hat X) = \sum_{i \in [n]}{\hat X_{ii}} = 3n' (\frac{6}{14 n'})+2n' (\frac{5}{14 n'}) = 2$. Equalities~\eqref{e2k2} are also satisfied since we have $\sum_{j=1}^{n}{\hat X_{ij}}= n'(\frac{6}{14 n'})+2n'(\frac{1}{14 n'})+2n'(\frac{3}{14 n'})=1$ for all $i \in \B_p, p \in \{1,2,3\}$, and $\sum_{j=1}^{n}{\hat X_{ij}}= 3n'(\frac{3}{14 n'})+n'(\frac{5}{14 n'})+n'(0)=1$ for all $i \in \B_p, p \in \{4,5\}$.
Hence it remains to check the validity of inequalities~\eqref{e3k2}:
first note that if $i \in \B_p$, $j \in \B_{p'}$, $k \in \B_{p''}$ for $p \neq p' \neq p''$, then the validity of  $\hat X_{ij}+ \hat X_{ik} \leq \hat X_{ii} + \hat X_{jk}$ follows from the validity of inequalities~~\eqref{e3k2} at $\tilde X$ defined by~\eqref{lilX}. If $i,j,k \in \B_p$, $p \in \{1,2,3\}$, then
$\hat X_{ij}+ \hat X_{ik} = \frac{6}{14 n'}+\frac{6}{14 n'}\leq  \frac{6}{14 n'}+\frac{6}{14 n'}=\hat X_{ii} + \hat X_{jk}$  for all $i \neq j \neq k$, $j < k$. If $i,j,k \in \B_p$, $p \in \{4,5\}$, then 
$\hat X_{ij}+ \hat X_{ik} = \frac{5}{14 n'}+\frac{5}{14 n'}\leq  \frac{5}{14 n'}+\frac{5}{14 n'}=\hat X_{ii} + \hat X_{jk}$  for all $i \neq j \neq k$, $j < k$. The remaining cases, \ie when two of the points are in the same ball, while the third point is in a different ball, can be checked in a similar fashion.

Denote by $d^r_{ij}$ , $1\leq i < j \leq n$, the vector of squared pairwise distances. We have: 
\begin{align*}
&d^r_{ij} \leq 4 r^2, \quad \forall i,j \in \B_p, p \in \{1,\ldots,5\},\\
&d^r_{ij} \leq (1+2r)^2, \quad  \forall i\in \B_p, j \in \B_{p'}, \forall p \neq p' \in \{1,2,3\}, \; {\rm or} \; p \neq p' \in \{4,5\},\\ 
&d^r_{ij} \leq \Big(\sqrt{\frac{7}{12}}+2 r\Big)^2, \quad \forall i\in \B_p, j \in \B_{p'}, \forall p \in \{1,2,3\}, p' \in \{4,5\}.
\end{align*}
Hence, an upper bound $\hat g_{\rm LP}(r)$ on the optimal value of Problem~\eqref{lp:pK2} is given by
\begin{align}
g^{*}_{\rm LP}(r)&\leq 6 \binom{n'}{2} \frac{6}{14 n'} (4 r^2) +  4 \binom{n'}{2} \frac{5}{14 n'} (4 r^2)+ 6 n'^2 \frac{1}{14 n'} (1+2r)^2
 + 12 n'^2  \frac{3}{14 n'} \Big(\sqrt{\frac{7}{12}}+2 r\Big)^2 \nonumber\\
 &= n'\Big(\frac{54}{28}+20r^2+12(\frac{\sqrt{21}+1}{7})r-8\frac{r^2}{n'}\Big)\leq \Big(\frac{54}{28}+10r\Big)n':= \hat g_{\rm LP}(r),\label{ublp}
\end{align}
where the last inequality follows since by assumption $r \leq 3 \times 10^{-3}$, and
where $g^*_{\rm LP}(r)$ denotes the optimal value of Problem~\eqref{lp:pK2} for the five-ball input.

Denote by $g^*_{\rm Kmeans}(r)$ the optimal value of the K-means clustering problem~\eqref{Kmeans1} for the five-ball input.
Next, we obtain a lower bound on $g^*_{\rm Kmeans}(r)$, and show that this value is strictly larger than $\hat g_{\rm LP}(r)$ defined by~\eqref{ublp} for any $0 \leq r \leq 3 \times 10^{-3}$, implying that Problem~\eqref{lp:pK2} is not tight.
To this end,  first consider the case with $r=0$. Define the set of points $Y^l:=\{y^l, y^{n'+l}, y^{2n'+l}, y^{3n'+l}, y^{4n'+l}\}$
for all $l \in [n']$. Notice that since $r=0$, we have 
$Y^l = Y^{l'}$ for all $l, l' \in [n']$. For each $l \in [n']$, consider the K-means clustering problem with $K=2$ for five input points $Y^l$, and denote the optimal cluster centers by $(c^l_1, c^l_2)$. We then have $(c^l_1, c^l_2) = (c_1, c_2)$ for all $l \in [n']$, where $(c_1, c_2)$ denotes the optimal cluster centers for the five-point input~\eqref{centers}. Therefore, by Lemma~\ref{samecenters}, we conclude that an optimal cluster centers for the five-ball input with $r=0$, coincides with an optimal cluster centers for the five-point input. This in turn implies that 
\begin{equation}\label{add1}
g^*_{\rm Kmeans}(0) = n' f^*_{\rm Kmeans}  = \frac{146}{72} n'.
\end{equation}
%
Now let us consider the five-ball input for some $r > 0$.
We observe that 
$\sqrt{d^r_{ij}} \geq \sqrt{d^0_{ij}}-2 r$,  for all  $1 \leq i < j \leq n$.
 Since $d^0_{ij}\leq 1$ for all $1 \leq i < j \leq n$, we have
$
d^r_{ij} \geq d^0_{ij}-4 r$  for all $1 \leq i < j \leq n$.
Therefore, for every partition matrix $X$ we compute
$$\sum_{i,j \in [n]} {d^r_{ij} X_{ij}}\geq \sum_{i,j \in [n]} {d^0_{ij} X_{ij}} -4 r\sum_{i,j \in [n]} {X_{ij}}=\sum_{i,j \in [n]} {d^0_{ij} X_{ij}} -20 rn',$$
where the last equality follows from constraints~\eqref{e2k2}.
Hence, taking the minimum over all partition matrices on both sides and using~\eqref{add1}, we deduce that 
\begin{equation}\label{lowerbound}
g^*_{\rm Kmeans}(r) \geq g^*_{\rm Kmeans}(0)-20 rn'=\Big(\frac{146}{72}-20r\Big)n'.
\end{equation} 
From~\eqref{ublp} and~\eqref{lowerbound} it follows that by choosing 
$$r< \frac{\frac{146}{72}-\frac{54}{28}}{30},$$ 
we get $g^{*}_{\rm LP}(r) < g^*_{\rm Kmeans}(r)$ and this completes the proof.
\end{proof}

As a direct consequence, we find that the LP relaxation is not tight for a variant of the SBM:

\begin{corollary}\label{rem-tight}
Let $r \leq 3 \times 10^{-3}$. Suppose that the points are generated in $\R^m$, for some $m\geq 3$ according to any generative model consisting of five measures $\mu_p$, $p=1,\dots,5$, each supported in the ball $B((x^p,0,\dots,0),r)$, where $x^p$ is defined by~\eqref{centers}.
Then Problem~\eqref{lp:pK2} is not tight. 
\end{corollary}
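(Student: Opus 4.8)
The plan is to observe that every realization of the described generative model is, up to a distance-preserving embedding, an instance of the five-ball input, so that the statement reduces immediately to Proposition~\ref{non-tightness}. Writing $n'\ge 1$ for the common number of points drawn from each measure (so that $n=5n'$), the support assumption on $\mu_p$ forces every sampled point associated with $\mu_p$ to lie in the closed ball $B((x^p,0,\dots,0),r)$; hence the point set consists of exactly $n'$ points inside each of the five balls of radius $r$ centered at the (embedded copies of the) points $x^1,\dots,x^5$ of~\eqref{centers}. This is precisely the configuration called the five-ball input.

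Next I would record that nothing changes when the ambient dimension grows from $3$ to $m\ge 3$: appending zero coordinates embeds $\R^3$ isometrically into $\R^m$, so all squared pairwise distances $d^r_{ij}$ are unchanged, and therefore so are the feasible region and objective of Problem~\eqref{lp:pK2} and the optimal value of Problem~\eqref{Kmeans1}. In fact the proof of Proposition~\ref{non-tightness} only ever uses the elementary bounds on the $d^r_{ij}$ and never the value of $m$, so it goes through verbatim here. Since $r\le 3\times 10^{-3}$, Proposition~\ref{non-tightness} then gives that the optimal value of Problem~\eqref{lp:pK2} is strictly smaller than that of Problem~\eqref{Kmeans1}, i.e.\ the LP relaxation is not tight, which is the assertion of the corollary.

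The one point deserving care --- and the closest thing to an obstacle in an otherwise immediate argument --- is the requirement that the model contribute the same number $n'$ of points to each ball. This balance is what makes the reduction to a five-ball input exact: it is used in the proof of Proposition~\ref{non-tightness} to build the feasible matrix $\hat X$ from the balanced $5\times5$ matrix $\tilde X$ of~\eqref{lilX}, and with unequal ball sizes the natural candidate for $\hat X$ would fail either to be symmetric or to have unit row sums, so the clean reduction would break down. Under the stated hypotheses this is a non-issue, and the corollary follows.
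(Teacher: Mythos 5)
Your proposal is correct and matches the paper's (implicit) argument: the paper offers no separate proof and simply presents the corollary as an immediate consequence of Proposition~\ref{non-tightness}, which is exactly the reduction you carry out. Your two supporting observations --- that the proof of Proposition~\ref{non-tightness} only uses the bounds $\sqrt{d^0_{ij}}-2r \le \sqrt{d^r_{ij}} \le \sqrt{d^0_{ij}}+2r$ (valid in any ambient dimension $m\ge 3$), and that the reduction needs equal numbers of points per ball to reuse the matrix $\tilde X$ of~\eqref{lilX} --- are both accurate and consistent with the paper's intended reading of the generative model.
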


Recall that in all our previous numerical experiments with synthetic and real-world data sets, the LP relaxation outperforms the SDP relaxation. That is, the optimal value of the LP is always at least as large as that of the SDP. Hence, one wonders whether such a property can be proved in a general setting. Interestingly, the stochastic model defined in Corollary~\ref{rem-tight} provides the first counterexample, which we illustrate via a numerical experiment. We consider the stochastic model defined in Corollary~\ref{rem-tight}, where we assume the points supported by each ball are sampled from a uniform distribution. We set $m=3$ and generate $n'=20$ points in each of the five balls to get a total of $n=100$ points. Moreover we set ball radii $r \in [0.0: 0.01: 0.5]$ and for each fixed $r$ we generate 20 random instances. Our results are depicted in Figure~\ref{figure4}, where as before we compare the LP relaxation with the SDP relaxation, with respect to their tightness rate and average relative gap. Recall that the relative gap is defined as
$g_{\rm rel} = \frac{f_{\rm LP}-f_{\rm SDP}}{f_{\rm LP}} \times 100$, where $f_{\rm LP}$ and $f_{\rm SDP}$ denote the optimal values of the LP and the SDP, respectively. That is, a negative relative gap means that the SDP relaxation is stronger than the LP relaxation. As can be seen from the figure, while the SDP is never tight, for $r \lesssim 0.15$, we often have $f_{\rm SDP} > f_{\rm LP}$.

\begin{figure}[htbp]
 \centering
  \subfigure{\epsfig{figure=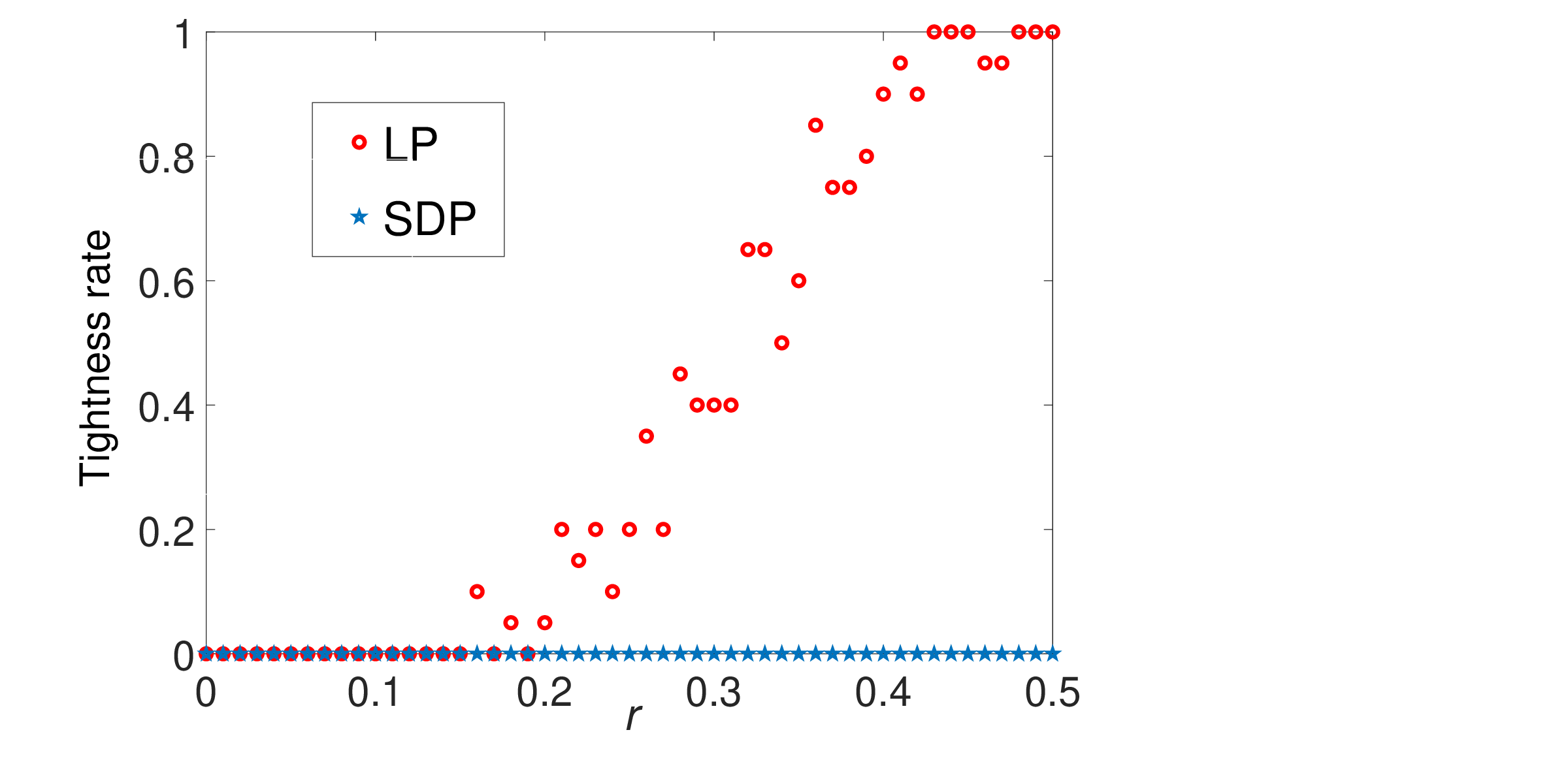, scale=0.21, trim=20mm 0mm 120mm 0mm, clip}
  }
 \subfigure{\epsfig{figure=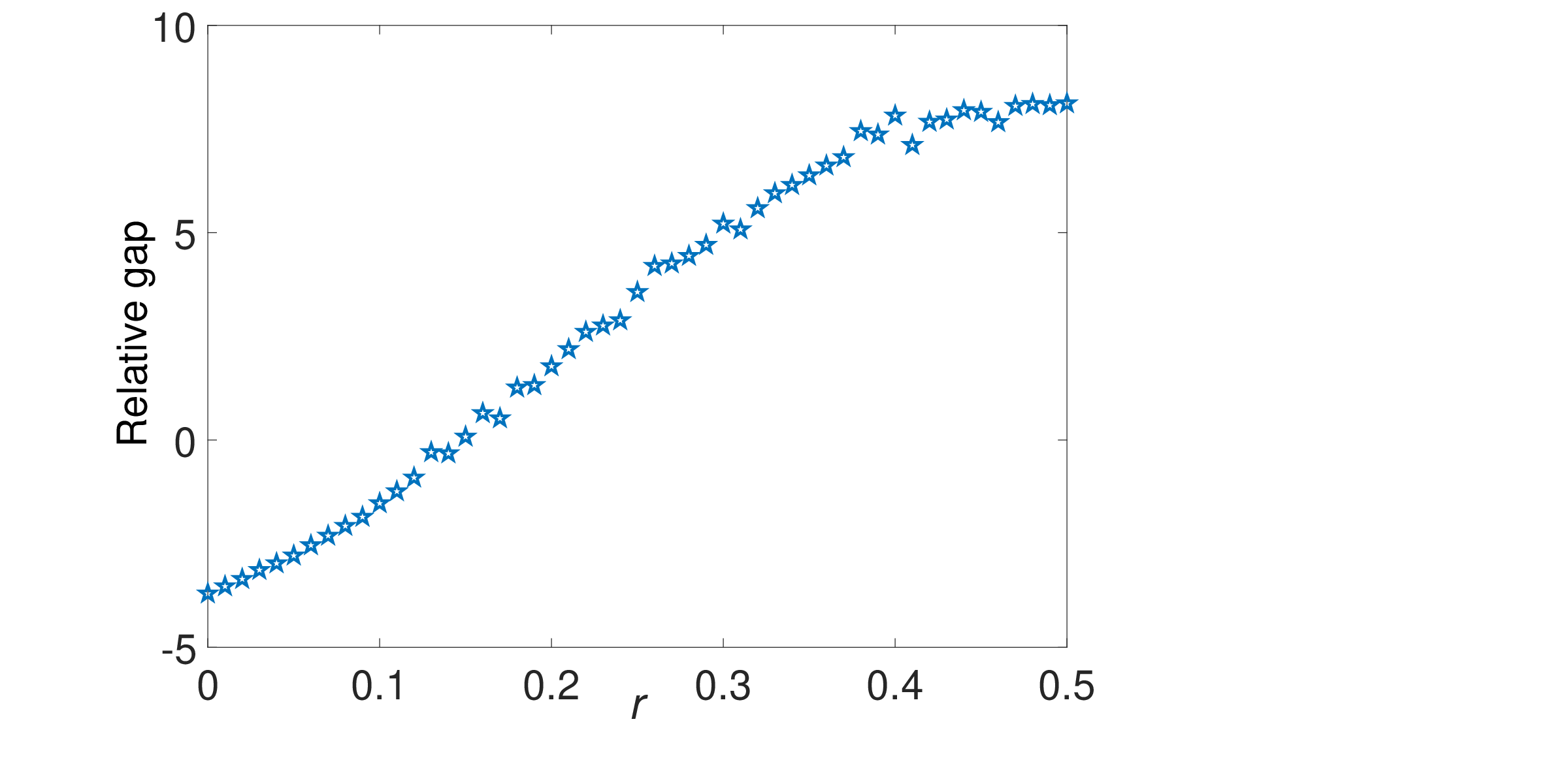, scale=0.21, trim=20mm 0mm 120mm 0mm, clip}}
  \caption{Comparing the strength of the LP versus the SDP for K-means clustering when the input is generated according to the stochastic five-ball input with $n=100$ and $m=3$.
}
\label{figure4}
\end{figure}

We conclude this section by noting that the family of examples considered above are very special and our numerical experiments with real-world data sets suggest that such special configurations do not appear in practice. Hence, it is highly plausible that one can establish the tightness of the LP relaxation under a fairly general family of inputs. We leave this as an open question.

\section{A customized algorithm for solving the LP relaxation}
\label{sec:numerics}
As we detailed in the previous section, Problem~\eqref{lp:pK} contains $\Theta(n^{t+1})$ inequalities of the form~\eqref{eq3k}. This in turn implies that even in the cheapest case; \ie $t=2$, the computational cost of solving the LP relaxation quickly increases making it impractical for many real-world applications. To further illustrate this fact, we consider three state-of-the-art LP solvers: (1)
{\tt Gurobi 11.02}~\cite{gurobi}, we use the interior-point algorithm and turn off the {\tt crossover} operator to expedite the solver, (2) {\tt PDLP 9.10}~\cite{applegate2021practical}, a primal-dual 
first-order method for solving large-scale LPs that is implemented in Google's OR-Tools,  
(3) {\tt cuPDLP-C}~\cite{lu2023cupdlpC}, an improved implementation of  {\tt cuPDLP} in the {\tt C} language, where
{\tt cuPDLP}~\cite{lu2023cupdlp} is a GPU implementation of {\tt PDLP} and is written in {\tt Julia}.

We first show that even for relatively small data sets \ie $n < 500$, none of these three solvers are able to solve Problem~\eqref{lp:pK} within $4$ hours. Again, we consider a number of data sets from the UCI machine learning repository~\cite{dua2017uci}. The data sets and their characteristics, \ie the number of points $n$ and the number of clusters $K$ are listed in columns 1-3 of Table~\ref{table2}. We solve Problem~\eqref{lp:pK} with $t=2$ using the three LP solvers. 
The size of each LP, \ie the number of variables $n_v$, the number of constraints $n_e$, and the number of nonzeros in the constraint matrix $n_z$  are listed in columns 4-6 of Table~\ref{table2}. Henceforth, all experiments are conducted on {\tt Google Colab} using an Intel(R) Xeon(R) CPU @ 2.20GHz with 8 cores and 50 GB of RAM. The GPU used is an NVIDIA L4 with 22.5 GB of RAM. 
The total time in seconds of solving the LP using each solver is listed in the last three columns of Table~\ref{table2}. As can be seen from this table, for all problems that fit in the GPU memory, {\tt cuPDLP-C} significantly outperforms other solvers. Nonetheless, for problems with $n \geq 400$, the LP becomes excessively expensive for all solvers. It is interesting to note that with the exception of {\tt glass} and {\tt WholeSale} (with $K=6$) data sets, for all instances, the LP relaxation is tight; that is, it returns a partition matrix upon termination. As we detail shortly, for {\tt glass} and {\tt WholeSale} data sets, Problem~\eqref{lp:pK} is tight if $t=3$  and $t=6$, respectively.

\begin{table}[htb]
\caption{Execution time (secs) for solving Problem~\eqref{lp:pK} with $t=2$ using different LP solvers for small data sets from UCI.}
\label{table2}
\small
\centering
\begin{threeparttable}
\begin{tabular}{|ccc|ccc|ccc|}
\hline
Data set & $n$ & $K$ & $n_v$ & $n_e$ & $n_z$ & {\tt cuPDLP-C} & {\tt PDLP} & {\tt Gurobi} \\ \hline
ruspini & $75$ & $4$ & $2850$ & $2.03\times 10^{5}$ & $8.16\times 10^{5}$ & $1.0$ & $4.0$ & $3.9$ \\
iris & $150$ & 2 & $11325$ & $1.65\times 10^{6}$ & $6.64\times 10^6$ & $5.6$ & $31.0$ & $33.4$ \\
 &  & $3$ &  &  &  & $7.9$ & $46.6$ & $45.1$ \\
 &  & $4$ &  &  &  & $7.2$ & $41.1$ & $59.8$ \\
wine & $178$ & $2$ & $15931$ & $2.77\times 10^6$ & $1.11\times 10^7$ & $14.4$ & $73.7$ & $79.4$ \\
 &  & $7$ &  &  &  & $20.0$ & $98.9$ & $195.8$ \\
gr202 & $202$ & $6$ & $20503$ & $4.06\times 10^6$ & $1.63\times 10^7$ & $131.9$ & $403.1$ & $868.7$ \\
seeds & $210$ & $2$ & $22155$ & $4.56\times 10^6$ & $1.83\times 10^7$ & $24.3$ & $207.2$ & $163.1$ \\
 &  & $3$ &  &  &  & $19.3$ & $115.6$ & $200.6$ \\
glass & $214$ & $3$\tnote{1} & $23005$ & $4.83\times 10^6$ & $1.94\times 10^7$ & $116.2$ & $845.1$ & $651.0$ \\
 &  & $6$\tnote{1} &  &  &  & $168.3$ & $864.3$ & $1394.6$ \\
CatsDogs & $328$ & $2$ & $53956$ & $1.75\times 10^7$ & $7.00\times 10^7$ & $1245.8$ & $2894.2$ & $1104.2$ \\
accent & $329$ & $2$ & $54285$ & $1.76\times 10^7$ & $7.07\times 10^7$ & $148.9$ & $713.8$ & $1395.9$ \\
 &  & $6$ &  &  &  & $271.0$ & $982.7$ & $8401.9$ \\
ecoli & $336$ & $3$ & $56616$ & $1.88\times 10^{7}$ & $7.53\times 10^{7}$ & $129.7$ & $547.3$ & $5729.5$ \\
RealEstate & $414$ & $3$ & $85905$ & $3.52\times 10^7$ & $1.41\times 10^8$ & N/A\tnote{2} & $3974.6$ & $>14400$ \\
 &  & $5$ &  &  &  & N/A & $5708.4$ &  $>14400$ \\
wholesale & $440$ & $5$ & $97020$ & $4.23\times 10^7$ & $1.69\times 10^8$ & N/A & $2944.4$ & N/A \\
 &  & $6$\tnote{1} &  &  &  & N/A & $>14400$ & N/A \\ \hline
\end{tabular}
\begin{tablenotes}
\item[1] Problem~\eqref{lp:pK} with $t=2$ is not tight. 
\item[2] Exceeds the maximum memory of the machine.
\end{tablenotes}
\end{threeparttable}
\end{table}

We next present a cutting-plane based algorithm that enables us to solve Problem~\eqref{lp:pK} for significantly larger data sets. 
In the following, we describe the main ingredients of our algorithm\footnote{The source code as well as the data sets are available at {\tt https://github.com/Yakun1125/cutLPK/tree/main}}.

\subsection{Initialization}
Llyod's algorithm and its enhanced implementations such as {\tt k-means++} are extremely fast and are often successful in finding high quality solutions for the K-means clustering problem. In our framework, we make use of {\tt k-means++}'s solution at the initialization step. First, the cost associated with this solution serves as an (often very good) upper bound on the optimal value of Problem~\eqref{lp:pK}. Second, to construct the first LP, we select inequalities~\eqref{eq3k} with $t=2$ that are satisfied tightly at the {\tt k-means++}'s solution. 
For large scale data sets, however, the number of inequalities~\eqref{eq3k} with $t=2$ that are satisfied tightly at the {\tt k-means++}'s solution might be too large; in those cases, we randomly select a subset of at most $p_{\rm init}$ of such inequalities. 
 
\subsection{Safe lower bounds}
To further expedite the cutting plane algorithm for large scale problems, in the first few iterations, we solve LPs with a larger feasibility tolerance and terminate early, if needed. In such cases the solution returned by the LP solver is often infeasible. To generate valid lower bounds on the optimal value of Problem~\eqref{lp:pK}, we make use of a technique that has been used in mixed-integer programming solvers~\cite{neumaier2004safe}. Consider the following pair of primal-dual LPs:
\begin{align*}
    \min \quad & c^T x \qquad  &\max\quad  & b^T y\nonumber\\
    \st \quad  &Ax = b, \; Q x \leq 0  \qquad
     &\st \quad & A^T y + Q^T z \leq c \nonumber \\
     & x \geq 0,  & & z\leq 0.\nonumber
\end{align*}
Denote by $(\hat x, \hat y, \hat z)$ an infeasible primal-dual pair returned by the LP solver satisfying $\hat z \leq 0$.
Define the dual residual $r$ as
$r := \max\{A^T \hat{y} + Q^T \hat{z} - c, 0\}$. We then have $c \geq A^T \hat{y} + Q^T \hat{z} - r$. Let $x^*$ denote an optimal solution for the primal LP. Using $A x^* =b$, we get $c^T x^* \geq \hat{y}^T b + \hat{z}^T Q x^* - r^T x^*$.
From $Q x^*\leq 0$ and $\hat{z}\leq 0$, we deduce that  $\hat{z}^T Q x^* \geq 0$ implying that $c^T x^* \geq \hat{y}^T b - r^T x^*$. Denote by $\bar x$ an upper bound on $x^*$. We then obtain the following ``safe lower bound'' on the optimal value of the primal LP: 
$$c^T x^* \geq \hat{y}^T b - r^T \bar{x}.$$

\subsection{Rounding}
Denote by $X_{lb}$ an optimal solution of the LP relaxation at an iteration of the cutting plane algorithm and suppose that $X_{lb}$ is not a partition matrix. It is then desirable to ``round'' this solution to obtain a partition matrix $X_{ub}$ whose cost may serve as a good upper bound on the optimal clustering cost. We make use of the rounding scheme proposed in~\cite{PenWei07}.


\subsection{The separation algorithm}
Recall that the key to the efficiency of a cutting plane algorithm to solve Problem~\eqref{lp:pK} is the efficient separation of inequalities~\eqref{eq3k}. That is, each intermediate LP should contain a small number of inequalities~\eqref{eq3k} that are as sparse as possible. Fix $i \in [n]$; we argue that if $t$ is part of the input, then the separation problem over inequalities~\eqref{eq3k} for fixed $i$ is NP-hard. To see this let us rewrite  inequalities~\eqref{eq3k} as follows:
$$\sum_{j \in S} {X_{ij}} - \sum_{j, k \in S: j < k} {X_{jk}}\leq X_{ii}, \quad S \subseteq [n] \setminus \{i\}: 2\leq |S|\leq t. 
$$
For a fixed $i$, the above inequalities are in the form of clique inequalities introduced by Padberg~\cite{Pad89}. It is well-known that separating over these inequalities is strongly NP-hard, being equivalent to maximum edge-weight clique problem. In the following we fix $t \in \{2,\ldots, K\}$ and propose an efficient heuristic separation scheme for inequalities~\eqref{eq3k}. Let $\tilde X$ be the solution to the current LP. 
For each $i \in [n]$, the separation problem is 
 to find a nonempty subset $S\subseteq[n] \setminus \{i\}$ with $2 \leq |S| \leq t$ such that
$$
w_i(S) := \sum_{j \in S} \tilde X_{i j} -\sum_{j, k \in S: j<k} \tilde X_{j k}>\tilde X_{i i},
$$
or to prove that no such subset exists. Repeated calculations can be avoided by using the relation:
$
w_i(S\cup\{k\}) = w_i(S) + \tilde X_{ik}-\sum_{v\in S}\tilde X_{vk}
$,
where we use the identity $\tilde X_{ij} = \tilde X_{ji}$ for all $i < j$.
However, this enumeration scheme is too expensive for large scale problems. To this end, we utilize the greedy strategy proposed in~\cite{marzi2019computational} to separate clique inequalities.  


\begin{algorithm}[htb]
\SetKwFunction{sep}{separate}
\SetAlgorithmName{\sep}{\sep}{}
\caption{The heuristic algorithm for separating inequalities~\eqref{eq3k}}
\KwIn{LP solution $\tilde X$, violation tolerance $\epsilon_{\rm vio}$, and $t_{\max}$}
\KwOut{A number of inequalities of the form~\eqref{eq3k} violated at $\tilde X$.}
            \For{$i \in [n]$ \textbf{in parallel}}{
                \For{$j \in [n] \setminus \{i\}$}{          
                    Initialize $S = \{j\}$, $w_i(S) = \tilde X_{ij}$, and $c = j$  \\             
                    \While{$|S| < t_{\max}$}
                        {select $k \in \{c + 1, \dots, n\} \setminus (S \cup \{i\})$ that maximizes $\gamma_i(k) = \tilde X_{ik}-\sum_{l\in S,l<k} \tilde X_{lk}$.\\
                        
                         Update $\bar{S} = S\cup\{k\} $, and $c = k$ \\
                        Compute $w_i(\bar{S}) = w_i(S) +\gamma_i(k)$. \\                      
                        \If{$ w_i(\bar{S}) > \tilde X_{ii} + \epsilon_{\rm vio} $}{
                        Add $ \sum_{k \in \bar S} {X_{ik}} \leq X_{ii} + \sum_{l, k \in \bar S: l < k} {X_{lk}}$
                        to the system of violated inequalities.
                        }
                        Update $S = \bar{S}$
                    }
                }
            }
\end{algorithm}

\vspace{0.1in}

An outline of the separation scheme is provided in Algorithm~\sep.
Clearly, this algorithm is not exact, as greedy expansion of $S$ may lead to the exclusion of some violated inequalities. 
However, in our numerical experiments, for all the problems, Algorithm~\sep
was enough to close the optimality gap $r_g$. To separate inequalities~\eqref{eq3k}, we run $n$ separation problems, each for a fixed $i \in [n]$, in parallel. We make use of {\tt OpenMP}~\cite{openmp} to do the parallel computations.

\begin{algorithm}[htb]
\SetKwFunction{cut}{Iterative LP solver}
\SetAlgorithmName{\cut}{\cut}{}
\KwIn{Data points $\{x^i\}_{i=1}^n$, number of clusters $K$, optimality tolerance $\epsilon_{\rm opt}$, initial number of inequalities $p_{\rm init}$, number of inequalities added at each round of cut generation $p_{\max}$, and the solver time limit for each intermediate LP $T$.}
\KwOut{Partition matrix $X_{ub}$ and optimality gap $r_g$.}
\textbf{Initialize:} 
    Set the lower bound $f_{lb} = -\infty$ and the optimality gap $r_g = +\infty$.
    Run {\tt k-means++} to get a partition matrix $X_{ub}$.
    Set the upper bound $f_{ub}$ as the cost of $X_{ub}$.    
    Randomly select at most $p_{\rm init}$ of inequalities~\eqref{eq3k} with $t = 2$ that are active at $X_{ub}$ in the LP. Set $t_{\max} = 2$.

\While{there exists a violated inequality of form~\eqref{eq3k},}{
    Solve the LP to obtain a safe lower bound $\bar{f}_{lb}$ and an optimal solution $X_{lb}$. \\
    \If{$\bar{f}_{lb}>f_{lb}$,}
       {Update $f_{lb} = \bar{f}_{lb}$}
    Round $X_{lb}$ and get a partition matrix $\bar{X}_{ub}$ with cost $\bar{f}_{ub}$.\\
    \If{$\bar{f}_{ub} < f_{ub}$,}
        {Update $X_{ub} = \bar{X}_{ub}$ and
        $f_{ub} = \bar{f}_{ub}$}
    Update $r_{g} = (f_{ub}-f_{lb})/f_{ub}$\\
    \If{$r_{g} \leq \epsilon_{\rm opt}$,}{Terminate}
    Remove from the LP, inequalities~\eqref{eq3k} that are not satisfied tightly at $X_{lb}$.\\
    Run~\sep to obtain 
    a number of inequalities of the form~\eqref{eq3k}
    with $t \leq t_{\max}$ that are violated at $X_{lb}$. 
    Add at most $p_{\max}$ of the most violated inequalities to the LP.\\
    
    \If{the number of violated inequalities returned by Algorithm~\sep is small,}
    {Update $t_{max} = \min\{K, t_{max} + 1\}$}
}
\label{alg:Cutting}
\caption{The cutting plane algorithm for solving Problem~\eqref{lp:pK}}
\end{algorithm}

An overview of iterative cutting-plane based algorithm is given in Algorithm~\cut.  Notice that in~\cut, the sparsity of the inequalities added to the LP is controlled by parameter $t_{\max}$, which is initially set to $t_{\max} = 2$, and is increased by one, only if the number of violated inequalities~\eqref{eq3k} with $t \leq t_{\max}$ that are found by Algorithm~\sep is below a certain threshold.

\subsection{Numerical Experiments}

We start by testing~\cut using our three LP solvers as before for small data sets from UCI. Results are shown in Table~\ref{tab:smalldata}. Notice that~\cut is not a deterministic algorithm; that is, at initialization,
if the total number of eligible inequalities exceeds $p_{\rm init}$, a random subset is selected and added to the LP. Hence, for a fair comparison, we run each instance $5$ times and report average time and average number of iterations along with standard deviations.
We set the optimality tolerance $\epsilon_{\rm opt} = 10^{-4}$. Results are summarized in Table~\ref{tab:smalldata}.
As can be seen from the table, even with the cut generation scheme, {\tt Gurobi} is unable to solve instances with more than $400$ data points within $4$ hours of CPU time.
On the other hand, both first-order method LP solvers solve all instances within the time limit. \emph{Surprisingly, in all cases, the LP relaxation is tight; \ie the optimality gap $r_g$ upon termination is smaller than the optimality tolerance $\epsilon_{\rm opt}$.} Moreover, {\tt cuPDLP-C} is on average $5.7$ times faster than {\tt PDLP}. As a byproduct, we can verify that out of $19$ instances, the SDP relaxation is tight for $2$ instances only. In addition, {\tt k-means++}'s solution is optimal for $12$ instances. 

\begin{table}[htb]
\caption{Average run time and average number of iterations of the cutting plane algorithm with different LP solvers for small data sets. The numbers in parentheses denote the standard deviations across five random trials. No parentheses implies identical results for all trials.}
\label{tab:smalldata}
\small
\centering
\begin{threeparttable}
\begin{tabular}{|ccc|cc|cc|cc|}
\hline
\multirow{2}{*}{Data set} & \multirow{2}{*}{$n$} & \multirow{2}{*}{$K$} & \multicolumn{2}{c|}{\tt cuPDLP-C} & \multicolumn{2}{c|}{\tt PDLP} & \multicolumn{2}{c|}{\tt Gurobi} \\ \cline{4-9} 
 &  &  & time(s) & iteration & time(s) & iteration & time(s) & iteration \\ \hline
ruspini & $75$ & $4$\tnote{*} & $0.5$ & $1.0$ & $3.0$ & $1.0$ & $1.1$ & $1.0$ \\
iris & $150$ & $2$ & $5.1$  & $1.0$  & $18.8$  & $1.0$  & $20.7$  & $1.0$  \\
 &  & $3$ & $4.4$  & $1.0$  & $23.3$  & $1.0$  & $23.8$  & $1.0$  \\
 &  & $4$ & $3.2$  & $1.0$  & $22.9$  & $1.0$  & $15.8$  & $1.0$  \\
wine & $178$ & $2$ & $11.5$  & $1.0$  & $42.8$  & $1.0$  & $47.2$  & $1.0$  \\
 &  & $7$\tnote{1} & $4.3$ & $1.0$ & $23.3\; (1.4)$ & $1.0$ & $23.1 \; (1.9)$ & $1.0$ \\
gr202 & $202$ & $6$\tnote{1} & $34.1 \; (1.5)$ & $1.0$ & $82.5 \; (12.7)$ & $1.0$ & $232.2 \; (31.5)$ & $1.0$ \\
seeds & $210$ & $2$\tnote{1} & $17.6 \; (0.2)$ & $1.0$ & $110.0 \; (3.6)$ & $1.0$ & $140.6 \; (11.3)$ & $1.0$ \\
 &  & $3$ & $10.2$  & $1.0$  & $61.2$  & $1.0$  & $143.4$  & $1.0$ \\
glass & $214$ & $3$\tnote{\dag} & $119.3$  & $3.0$  & $426.7$  & $2.0$  & $421.9$  & $1.0$  \\
 &  & $6$\tnote{1}\tnote{\dag} & $97.1\; (21.2)$ & $3.2 \;(0.5)$ & $541.4 \;(78.7)$ & $4.0 \;(1.4)$ & $952.7 \;(47.0)$ & $3.0$ \\
CatsDogs & $328$ & $2$\tnote{1} & $72.3 \;(31.0)$ & $1.0$ & $780.7 \;(328.7)$ & $1.2 \;(0.5)$ & $1844.3 \;(153.6)$ & $1.00$ \\
accent & $329$ & $2$\tnote{*} & $88.2 \; (6.4)$ & $1.0$ & $707.4 \; (52.0)$ & $1.0$ & $1617.1 \; (183.7)$ & $1.0$ \\
 &  & $6$\tnote{1} & $78.7 \; (19.5)$ & $1.0$ & $512.7 \; (36.5)$ & $1.0$ & $8443.7 \; (407.7)$ & $9.2 \;(0.5)$ \\
ecoli & $336$ & $3$ & $339.8 \; (267.8)$ & $1.8 \;(0.8)$ & $217.8 \; (19.2)$ & $1.0$ & $3202.2 \; (232.0)$ & $1.0$ \\
RealEstate & $414$ & $3$ & $73.2 \;(2.8)$ & $1.0$ & $641.7 \;(65.6)$ & $1.0$ & $>14400$ & N/A \\
 &  & $5$ & $130.3 \; (21.5)$ & $1.0$ & $1308.3 \; (348.6)$ & $1.8 \; (0.5)$ & $> 14400$ & N/A \\ 
Wholesale & $440$ & $5$\tnote{1} & $141.1 \; (9.4)$ & $1.0$ & $717.7 \; (23.8)$ & $1.0$ & $> 14400$ & N/A \\
 &  & $6$\tnote{1}\tnote{\dag} & $1106.6 \; (276.3)$ & $5.6 \; (2.1)$ & $4854.6 \; (1627.0)$ & $6.8 \; (0.8)$ & $ >14400$ & N/A \\ 
 \hline
\end{tabular}
\begin{tablenotes}
\item[1] {\tt k-means++}'s solution is not globally optimal for all five random trials.
\item[\dag] Problem~\eqref{lp:pK}  with $t=2$ is not tight.
\item[*] The SDP relaxation, \ie Problem~\eqref{SDP} is tight.
\end{tablenotes}
\end{threeparttable}
\end{table}

Next we consider larger data sets. We consider first-order LP solvers only, as {\tt Gurobi} is unable to solve any of the larger instances. Results are shown in Table~\ref{tab:bigdata}. While {\tt cuPDLP-C} solves all $32$ instances, {\tt PDLP} solves only $24$ instances within the time limit. Moreover, for the latter $24$ instances, {\tt cuPDLP-C} is on average $11.6$ times faster than {\tt PDLP}. \emph{Again, for all $32$ instances, the LP relaxation is tight.}  On the contrary, the SDP relaxation is tight only for $5$ instances. Moreover, the {\tt k-means++}'s solution is optimal for $22$ instances.

\begin{table}[htb]
\caption{Average run time and average number of iterations of the cutting plane algorithm  with {\tt PDLP} and {\tt cuPDLP-C} for large data sets. The numbers in parentheses denote the standard deviations across five random trials. No parentheses implies identical results for all trials.}
\label{tab:bigdata}
\small
\centering
\begin{threeparttable}
\begin{tabular}{|ccc|cc|cc|}
\hline
\multirow{2}{*}{Data set} & \multirow{2}{*}{$n$} & \multirow{2}{*}{$K$} & \multicolumn{2}{c|}{\tt cuPDLP-C} & \multicolumn{2}{c|}{\tt PDLP} \\ \cline{4-7} 
 &  &  & time(s) & iteration & time(s) & \multicolumn{1}{c|}{iteration} \\ \hline
ECG5000 & $500$ & $2$ & $120.7\; (15.7)$ & $1.0$ &  $484.3 \;(43.6)$ & $1.0$  \\
 &  & $5$\tnote{1}\tnote{\dag} & $581.7 \; (93.8)$ & $3.2 \; (0.5)$ &  $3727.6 \; (1115.2)$ & $5.4 \; (1.1)$  \\
Hungarian & $522$ & $2$ & $96.9 \; (11.3)$ & $1.0$ &   $599.8 \; (109.4)$ & $1.0$   \\
Wdbc & $569$ & $2$ & $88.9 \; (12.2)$ & $1.0$ &   $218.8 \; (457.7)$ & $1.6 \; (0.6)$  \\
 &  & $5$\tnote{1}  & $23.4 \;(20.1)$ & $1.0$ &  $1448.6 \; (437.7)$ & $1.8 \; (0.5)$ \\
Control & $600$ & $3$ & $93.9 \; (7.0)$ & $1.0$ &   $532.3 \; (41.6)$ & $1.0$ \\
Heartbeat & $606$ & $2$\tnote{*} & $158.1 \; (39.6)$ & $1.0$ &  $712.8 \; (31.4)$ & $1.0$ \\
Strawberry & $613$ & $2$ & $94.7 \; (5.5)$ & $1.0$   & $1039.7 \; (525.4)$ & $1.4 \; (0.6)$  \\
Energy & $768$ & $2$\tnote{*} & $50.3 \; (0.4)$ & $1.0$   & $169.5 \; (13.2)$ & $1.0$  \\
 &  & $12$\tnote{1}\tnote{*} & $158.5 \; (91.2)$ & $1.8 \; (1.1)$ & $869.5 \; (514.7)$ & $2.8 \; (2.1)$  \\
Gene & $801$ & $5$\tnote{1} & $211.2 \; (104.3)$ & $1.2 \; (0.5)$ &  $1104.7 \; (414.6)$ & $1.4 \; (0.6)$  \\
 &  & $6$\tnote{1} & $707.4 \; (111.2)$ & $3.0$ & $974.1 \; (340.4)$ & $1.2 \; (0.5)$  \\
SalesWeekly & $811$ & $2$\tnote{1} & $70.1 \; (3.9)$ & $1.0$ &  $1620.7 \; (118.1)$ & $2.0$  \\
 &  & $3$ & $65.3 \; (2.1)$ & $1.0$   & $565.1 \;(50.10)$ & $1.0$ \\
 &  & $5$ & $89.9 \; (2.7)$ & $1.0$ &  $1417.4 \; (433.1)$ & $1.8 \; (0.5)$ \\
Vehicle & $846$ & $2$ & $61.3 \; (1.5)$ & $1.0$ &  $866.8 \; (326.1)$ & $1.2 \; (0.5)$  \\
Arcene & $900$ & $2$ & $115.5 \; (5.7)$ & $1.0$ &  $273.6 \; (15.0)$ & $1.0$ \\
 &  & $3$\tnote{*} & $74.6 \; (3.4)$ & $1.0$ &  $292.1 \; (8.1)$ & $1.0$ \\
 &  & $5$\tnote{1}\tnote{\dag} & $759.7 \; (78.7)$ & $3.8 \; (0.5)$  & $ > 14400$ & N/A \\
Wafer & $1000$ & $2$ & $67.4 \; (1.5)$ & $1.0$ &  $316.9 \; (32.1)$ & $1.0$ \\
 &  & $4$\tnote{1}\tnote{\dag} & $643.1 \; (76.3)$ & $3.6 \; (0.6)$  & $4602.1 \; (1101.9)$ & $5.2 \; (1.1)$  \\
Power & $1096$ & $2$ & $95.9 \; (19.9)$ & $1.0$ &  $1543.2 \; (508.9)$ & $2.2 \; (0.8)$  \\
Phising & $1353$ & $9$\tnote{1}\tnote{\dag}& $840.5 \; (47.2)$ & $20.4 \; (1.8)$ &  $> 14400$ & N/A  \\
Aspirin & $1500$ & $3$ & $230.8 \; (119.1)$ & $1.4 \;(0.6)$ &  $1651.6 \;(894.8)$ & $2.4 \;(1.5)$  \\
Car & $1727$ & $4$ & $1504.6 \;(198.9)$ & $17.4 \;(1.1)$ & $10587.8 \;(1872.9)$ & $18.8 \;(1.6)$  \\
Ethanol & $2000$ & 2\tnote{*} & $210.2 \;(4.1)$ & $1.0$ &  $10330.9 \;(2217.6)$ & $7.0 \;(0.7)$  \\
Wifi & $2000$ & $5$\tnote{\ddag}& $3309.2 \;(702.6)$ & $8.0 \;(1.6)$ & $>14400$ & N/A  \\
Mallat & $2400$ & $3$\tnote{\ddag}& $5934.9 \; (1110.8)$ & $10.6 \;(1.5)$ &  $> 14400$ & N/A  \\
 &  & $4$\tnote{\ddag} & $7820.3 \;(1941.1)$ & $15.2 \;(2.2)$ & $> 14400$ & N/A  \\
Advertising & $3279$ & $2$\tnote{\ddag} & $4318.1\; (622.6)$ & $8.4\; (2.1)$ & $> 14400$ & N/A  \\
 &  & $8$\tnote{1}\tnote{\ddag} &  $7410.7\; (2367.0)$ & $12.0 \; (2.9)$  & $> 14400$ & N/A \\ 
 Rice & $3810$ & $2$\tnote{\ddag} & $8314.7\; (669.2)$ & $11.8\; (0.5)$ & $> 14400$ & N/A  \\ \hline
\end{tabular}

\begin{tablenotes}
\item[1] {\tt k-means++}'s solution is not globally optimal for all five random trials.
\item[\dag] Problem~\ref{lp:pK} with $t=2$ is not tight.
\item[\ddag] Unable to verify whether Problem~\eqref{lp:pK} with $t=2$ is tight or not due to exceeding the time limit.
\item[*] The SDP relaxation, \ie Problem~\eqref{SDP} is tight.
\end{tablenotes}
\end{threeparttable}
\end{table}

In Tables~\ref{tab:smalldata} and~\ref{tab:bigdata}, all instances for which Problem~\eqref{lp:pK} with $t=2$ is not tight or is not solvable within $4$ hours are marked. As we mentioned before, by letting $t > 2$, all such instances are solved to global optimality within the time limit, indicating the importance of selecting $t$ in a dynamic fashion. Recall that by construction, if $K=2$ or if an instance is solved in one iteration, then we have $t=2$.
For the remaining problems, in Table~\ref{tab:distt}, we show the distribution of $t$ associated with all inequalities added to the LP in the course of~\cut. As can be seen from this table, while for the majority of the inequalities we have $t\in \{2,3\}$, for some instances adding inequalities with $t \in \{4,5,6\}$ is beneficial as well.

\begin{table}[htb]
\caption{Distribution of different types of inequalities~\eqref{eq3k} (in percentage) added to the LP relaxation during cut generation with {\tt cuPDLP-C} as the LP solver. Inequalities~\eqref{eq3k} with $t=2$ added during the initialization step are not included.}
\label{tab:distt}
\small
\centering
\begin{tabular}{|ccc|ccccc|}
\hline
Data set & $n$ & $K$ & $t=2$ & $t=3$ & $t=4$ & $t=5$ & $t=6$ \\ \hline
glass & $214$ & $3$ & $13.2$ & $86.8$ & $0.0$ & $0.0$ &
$0.0$\\
 &  & $6$ &  $67.5$ & $32.5$ & $0.0$ &$0.0$  &$0.0$  \\
ecoli & $336$ & $3$ & $80.6$ & $19.4$ & $0.0$ & $0.0$ &  $0.0$\\
Wholesale & $440$ & $6$ & $51.0$ & $41.5$ & $6.2$ & $1.2$ & $0.1$ \\
ECG5000 & $500$ & $5$ & $53.7$ & $46.3$ &$0.0$  &$0.0$  &$0.0$  \\
Energy & $768$ & $12$ & $100.0$ &$0.0$  &$0.0$  & $0.0$ & $0.0$ \\
Gene & $801$ & $5$ & $100.0$ &$0.0$  &$0.0$  & $0.0$ &$0.0$  \\
 &  & $6$  & $43.5$ & $56.5$ & $0.0$ & $0.0$ & $0.0$ \\
Arcene & $900$ & $5$ & $37.4$ & $62.6$ & $0.0$ & $0.0$ &
$0.0$\\
Wafer & $1000$ & $4$ & $46.5$ & $53.5$ &$0.0$  &$0.0$  &
$0.0$\\
Phising & $1353$ & $9$ & $90.5$ & $9.5$ &$0.0$  &$0.0$  & $0.0$ \\
Aspirin & $1500$ & $3$ & $100.0$ & $0.0$ & $0.0$ & $0.0$ & $0.0$ \\
Car & $1727$ & $4$ & $77.9$ & $17.7$ & $4.4$ &$0.0$  &
$0.0$\\
Wifi & $2000$ & $5$ & $37.5$ & $49.2$ & $10.8$ & $2.5$ & 
$0.0$\\
Mallat & $2400$ & $3$ & $32.2$ & $67.8$ & $0.0$ &$0.0$  & $0.0$ \\
 &  & $4$ & $37.2$ & $40.3$ & $22.5$ &$0.0$  &$0.0$  \\
Advertising & $3279$ & $8$ & $84.9$ & $12.1$ & $2.9$ & $0.1$  & $0.0$ \\ \hline
\end{tabular}
\end{table}

We conclude this paper by acknowledging that further work is needed to make~\cut relevant for larger data sets. Of course, as first-order LP solvers become faster and more powerful GPUs become available, one can solve larger instances. Exploring sketching techniques~\cite{mixxie21,abdBan22} for tackling larger data sets is an interesting topic of future research as well. An alternative strategy is to incorporate the proposed LP relaxation within mixed-integer nonlinear programming solvers~\cite{IdaNick18} to expedite their convergence.

\clearpage

\begin{footnotesize} 
\bibliographystyle{plainurl}
\bibliography{biblio}

\begin{thebibliography}{10}

\bibitem{mosek}
{\em MOSEK 9.2}, 2019.
\newblock \url{http://docs.mosek.com/9.0/faq.pdf}.

\bibitem{abdBan22}
P.~Abdalla and A.~S. Bandeira.
\newblock Community detection with a subsampled semidefinite program.
\newblock {\em Sampling Theory, Signal Processing, and Data Analysis}, 20(1):6,
  2022.

\bibitem{Aloise09}
D.~Aloise, A.~Deshpande, P.~Hansen, and P.~Popat.
\newblock {NP}-hardness of {E}uclidean sum-of-squares clustering.
\newblock {\em Machine Learning}, 75:245--248, 2009.

\bibitem{applegate2021practical}
D.~Applegate, M.~D{\'\i}az, O.~Hinder, H.~Lu, M.~Lubin, B.~O'Donoghue, and
  W.~Schudy.
\newblock Practical large-scale linear programming using primal-dual hybrid
  gradient.
\newblock {\em Advances in Neural Information Processing Systems},
  34:20243--20257, 2021.

\bibitem{Awaetal15}
P.~Awasthi, A.~S. Bandeira, M.~Charikar, R.~Krishnaswamy, S.~Villar, and
  R.~Ward.
\newblock Relax, no need to round: Integrality of clustering formulations.
\newblock {\em Proceedings of the 2015 Conference on Innovations in Theoretical
  Computer Science}, 165:191--200, 2015.

\bibitem{neos98}
J.~Czyzyk, M.~P. Mesnier, and J.~J. More.
\newblock The {NEOS} server.
\newblock {\em IEEE Journal on Computational Science and Engineering}, 5(3):68
  --75, 1998.

\bibitem{openmp}
L.~Dagum and R.~Menon.
\newblock {OpenMP}: an industry standard api for shared-memory programming.
\newblock {\em IEEE computational science and engineering}, 5(1):46--55, 1998.

\bibitem{AntoAida20}
A.~De~Rosa and A.~Khajavirad.
\newblock The ratio-cut polytope and {K}-means clustering.
\newblock {\em SIAM Journal on Optimization}, 32(1):173--203, 2022.

\bibitem{dua2017uci}
D.~Dua, C.~Graff, et~al.
\newblock {UCI} machine learning repository.
\newblock {\em URL http://archive. ics. uci. edu/ml}, 2017.

\bibitem{FriRezSal19}
Z.~Friggstad, M.~Rezapour, and Salavatipour~M. R.
\newblock Local search yields a {PTAS} for {K}-means in doubling metrics.
\newblock {\em SIAM Journal on Computing}, 48(2):452–480, 2019.

\bibitem{gurobi}
{Gurobi Optimization, LLC}.
\newblock {Gurobi Optimizer Reference Manual}, 2021.
\newblock URL: \url{https://www.gurobi.com}.

\bibitem{IduMixPetVil17}
T.~Iguchi, D.~G. Mixon, J.~Peterson, and S.~Villar.
\newblock Probably certifiably correct {K}-means clustering.
\newblock {\em Mathematical Programming}, 165:605--642, 2017.

\bibitem{Kan02}
T.~Kanungo, D.M. Mount, N.S. Netanyahu, C.D. Piatko, R.~Silverman, and A.Y. Wu.
\newblock A local search approximation algorithm for {K}-means clustering.
\newblock {\em Proceedings of the 18th Annual ACM Symposium on Computational
  Geometry}, pages 10--18, 2002.

\bibitem{IdaNick18}
A.~Khajavirad and N.~V. Sahinidis.
\newblock A hybrid {LP/NLP} paradigm for global optimization relaxations.
\newblock {\em Mathematical Programming Computation}, 10:383--421, 2018.
\newblock \href {https://doi.org/10.1007/s12532-018-0138-5}
  {\path{doi:10.1007/s12532-018-0138-5}}.

\bibitem{LiLiLi20}
X.~Li, Y.~Li, S.~Ling, T.~Strohmer, and K.~Wei.
\newblock When do birds of a feather flock together? {K}-means, proximity, and
  conic programming.
\newblock {\em Mathematical Programming}, 179:295--341, 2020.

\bibitem{Lloyd82}
S.~Lloyd.
\newblock Least squares quantization in {PCM}.
\newblock {\em IEEE Transactions on Information Theory}, 28(2):129 --137, 1982.

\bibitem{lu2023cupdlp}
H.~Lu and J.~Yang.
\newblock {cuPDLP.jl}: A {GPU} implementation of restarted primal-dual hybrid
  gradient for linear programming in julia.
\newblock {\em arXiv preprint arXiv:2311.12180}, 2023.

\bibitem{lu2023cupdlpC}
H.~Lu, J.~Yang, H.~Hu, Q.~Huangfu, J.~Liu, T.~Liu, Y.~Ye, C.~Zhang, and D.~Ge.
\newblock {cuPDLP-C}: A strengthened implementation of cupdlp for linear
  programming by c language.
\newblock {\em arXiv preprint arXiv:2312.14832}, 2023.

\bibitem{MahNimVar09}
M.~Mahajan, P.~Nimbhorkar, and K.~Varadarajan.
\newblock The planar {K}-means problem is {NP}-hard.
\newblock In {\em WALCOM: Algorithms and Computation}, pages 274--285. Springer
  Berlin Heidelberg, 2009.

\bibitem{Man79}
O.L. Mangasarian.
\newblock {Uniqueness of solution in linear programming}.
\newblock {\em Linear Algebra and its Applications}, 25:151--162, 1979.

\bibitem{marzi2019computational}
F.~Marzi, F.~Rossi, and S.~Smriglio.
\newblock Computational study of separation algorithms for clique inequalities.
\newblock {\em Soft Computing}, 23(9):3013--3027, 2019.

\bibitem{MixVilWar17}
D.~G. Mixon, S.~Villar, and R.~Ward.
\newblock Clustering subgaussian mixtures by semidefinite programming.
\newblock {\em Information and Inference: A Journal of the IMA}, 6(4):389--415,
  2017.

\bibitem{mixxie21}
D.~G. Mixon and K.~Xie.
\newblock Sketching semidefinite programs for faster clustering.
\newblock {\em IEEE Transactions on Information Theory}, 67(10):6832--6840,
  2021.

\bibitem{neumaier2004safe}
A.~Neumaier and O.~Shcherbina.
\newblock Safe bounds in linear and mixed-integer linear programming.
\newblock {\em Mathematical Programming}, 99:283--296, 2004.

\bibitem{Pad89}
M.~Padberg.
\newblock The boolean quadric polytope: Some characteristics, facets and
  relatives.
\newblock {\em Mathematical Programming}, 45:139--172, 1989.

\bibitem{PenWei07}
J~Peng and Y~Wei.
\newblock Approximating {K}-means-type clustering via semidefinite programming.
\newblock {\em SIAM Journal on Optimization}, 18(1):186--205, 2007.

\bibitem{PenXia05}
J.~Peng and Y.~Xia.
\newblock {\em A New Theoretical Framework for {K}-Means-Type Clustering},
  pages 79--96.
\newblock Springer Berlin Heidelberg, 2005.

\bibitem{PicSudWie22}
V.~Piccialli, A.~M. Sudoso, and A.~Wiegele.
\newblock {SOS-SDP}: An exact solver for minimum sum-of-squares clustering.
\newblock {\em INFORMS Journal on Computing}, 34(4):2144--2162, 2022.

\bibitem{PraHan18}
M.~N. Prasad and G.~A. Hanasusanto.
\newblock Improved conic reformulations for {K}-means clustering.
\newblock {\em SIAM Journal on Optimization}, 28(4):3105--3126, 2018.

\bibitem{VerBookHDP}
R.~Vershynin.
\newblock {\em High-dimensional probability: An introduction with applications
  in data science}, volume~47.
\newblock Cambridge University Press, 2018.

\end{thebibliography}
\end{footnotesize}

\clearpage

\section{Appendix: Proof of Lemma~\ref{MasterOfProbability0}}
\label{sec:appendix}

\begin{proof}
We first prove the following:
\begin{claim}\label{claim1}
    Inequalities~\eqref{22221}-\eqref{22220} can be equivalently written as:
\begin{equation}\label{0000}
\max_{x,y \in \partial \B_1}r\dashint_{\partial \B_2} \max\{x^Tz, y^Tz\} d\H^{m-1}(z) -x^Ty< \frac {r}2\Delta^2,\quad \mbox{for $r=r_1,r_2$}.
\end{equation}
\end{claim}
\begin{cpf}
As before, for notational simplicity, we denote the $i$th (resp. $j$th, $k$th) point by $x$ (resp. $y$, $z$). 
By \eqref{0} and \eqref{p}, inequalities~\eqref{22221}-\eqref{22220} read respectively
$$
\min_{x,y \in \partial \B_2}r_1\dashint_{\partial \B_1} \min\{\|x-z\|^2, \|y-z\|^2\} d\H^{m-1}(z) - \|x-y\|^2>   2(r_1-1),
$$
$$
\min_{x,y \in \partial \B_1}r_2\dashint_{\partial \B_2} \min\{\|x-z\|^2, \|y-z\|^2\} d\H^{m-1}(z) - \|x-y\|^2>   2(r_2-1).
$$
The first inequality, up to a change of variable, reads:
$$
\min_{x,y \in \partial \B_1}r_1\dashint_{\partial \B_2} \min\{\|x-z\|^2, \|y-z\|^2\} d\H^{m-1}(z) - \|x-y\|^2>   2(r_1-1),
$$
hence inequalities~\eqref{22221}-\eqref{22220} read
$$
\min_{x,y \in \partial \B_1}r\dashint_{\partial \B_2} \min\{\|x-z\|^2, \|y-z\|^2\} d\H^{m-1}(z) - \|x-y\|^2>   2(r-1),\quad \mbox{for $r=r_1,r_2$}
$$
which, expanding the squares, gives
\begin{equation}\label{000}
\min_{x,y \in \partial \B_1}r\dashint_{\partial \B_2}\|z\|^2+ 1+\min\{-2x^Tz, -2y^Tz\} d\H^{m-1}(z) +2x^Ty -2>2(r-1),\quad \mbox{for $r=r_1,r_2$}.
\end{equation}
Via a change of variables, we have
\begin{equation*}
\begin{split}
\dashint_{\partial \B_2}\|z\|^2&d\H^{m-1}(z) = \dashint_{\partial \B_1}\|\Delta e_1 +z\|^2d\H^{m-1}(z)\\
&=\dashint_{\partial \B_1}\|\Delta e_1\|^2+\|z\|^2+2\Delta e_1^Tzd\H^{m-1}(z)=\Delta^2+1.
\end{split}
\end{equation*}
Hence \eqref{000} reads
\begin{equation*}
\min_{x,y \in \partial \B_1}r\dashint_{\partial \B_2} \min\{-2x^Tz, -2y^Tz\} d\H^{m-1}(z) +2x^Ty> - r\Delta^2,\quad \mbox{for $r=r_1,r_2$},
\end{equation*}
which is equivalent to \eqref{0000}.
\end{cpf}
Thanks to Claim \ref{claim1}, to conclude the proof of the lemma it suffices to show the following:
\begin{claim}\label{l:condition}
Inequalities \eqref{0000} hold if and only if  $\Delta > \Delta_0$.
\end{claim}
\begin{cpf}
We will prove that the maximum of the left-hand side of inequalities~\eqref{0000} over all $x, y \in \partial \B_1$ is attained at $(e_1,-e_1)$, for all $r\in (0,2)$.
Since $r_1\in (0,1]$ and  $r_2\in [1,2)$, this in turn implies that inequalities~\eqref{0000} are satisfied if and only if
$$
r\Delta+1=r\dashint_{\partial \B_2} z^Te_1 d\H^{m-1}(z) +1< \frac {r}2\Delta^2,\quad \mbox{for $r=r_1,r_2$},
$$
which, by $r_1\leq r_2$, is true if and only if $\Delta > \Delta_0$; \ie the desired condition.
Define
\begin{equation}\label{defrr}
F_r(x,y):=r\dashint_{\partial \B_2} \max\{x^Tz, y^Tz\} d\H^{m-1}(z) -x^Ty.    
\end{equation}
Our goal is to show that for every $r\in (0,2)$
\begin{equation}\label{p10}
 \max_{x,y\in \partial \B_1} F_r(x,y) = F_r(e_1,-e_1)=r\Delta+1.
\end{equation}
In Lemma \ref{l:inter}, we prove that
\begin{equation}\label{claim}
 \max_{x,y\in \partial \B_1} F_2(x,y) = F_2(e_1,-e_1)=2\Delta+1.
\end{equation}
Hence, recalling that $r\in (0,2)$ and that $1-\frac r2\geq -(1-\frac r2)x^Ty$ for every $x,y\in \partial \B_1$, we conclude with the following chain of inequalities:
\begin{equation}
    \begin{split}
r\Delta+1&=\frac r2(2\Delta+1)+(1-\frac r2)=\frac r2\max_{x,y\in \partial \B_1} F_2(x,y)+(1-\frac r2)\\
&= \max_{x,y\in \partial \B_1} \frac r2F_2(x,y)+(1-\frac r2)\geq \max_{x,y\in \partial \B_1} F_r(x,y).
        \end{split}
\end{equation}

\end{cpf}
\end{proof}

In order to prove Lemma \ref{MasterOfProbability0}, we made use of the next lemma, for which we provide a proof that is closely related to the proof of Lemma~1 in~\cite{AntoAida20}, in which the authors prove
$$
 \max_{x,y\in \B_1} F_1(x,y) = F_1(e_1,-e_1)=\Delta+1,
$$
where $F_r$ is defined by~\eqref{defrr} and $\B_1$ denotes a ball of radius one as defined in the SBM. For brevity, in the following, we only include the parts of the proof that are different, and when possible, we refer to the relevant parts of the proof of Lemma~1 in~\cite{AntoAida20}.  

\begin{lemma}\label{l:inter}
    \eqref{claim} holds.
\end{lemma}
\begin{proof}
For any $x\in \R^m$, we denote by $x_i$ the $i$th component of $x$. 
We divide the proof in several steps:

\smallskip

\noindent
{\em {\bf Step 1}. Slicing:

Let $z, w$ be any pair of points in $\partial \B_2$ satisfying $z_1=w_1$, $z_2=-w_2\geq 0$,  $z_j=w_j=0$ for all $j \in \{3,\ldots,m\}$. In the special case $m=1$, we consider $z=w\in \partial \B_2$.
 Define
$$
H(x,y):= \max\{x^Tz,y^Tz\} +  \max\{x^Tw,y^Tw\} -x^Ty .
$$
Then \eqref{claim} holds if the following holds 
\begin{equation}\label{p20}
\max_{x,y\in \partial \B_1}H(x,y)=H(e_1,-e_1)
\end{equation}
for any pair of points $z, w\in\partial \B_2$ satisfying $z_1=w_1$, $z_2=-w_2\geq 0$,  $z_j=w_j=0$ for all $j \in \{3,\ldots,m\}$ in case $m\geq 2$, or, in case $m=1$, for any point $z=w\in \partial \B_2$ .
}

{\em Proof of Step 1.} Since
$$F_2(x,y)=\frac 1{\H^{m-1}(\partial B_2)}\int_{\Delta-1}^{\Delta+1} \int_{\{z_1=s\}\cap \partial B_2}2\max\{x^Tz,y^Tz\} -x^Ty d\H^{m-2}(z)ds,$$
to show \eqref{claim} it is enough to show that the function
$$G(x,y):= \int_{\{z_1=s\}\cap \partial B_2}2\max\{x^Tz,y^Tz\} -x^Ty d\H^{m-2}(z)$$
is maximized in $x=e_1$, $y=-e_1$, for every $s \in [\Delta-1,\Delta+1].$
Denoting $A:=\{z_1=s,\, z_2\geq 0\}\cap \partial B_2$, then
$$\frac 12 G(x,y)= \int_{A}\max\{x^Tz,y^Tz\}  + \max\{x^T(2se_1-z),y^T(2se_1-z)\} -x^Ty d\H^{m-1}(z).$$
Hence, it is enough to prove that for every $s \in [\Delta-1,\Delta+1]$ and for every $z\in A$,
\begin{equation}\label{p30}
\max_{x,y\in \partial B_1}\left \{ \max\{x^Tz,y^Tz\} +  \max\{x^T(2se_1-z),y^T(2se_1-z)\} -x^Ty \right\},
 \end{equation}
 is achieved at $(e_1,-e_1)$.
 Since Problem \eqref{p30} is invariant under a rotation of the space around the axis generated by $e_1$,  we conclude that solving Problem \eqref{p30} is equivalent to solving Problem \eqref{p20}.

\noindent
{\em {\bf Step 2}. Symmetric distribution of the maxima: 

Let $z,w$ be any pair of points as defined in Step 1.
Define
$$I(x,y):= x^Tz + y^Tw -x^Ty.$$
In order to show that~\eqref{p20} holds, it suffices to prove that for $m\geq 2$
\begin{equation}\label{p40}
\mathop{\max_{x,y\in \partial B_1}}_{x^Tz\geq y^Tz,\, y^Tw \geq x^Tw }\{I(x,y) \} \leq H(e_1,-e_1)=2z_1+1.
\end{equation}
}
{\em Proof of Step 2.}  The proof is identical (up to trivial changes) to the proof of Step 2 of Lemma 1 in \cite{AntoAida20}.

\noindent
{\em {\bf Step 3}. Reduction from spheres to circles:

To show the validity of~\eqref{p40}, we can restrict to dimension $m=2$.}

{\em Proof of Step 3.}  The proof repeats verbatim as in the proof of Step 3 of Lemma 1 in \cite{AntoAida20}.

\noindent
{\em {\bf Step 4}. Symmetric local maxima:

For any pair $x,y\in \partial \B_1$ of the form $x_1 = y_1$ and $x_2 = -y_2$, we have
$$I(x,y)\leq H(e_1,-e_1).$$}
{\em Proof of Step 4.}
Given such symmetric pair $(x,y)$, the objective function evaluates to
$I(x,y)= 2z_1 x_1 + 2z_2 x_2 -x_1^2 + x_2^2$.  Using $x^2_1 + x^2_2 = 1$ and  $z_2 \sqrt{1-x^2_1} \leq z_2$, it suffices to show that
\begin{equation}\label{eqA1}
2z_2 \leq  2 x_1^2 - 2z_1 x_1 + 2z_1, \quad \forall  x_1 \in [-1,1].
\end{equation}
Since the function $\hat f(x_1):=2 x_1^2 - 2z_1 x_1 + 2z_1$ on the right hand side of~\eqref{eqA1} is a convex parabola in $x_1$,
its minimum is either attained at one of the end points or at $\tilde x_1 = \frac{z_1}{2}$, provided
that $-1 \leq \frac{z_1}{2} \leq 1$. Since $\Delta -1 \leq z_1 \leq \Delta + 1$, the point $\tilde x_1$ lies in the domain only if
$\Delta-1 \leq z_1 \leq \min\{2, \Delta + 1\}=2$.
The value of $\hat f$ at $x_1 = -1$ and $x_1 = 1$ evaluates to $2 + 4 z_1$ and $2$, respectively,
both of which are bigger than $2z_2$. Hence it remains to show that $\hat f(\tilde x_1)\geq 2z_2$ if $\Delta-1 \leq z_1 \leq 2$, that is we have to show that
$$
2\sqrt{1- u^2} \leq 2(u + \Delta) - \frac{(u+\Delta)^2}{2}, \quad \mbox{if} \quad -1 \leq u \leq 2-\Delta,
$$
where we set $u := z_1 - \Delta$ and we use that $z_2= \sqrt{1-u^2}$. Since $u + \Delta \leq 2$, the right hand side of the above inequality is increasing in $\Delta$;
hence it suffices to show its validity at $\Delta = 2$; \ie
$$
2\sqrt{1- u^2} \leq 2(u + 2) - \frac{(u + 2)^2}{2}, \quad \mbox{if} \quad  -1 \leq u \leq 0,
$$
which can be easily proved.

\noindent
{\em {\bf Step 5}. Decomposition of the circle:

To solve Problem~\eqref{p40}, it suffices to solve
\begin{equation}\label{p6}
\mathop{\max_{x,y\in  \partial \B_1\cap \{x_1\leq 0,\, y_1 \geq 0\}}}_{x^Tz\geq y^Tz, \, y^Tw \geq x^Tw }I(x,y) \leq H(e_1,-e_1)=2z_1+1,
\end{equation}}
for every $z\in \partial \B_2$, $z_1=w_1$, $z_2=-w_2\geq 0$.

{\em Proof of Step 5.}  The proof is identical (up to trivial changes) to the proof of Step 6 of Lemma 1 in \cite{AntoAida20}.

\noindent
{\em {\bf Step 6}. We solve Problem \eqref{p6}.}

{\em Proof of Step 6.}  The proof is identical (up to trivial changes) to the proof of Step 7 of Lemma 1 in \cite{AntoAida20}.

\end{proof}

\end{document}